\newtheorem{theorem}{Theorem}[section]
\newtheorem{lemma}[theorem]{Lemma}
\newtheorem{proposition}[theorem]{Proposition}
\newtheorem{corollary}[theorem]{Corollary}
\theoremstyle{definition}
\newtheorem{definition}[theorem]{Definition}
\theoremstyle{remark}
\newtheorem{remark}[theorem]{Remark}
\numberwithin{equation}{section}
\numberwithin{equation}{section}
\newsavebox{\savepar}
\begin{document}
	
	\title{Multiplicity and H\"{o}lder regularity of solutions for a nonlocal elliptic PDE involving singularity}
	\author{Kamel Saoudi, Sekhar Ghosh \&  Debajyoti Choudhuri\footnote{Corresponding
			author: dc.iit12@gmail.com} \\
		\small{Department of Mathematics, National Institute of Technology Rourkela, India}\\
		\small{Department of Mathematics, Imam Abdul Rahman Bin Faisal University,}\\ \small{Dammam, Saudi Arabia}\\
		\small{Emails: sekharghosh1234@gmail.com \& dc.iit12@gmail.com}}
	\date{}
	\maketitle
	
	\begin{abstract}
		\noindent In this paper we prove the existence of multiple solutions for a nonlinear nonlocal elliptic PDE involving a singularity which is given as
		\begin{eqnarray}
		(-\Delta_p)^s u&=& \frac{\lambda}{u^\gamma}+u^q~\text{in}~\Omega,\nonumber\\
		u&=&0~\text{in}~\mathbb{R}^N\setminus\Omega,\nonumber\\
		u&>& 0~\text{in}~\Omega\nonumber,
		\end{eqnarray}
		where $\Omega$ is an open bounded domain in $\mathbb{R}^N$ with smooth boundary, $N>ps$, $s\in (0,1)$, $\lambda>0$, $0<\gamma<1$, $1<p<\infty$, $p-1<q\leq p_s^{*}=\frac{Np}{N-ps}$. We employ variational techniques to show the existence of multiple positive weak solutions of the above problem. We also prove that for some $\beta\in (0,1)$, the weak solution to the problem is in $C^{1,\beta}(\overline{\Omega})$.\\
		{\bf keywords}: Elliptic PDE, PS condition, Mountain Pass theorem, G\^{a}teaux derivative.\\
		{\bf AMS classification}:~35J35, 35J60.
	\end{abstract}
	\section{Introduction}
	\noindent The study of elliptic PDEs involving fractional $p$-Laplacian operator plays an important role in many field of sciences, like in the field of finance, optimization, electromagnetism, astronomy, water waves, fluid dynamics, probability theory, phase transitions etc. The application to L\'{e}vy processes in probability theory can be seen in \cite{bertoin1996levy, bojdecki1999fractional} and that in finance can be seen in \cite{cont2004financial}. For further details on applications, one can refer to \cite{valdinoci2009long} and the references therein.\\
	\noindent In the recent past, a vast investigation has been done for the following local problem.
	\begin{eqnarray}\label{refer local}
	-\Delta_p u&=& \frac{\lambda a(x)}{u^\gamma}+Mu^q~\text{in}~\Omega,\nonumber\\
	u&=&0~\text{in}~\partial\Omega,\\
	u&>& 0~\text{in}~\Omega\nonumber,
	\end{eqnarray}
	where $1<p<N$, $M\geq0$, $a:\Omega\rightarrow\mathbb{R}$ is a nonnegative bounded function.
	For $M=0$, the existence of weak solutions and regularity of solutions for singular problem as in (\ref{refer local}) has been widely studied in \cite{boccardo2010semilinear, crandall1977dirichlet, diaz1987elliptic, lazer1991singular, rosen1971minimum} and the references therein.
	Recently, for $M\neq0$ the problem (\ref{refer local}) has been studied to show the existence of multiple solutions in \cite{adimurthi2006multiplicity, ambrosetti1994combined, bal2017multiplicity, coclite1989singular, crandall1977dirichlet, dhanya2012global, giacomoni2009multiplicity, giacomoni2007sobolev, giacomoni2007multiplicity, haitao2003multiplicity, hirano2004existence} and the references therein. In most of these references, the multiplicity results were proved by the variational methods, Nehari manifold method, perron method, concentration compactness and the moving hyperplane method.\\ 
	For $p=2$, $a(x)\equiv 1$ and $M=1$, Haitao \cite{haitao2003multiplicity} used the variational method to show that for $0<\lambda<\Lambda<\infty$, the problem (\ref{refer local}) has two solutions. For $a(x)\equiv 1$ and $M=1$ Giacomoni \& Sreenadh \cite{giacomoni2007multiplicity} had studied the problem (\ref{refer local}) for $1<p<\infty$, to show the existence of atleast two solutions by using shooting method. Later Giacomoni et al. \cite{giacomoni2007sobolev} had proved the multiplicity result using the variational method. In \cite{bal2017multiplicity} the authors showed the multiplicity of solutions using the moving hyperplane method. In \cite{dhanya2012global} the authors applied the concentration compactness method to establish the multiplicity results. The Nehari manifold method is used in \cite{yijing2001combined} to show the existence of multiple solutions.\\
	\noindent Recently, the study of nonlocal elliptic PDEs involving singularity with Dirichlet boundary condition has drawn interest by many researchers. The investigation for the existence of weak solutions for a nonlocal elliptic pdes with concave-convex type nonlinearity, i.e. $u^p+\lambda u^q$, for $p, q>0$ has been extensively studied in \cite{barrios2015critical, bisci2015brezis, bisci2015lower, brandle2013concave, choudhuri2015existence, wei2015multiplicity} and the references therein. The existence results for the  Brezis-Nirenberg type problem has been studied in \cite{bisci2015brezis, mosconi2016brezis, servadei2013brezis}. The eigenvalue problem for fractional $p$-Laplacian and the properties of first and second eigen values can be found in \cite{brasco2016second, franzina2013fractional, lindgren2014fractional}.\\
	\noindent The following nonlocal problem has been studied by many authors,
	\begin{eqnarray}\label{refer nonlocal}
	(-\Delta_p)^s u&=& \frac{\lambda a(x)}{u^\gamma}+Mf(x,u)~\text{in}~\Omega,\nonumber\\
	u&=&0~\text{in}~\mathbb{R}^N\setminus\Omega,\\
	u&>& 0~\text{in}~\Omega\nonumber,
	\end{eqnarray}
	where $N>ps$, $M\geq0$, $a:\Omega\rightarrow\mathbb{R}$ is a nonnegative bounded function.
	For $p=2$, $M=0$, $\lambda=1$ and $a(x)\equiv1$ the problem (\ref{refer nonlocal}) was studied by  \cite{fang2014existence}. In \cite{fang2014existence} the author had shown that for $0<\gamma<1$, the problem \eqref{refer nonlocal} has a unique solution $u\in C^{2,\alpha}(\Omega)$ for $0<\alpha<1$. For $1<p<\infty$, $M=0$ and $\lambda=1$ the problem (\ref{refer nonlocal}) was studied by Canino et al. \cite{canino2017nonlocal}. For $0<\gamma<1$ and $\lambda=1$, Ghanmi \& Saoudi \cite{ghanmi2016nehari} established the existence of two solutions by Nehari manifold method for fractional Laplacian operator. For $p=2$, $f(x,u)=u^{2_s^*-1}$, Mukherjee \& Sreenadh \cite{mukherjee2016fractional} studied the problem (\ref{refer nonlocal}) by variational method. Ghanmi \& Saoudi \cite{ghanmi2016multiplicity} guaranteed the existence of multiple weak solutions to the problem (\ref{refer nonlocal}), for $0<\gamma<1$ and $1<p-1<q\leq p_s^*$. The authors in \cite{ghanmi2016multiplicity} have used the Nehari manifold method.\\
	Besides the existence and multiplicity of solutions, the regularity of these solutions for PDEs involving a nonlocal elliptic operator with a power nonlinearity has been studied by Iannizzotto et al \cite{iannizzotto2015hs}, K. Saoudi \cite{saoudi2017vs}. For the results on the local operator with a power nonlinearity refer to \cite{brezis1993h1, saoudi2014p(x)} and the references therein. The regularity of solution to the PDE involving a local elliptic operator, viz.  Laplacian, p-Laplacian, involving singularity and a power nonlinearity has been studied by \cite{giacomoni2010w01} and \cite{giacomoni2007sobolev} respectively. However, to our knowledge there are no investigation on regularity of solutions to a PDE invovling a nonlocal elliptic operator with a singularity and a power nonlinearity.
	\noindent In the present article we will prove the existence of multiple solutions and its regularity for the following nonlocal problem
	\begin{eqnarray}\label{main}
	(-\Delta_p)^s u&=& \frac{\lambda}{u^\gamma}+u^q~\text{in}~\Omega,\nonumber\\
	u&=&0~\text{in}~\mathbb{R}^N\setminus\Omega,\\
	u&>& 0~\text{in}~\Omega\nonumber,
	\end{eqnarray}
	where $\Omega$ is a open bounded domain of $\mathbb{R}^N$ with smooth boundary, $N>ps$, $s\in (0,1)$, $\lambda>0$, $0<\gamma<1$, $1<p<\infty$, $p-1<q\leq p_s^{*}=\frac{Np}{N-ps}$ and $(-\Delta_p)^s$ is the fractional $p$-Laplacian operator which is defined as
	$$(-\Delta_p)^s u(x)=C_{N, s} P.V. \int_{\mathbb{R}^N}\cfrac{|u(x)-u(y)|^{p-2}(u(x)-u(y))}{|x-y|^{N+ps}}dy~,\forall~p\in[1,\infty)$$ with $C_{N, s},$ being the normalizing constant.\\
	\noindent Similar problems to that in (\ref{main}) has been studied by a few authors like Mukherjee \& Sreenadh \cite{mukherjee2016dirichlet}, Saoudi \cite{saoudi2017critical}. In \cite{mukherjee2016dirichlet}, the authors established the existence of multiple solutions by using the Nehari manifold method. In \cite{saoudi2017critical}, for $p=2$ the multiplicity result for the problem (\ref{main}) is proved  with the help of the variational method, where the author proved the existence result by converting the nonlocal problem to a local problem.\\
	\noindent In this article we show the existence of multiple solutions to the nonlocal problem (\ref{main}) by combining some variational techniques developed in \cite{ambrosetti1973dual}. We first show the existence of a weak solution using sub-super solution method. To show the existence of a second solution, we use a modified version of the Mountain Pass lemma due to Ambrosetti \& Rabinowitz \cite{ambrosetti1973dual}, which can be found in Ghoussoub \& Preiss \cite{ghoussoub1989general}. We further proved that the weak solution to (\ref{main}) is in $C^{1,\beta}(\overline{\Omega})$ for some $\beta\in (0,1)$.\\
	\noindent The article is organised in the following sequence. 
	In Section 2 we give the mathematical formulation with the appropriate functional analytic setup. Section 3 is devoted to establish the existence of a weak solution. In Section 4 we prove the multiplicity of solutions using the Ekeland's variational principle.\\
	The main results proved in this manuscript are the followings\\
	\begin{theorem}\label{main thm}
		There exists $\Lambda\in(0,\infty)$ such that,
		\begin{itemize}
			\item[(i)] $\forall ~\lambda\in(0, \Lambda)$, the problem (\ref{main}) has a minimal solution.
			\item[(ii)] For $\lambda=\Lambda$ the problem (\ref{main}) has atleast one solution.
			\item[(iii)] $\forall ~\lambda\in(\Lambda, \infty)$ the problem (\ref{main}) has no solution.
		\end{itemize}
	\end{theorem}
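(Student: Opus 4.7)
The plan is to define
\[
\Lambda \;=\; \sup\{\lambda>0 : (\ref{main}) \text{ admits a weak solution}\},
\]
and then verify that $\Lambda$ is finite, positive, and that the three claimed properties hold. The first task is to show the supremum set is nonempty, which yields $\Lambda>0$. For this I would build a subsolution $\underline u_\lambda$ and a supersolution $\overline u_\lambda$ for $\lambda$ small. A natural subsolution is $\underline u_\lambda = t \varphi_1$ (with $\varphi_1$ a positive first eigenfunction of $(-\Delta_p)^s$ with homogeneous Dirichlet data and $t$ small), using the fact that near the boundary $\lambda/u^\gamma$ dominates the equation while in the interior $u^q$ is small. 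A supersolution is obtained by taking $\overline u_\lambda = Me$, where $e$ solves the purely singular problem $(-\Delta_p)^s e = e^{-\gamma}$ in $\Omega$, $e = 0$ in $\mathbb{R}^N\setminus\Omega$, whose existence and boundary behaviour are available from the prior literature cited above (Canino et al., Ghanmi--Saoudi). For $\lambda$ small and $M$ large one checks $(-\Delta_p)^s\overline u_\lambda \ge \lambda \overline u_\lambda^{-\gamma} + \overline u_\lambda^{q}$. A standard sub-super solution iteration in the fractional $p$-Laplacian setting then produces a weak solution in between, so $(0,\lambda_0) \subset S$ for some $\lambda_0>0$.

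Next, to show $\Lambda<\infty$ (part (iii)), I would use a first-eigenfunction test. Observe that for any $u>0$ the function $t\mapsto \lambda t^{-\gamma}+t^q-\mu t^{p-1}$ attains a positive minimum for every $\mu>0$ provided $\lambda$ is sufficiently large; optimizing gives
\[
\lambda t^{-\gamma}+t^{q} \;\ge\; c_{\gamma,q}\,\lambda^{\frac{q-p+1}{q+\gamma}}\, t^{p-1}\qquad\text{for all } t>0,
\]
so if $\lambda$ is so large that $c_{\gamma,q}\lambda^{(q-p+1)/(q+\gamma)} > \lambda_{1,p,s}$, the first eigenvalue of $(-\Delta_p)^s$ on $\Omega$, then pairing a putative solution $u$ against $\varphi_1$ (and using the weak formulation together with the variational characterisation of $\lambda_{1,p,s}$) produces a contradiction. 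This shows $\Lambda<\infty$.

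For part (i), I would prove the monotonicity $(0,\Lambda)\subseteq S$ and construct the minimal solution simultaneously. If $\lambda<\mu\in S$ with solution $u_\mu$, then $u_\mu$ is a supersolution for the $\lambda$-problem while $t\varphi_1$ is a subsolution for $t$ small. The sub-super scheme together with a monotone iteration starting from $\underline u_\lambda$ produces a weak solution $\underline u\le u\le u_\mu$ which is minimal: any other solution $v$ dominates $\underline u_\lambda$ (by a weak comparison principle adapted to the nonlocal singular setting, exploiting that $t\mapsto \lambda t^{-\gamma}+t^{q}$ is decreasing then increasing but comparison holds for small subsolutions) and the iteration converges pointwise monotonically to the minimal element.

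The delicate part is (ii), existence at $\lambda=\Lambda$. I would pick $\lambda_n\uparrow\Lambda$ and let $u_n$ denote the corresponding minimal solutions; by construction $\{u_n\}$ is monotone increasing. The main obstacle is to establish a uniform a priori bound in the fractional Sobolev space $W_0^{s,p}(\Omega)$: testing the equation with $u_n$ itself is delicate because of the singular term, so I would split $\Omega$ into $\{u_n\le 1\}$ and $\{u_n>1\}$, use the Hardy-type inequality for the fractional $p$-Laplacian to control $\int \lambda_n u_n^{1-\gamma}$, and combine with the bound coming from the subcritical/critical growth of $u^q$ (in the critical case $q=p_s^*$ an extra concentration compactness argument is needed, or one works with the subcritical truncations and passes to the limit). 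Once uniform bounds are secured, weak convergence $u_n \rightharpoonup u_\Lambda$ in $W_0^{s,p}(\Omega)$ together with monotone convergence on the singular term and a Brezis--Lieb type argument on the power term yield that $u_\Lambda$ is a weak solution at $\lambda=\Lambda$; positivity follows from $u_\Lambda \ge u_{n_0} > 0$ for any fixed $n_0$. This uniform bound, especially in the critical exponent case, is the step I expect to be the principal technical hurdle.
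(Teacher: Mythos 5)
Your overall skeleton matches the paper's: define $\Lambda$ as the threshold, show $0<\Lambda<\infty$, use sub-super solutions and monotone iteration for the minimal solution on $(0,\Lambda)$, test against the first eigenfunction for finiteness, and take a monotone sequence $\lambda_n\uparrow\Lambda$ with a uniform a priori bound for part (ii). The one genuinely different piece is how you establish $\Lambda>0$: the paper (Lemma \ref{lambda finite}) introduces a truncated nonlinearity $\overline{f}$, shows the associated cut-off functional $\overline{I}_\lambda$ is $C^1$ and weakly lower semicontinuous, and obtains a nontrivial minimizer in a small ball by a Brezis--Lieb splitting, then invokes the comparison principle; you instead go directly for a sub/supersolution pair ($t\varphi_1$ below, a scaled copy of the solution $e$ to the purely singular problem above) and iterate. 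Your route is more elementary and bypasses the Brezis--Lieb argument, at the cost of needing the boundary behaviour of $e$ from \cite{canino2017nonlocal}; both are standard and both work.

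Two points need repair. First, the parameter sign in your supersolution is reversed. With $\overline u = Me$ and $(-\Delta_p)^s e = e^{-\gamma}$, the supersolution inequality reads
\[
M^{p-1}e^{-\gamma}\;\geq\;\lambda M^{-\gamma}e^{-\gamma}+M^{q}e^{q},
\]
i.e. $1\geq \lambda M^{-(\gamma+p-1)}+M^{q-p+1}e^{q+\gamma}$. Since $q>p-1$, the term $M^{q-p+1}e^{q+\gamma}$ blows up as $M\to\infty$, so $M$ large \emph{cannot} work; you need $M$ small enough that $M^{q-p+1}\|e\|_\infty^{q+\gamma}\leq 1/2$, and then $\lambda$ small enough that $\lambda M^{-(\gamma+p-1)}\leq 1/2$. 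The subsolution $t\varphi_1$ with $t$ chosen small (relative to $M$, so that $t\varphi_1\leq Me$, both comparable to $d(x,\partial\Omega)^s$) is fine. Second, the a priori bound at $\lambda=\Lambda$ is simpler than you anticipate: because $0<\gamma<1$, testing with $u_n$ produces $\int_\Omega u_n^{1-\gamma}\,dx$, which is a \emph{positive} power of $u_n$ and is controlled directly by H\"older and the Sobolev embedding; no Hardy-type inequality or splitting of $\Omega$ is needed. The paper's Theorem \ref{solnLambda} just combines this energy identity with the bound $I(u_n)\leq B$ (inherited from the minimal solutions being local minimizers of the cut-off functional) to get $\|u_n\|_{X_0}^{p-1+\gamma}\leq A_1+A_2\|u_n\|_{X_0}^{-(1-\gamma)}$, hence uniform boundedness.
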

	
	\begin{theorem}\label{multiplicity thm}
		For every $\lambda\in(0, \Lambda)$, the problem (\ref{main}) has multiple solutions.
	\end{theorem}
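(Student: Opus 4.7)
The plan is to fix $\lambda\in(0,\Lambda)$, let $u_\lambda$ denote the minimal solution provided by Theorem~\ref{main thm}(i), and then produce a second, strictly larger solution of the form $u_\lambda+v$ with $v\not\equiv 0$ via a mountain pass argument. Because the singular reaction $1/u^{\gamma}$ prevents the natural energy associated with (\ref{main}) from being $C^1$ on all of $W^{s,p}_0(\Omega)$, I first translate the problem: set
\begin{equation*}
g_\lambda(x,t)=\frac{\lambda}{(u_\lambda(x)+t^{+})^{\gamma}}+(u_\lambda(x)+t^{+})^{q}-\frac{\lambda}{u_\lambda(x)^{\gamma}}-u_\lambda(x)^{q},
\end{equation*}
where $t^{+}=\max(t,0)$, and let $G_\lambda(x,t)=\int_{0}^{t}g_\lambda(x,\tau)\,d\tau$. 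Since $u_\lambda$ is bounded below away from $0$ on compact subsets of $\Omega$ (a Hopf-type property in the spirit of the nonlocal estimates of \cite{canino2017nonlocal}, already exploited in the proof of Theorem~\ref{main thm}), the singular denominator in $g_\lambda$ is regularised; the translated functional
\begin{equation*}
\Phi_\lambda(v)=\frac{1}{p}[v]_{s,p}^{p}-\int_{\Omega}G_\lambda(x,v)\,dx
\end{equation*}
is then of class $C^{1}$ on $W^{s,p}_0(\Omega)$, and its nonnegative, nontrivial critical points $v$ correspond exactly to second solutions $u_\lambda+v$ of (\ref{main}).

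I then verify the two geometric hypotheses of the mountain pass theorem in the Ghoussoub--Preiss form \cite{ghoussoub1989general}. At the origin, $\Phi_\lambda'(0)=0$ since $u_\lambda$ solves (\ref{main}), and a Taylor expansion of $G_\lambda$ at $v=0$ (the positivity of $u_\lambda$ makes this expansion admissible) combined with the fractional Sobolev embedding yields constants $\rho,\alpha>0$ with $\Phi_\lambda(v)\geq\alpha$ whenever $[v]_{s,p}=\rho$. The hypothesis $q>p-1$ gives $q+1>p$, so for any fixed $\varphi\in C^{\infty}_{c}(\Omega)$ with $\varphi\geq 0$, $\Phi_\lambda(t\varphi)\to-\infty$ as $t\to+\infty$, and a choice $e=T\varphi$ with $T$ large produces $\Phi_\lambda(e)<0$ and $[e]_{s,p}>\rho$.

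The third step is to extract a Palais--Smale (or Cerami) sequence at the mountain pass level
\begin{equation*}
c_\lambda=\inf_{\eta\in\Gamma}\max_{t\in[0,1]}\Phi_\lambda(\eta(t)),\qquad \Gamma=\{\eta\in C([0,1];W^{s,p}_0(\Omega)):\eta(0)=0,\,\eta(1)=e\},
\end{equation*}
by the Ekeland variational principle applied along $\Gamma$, and then to pass to the limit. In the subcritical range $p-1<q<p^{*}_{s}$, the compact embedding $W^{s,p}_0(\Omega)\hookrightarrow L^{q+1}(\Omega)$ yields the $(PS)_{c_\lambda}$ condition unconditionally, so one obtains a critical point $v_\lambda$ of $\Phi_\lambda$ at the positive level $c_\lambda$; since $c_\lambda\geq\alpha>0=\Phi_\lambda(0)$ one has $v_\lambda\not\equiv 0$, and $u_\lambda+v_\lambda$ is the second solution. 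In the critical case $q=p^{*}_{s}$, compactness fails globally but is restored at energy levels strictly below the threshold $\frac{s}{N}S^{N/(ps)}$ associated with the best fractional Sobolev constant $S$; one then tests the mountain pass path against a family of suitably truncated and rescaled Talenti-type extremals for the fractional $p$-Laplacian, and a direct energy computation shows $c_\lambda<\frac{s}{N}S^{N/(ps)}$ for every $\lambda\in(0,\Lambda)$.

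The main obstacle I anticipate is precisely this critical-threshold estimate on $c_\lambda$: the translated functional carries extra lower-order terms coming from the interaction between $u_\lambda$ and the concentrating bubble, and these must be absorbed into the bubble computation without destroying the strict inequality $c_\lambda<\frac{s}{N}S^{N/(ps)}$. A secondary but necessary point is to confirm the sign of the critical point: the truncation $t^{+}$ in $g_\lambda$ together with testing the Euler--Lagrange equation for $\Phi_\lambda$ against $v_\lambda^{-}$ (and the strict positivity of $u_\lambda$ combined with a nonlocal comparison argument) forces $v_\lambda\geq 0$, so that $u_\lambda+v_\lambda$ lies above $u_\lambda$ on a set of positive measure and is genuinely distinct from the minimal solution.
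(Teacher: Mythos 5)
Your overall blueprint — translate the problem by the minimal solution $u_\lambda$, obtain a $C^1$ translated functional $\Phi_\lambda$, run a mountain pass argument with a threshold estimate in the critical case, and use the $t^+$ truncation to get a sign on the critical point — is sound in spirit and is in fact the classical route of Haitao, Giacomoni--Schindler--Tak\'{a}\v{c}, etc. However, there is a genuine gap at the step where you verify the mountain pass geometry around the origin.

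You write that a Taylor expansion of $G_\lambda$ at $v=0$, together with the fractional Sobolev embedding, yields constants $\rho,\alpha>0$ with $\Phi_\lambda(v)\geq\alpha$ on $[v]_{s,p}=\rho$. This does not follow. The linearization of $g_\lambda$ at $t=0^+$ is $\partial_t g_\lambda(x,0^+)=-\gamma\lambda u_\lambda^{-\gamma-1}+q\,u_\lambda^{q-1}$, whose second piece is positive and has no reason to be dominated by the fractional Dirichlet form; moreover when $p>2$ the term $\tfrac{1}{p}[v]_{s,p}^p$ is of higher order than the quadratic part of $\int_\Omega G_\lambda(x,v)\,dx$, so the sign of $\Phi_\lambda$ near $0$ is not controlled by the gradient energy at all. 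Even knowing that $u_\lambda$ is a \emph{strict} local minimizer of the underlying energy (which your translation presupposes but which one only obtains, after some work, as a plain local minimizer) does not give a uniform positive lower bound on a small sphere: the infimum can still be zero. This is precisely the ``zero altitude'' scenario, and it is not an artifact — it is the reason the paper invokes the Ghoussoub--Preiss form of the mountain pass theorem and splits the proof into Case 1 (zero altitude, \eqref{zero altitude}) and Case 2 (proper geometry, \eqref{nonzero altitude}). In Case 1 one cannot get a second critical point merely from a min-max level strictly above $\Phi_\lambda(0)$; instead one extracts a $(PS)_{F,\delta_0}$ sequence localized near the sphere $F=\partial B(u_\lambda,r)$ and proves strong convergence (Lemma~\ref{4l3} and the lemma following it). Your sketch silently assumes Case 2 always holds, which is not justified.

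Two smaller points. First, boundedness of PS (or Cerami) sequences in the subcritical range is not automatic from the compact embedding alone; you need the Ambrosetti--Rabinowitz-type quotient $g_\lambda(x,t)t/G_\lambda(x,t)\to q+1>p$, or the explicit manipulation of the weak formulation the paper carries out in Lemma~\ref{4l3}. Second, the $C^1$-regularity of $\Phi_\lambda$ is not just a matter of $u_\lambda$ being bounded away from $0$ on compact subsets: near $\partial\Omega$ one has $u_\lambda^{-\gamma}\sim d(x,\partial\Omega)^{-\gamma s}$, and the Hardy-type inequality is what makes the G\^{a}teaux derivative well-defined and continuous — the paper deals with this explicitly in the Appendix (Lemmas~\ref{differentiability} and \ref{C1}). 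You assert rather than prove these points; neither would be hard to repair, but the zero-altitude omission is structural and must be fixed for the argument to go through.
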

	\begin{theorem}
		With the growth conditions of f in tact.  Let $u_0\in C^1(\overline{\Omega})$satisfying 
		\begin{equation}\label{bord}
		u_0\geq \eta\mbox{d}(x,\partial\Omega)^s\mbox{ for some }\eta>0
		\end{equation}
		be a local minimizer of $I$ in $C^1(\overline{\Omega})$ topology; that is,
		\begin{eqnarray*}
			\exists\,\epsilon >0\,\mbox{such that }\,u\in C^1(\overline{\Omega})\;, 
			\|u-u_0\|_{C^1(\overline{\Omega})}<\epsilon\Rightarrow I(u_0)\leq I(u).
		\end{eqnarray*}
		Then, $u_0$ is a local minimum of $I$ in $W^{s,p}_0(\Omega)$ also.
	\end{theorem}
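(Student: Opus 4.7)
The plan is to carry out the classical Brezis--Nirenberg $H^{1}$ versus $C^{1}$ local minimizer argument, adapted here to the singular fractional $p$-Laplacian setting. I will argue by contradiction: if $u_{0}$ is not a local minimum in $W^{s,p}_{0}(\Omega)$, produce a sequence of constrained minimizers $u_{\epsilon}$ near $u_{0}$ which lie strictly below $u_{0}$ in energy, and then use regularity theory (the $C^{1,\beta}$ result announced earlier in the paper, in the spirit of Iannizzotto--Mosconi--Squassina) to push these minimizers close to $u_{0}$ in the $C^{1}(\overline{\Omega})$ topology, yielding the required contradiction.

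\textbf{Step 1: Constrained minimization.} Suppose $u_{0}$ is not a $W^{s,p}_{0}$-local minimum. Then for each $\epsilon>0$ there exists $v\in W^{s,p}_{0}(\Omega)$ with $\|v-u_{0}\|_{W^{s,p}_{0}}<\epsilon$ and $I(v)<I(u_{0})$. Consider the convex closed set
\begin{equation*}
\mathcal{K}_{\epsilon}=\bigl\{u\in W^{s,p}_{0}(\Omega):\ \|u-u_{0}\|_{W^{s,p}_{0}(\Omega)}\le \epsilon,\ u\ge \tfrac{\eta}{2}\,d(x,\partial\Omega)^{s}\bigr\}.
\end{equation*}
The weighted lower bound $u\geq\tfrac{\eta}{2}d^{s}$ renders the singular integral $\int_{\Omega}\tfrac{\lambda}{1-\gamma}u^{1-\gamma}$ continuous on $\mathcal{K}_{\epsilon}$, while the Gagliardo seminorm and the (sub)critical term $u^{q}$ are handled by weak lower semicontinuity and the fractional Sobolev embedding exactly as in Section~3. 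Coercivity on $\mathcal{K}_{\epsilon}$ together with weak closedness of $\mathcal{K}_{\epsilon}$ yields a minimizer $u_{\epsilon}\in\mathcal{K}_{\epsilon}$ with $I(u_{\epsilon})\le I(v)<I(u_{0})$.

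\textbf{Step 2: Perturbed Euler--Lagrange equation and regularity.} The minimizer $u_{\epsilon}$ satisfies a variational inequality with two Lagrange multipliers $\mu_{\epsilon}\ge 0$ (for the ball constraint) and a nonnegative measure supported on $\{u_{\epsilon}=\tfrac{\eta}{2}d^{s}\}$ (for the pointwise constraint). Using \eqref{bord} one sees the pointwise constraint is inactive in the interior for $\epsilon$ small, so $u_{\epsilon}$ weakly solves
\begin{equation*}
\bigl(1+\mu_{\epsilon}\bigr)(-\Delta_{p})^{s}u_{\epsilon}\;=\;\tfrac{\lambda}{u_{\epsilon}^{\gamma}}+u_{\epsilon}^{q}+\mu_{\epsilon}(-\Delta_{p})^{s}u_{0}\quad\text{in }\Omega.
\end{equation*}
The right-hand side is controlled: the singular term is dominated by $Cd(x,\partial\Omega)^{-s\gamma}$ with $s\gamma<1$, the $u^{q}$ term by the $W^{s,p}_{0}$-bound together with Sobolev embedding, and $(-\Delta_{p})^{s}u_{0}$ is controlled because $u_{0}\in C^{1}(\overline{\Omega})$. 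The global $C^{1,\beta}$ up-to-the-boundary regularity estimates for the fractional $p$-Laplacian (as used in Theorem~3 of the paper) then give a \emph{uniform} bound $\|u_{\epsilon}\|_{C^{1,\beta}(\overline{\Omega})}\le M$, independent of $\epsilon$.

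\textbf{Step 3: Passage to the limit and contradiction.} Since $\|u_{\epsilon}-u_{0}\|_{W^{s,p}_{0}}\le\epsilon\to 0$, we have $u_{\epsilon}\to u_{0}$ in $W^{s,p}_{0}(\Omega)$, hence (up to a subsequence) a.e.\ in $\Omega$. Combined with the uniform $C^{1,\beta}$ bound, the Arzel\`a--Ascoli compactness of the embedding $C^{1,\beta}(\overline{\Omega})\hookrightarrow C^{1}(\overline{\Omega})$ forces $u_{\epsilon}\to u_{0}$ in $C^{1}(\overline{\Omega})$. Thus for $\epsilon$ sufficiently small $\|u_{\epsilon}-u_{0}\|_{C^{1}(\overline{\Omega})}<\epsilon_{0}$, and the $C^{1}$-local minimality of $u_{0}$ gives $I(u_{0})\le I(u_{\epsilon})$, contradicting $I(u_{\epsilon})<I(u_{0})$.

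\textbf{Main obstacle.} The delicate part is Step~2: obtaining a \emph{uniform} $C^{1,\beta}(\overline{\Omega})$ estimate on the constrained minimizers in the presence of (a) the singular source $\lambda u^{-\gamma}$, which must be tamed via the weighted barrier inherited from \eqref{bord}, and (b) the extra nonlocal perturbation $\mu_{\epsilon}(-\Delta_{p})^{s}u_{0}$ coming from the ball constraint, whose contribution must be shown to remain bounded uniformly in $\epsilon$ (one either proves $\mu_{\epsilon}$ stays bounded, or shows $\mu_{\epsilon}=0$ for small $\epsilon$ because the ball constraint is inactive at any minimizer sitting strictly below $I(u_{0})$). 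Once these two technical points are settled, the Brezis--Nirenberg-type bootstrap from $W^{s,p}_{0}$ to $C^{1}$ closes the argument.
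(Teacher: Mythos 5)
Your overall strategy -- argue by contradiction, produce constrained minimizers $u_\epsilon$ with $I(u_\epsilon)<I(u_0)$, push them into a uniform $C^{1,\beta}$ ball, and conclude $u_\epsilon\to u_0$ in $C^{1}(\overline\Omega)$ by compact embedding -- is exactly the Brezis--Nirenberg template that the paper also follows. However, two of your choices do not actually work, and they are not small technicalities.

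\textbf{The constraint set is wrong.} You put the ball constraint in the $W^{s,p}_0$-norm, which means the Lagrange multiplier enters through the derivative of $u\mapsto\frac1p\|u-u_0\|_{W^{s,p}_0}^p$. That derivative is $(-\Delta_p)^s(u_\epsilon-u_0)$, and for $p\ne 2$ the operator $(-\Delta_p)^s$ is not additive, so
\begin{equation*}
(-\Delta_p)^s(u_\epsilon-u_0)\neq (-\Delta_p)^s u_\epsilon-(-\Delta_p)^s u_0 .
\end{equation*}
Consequently the Euler--Lagrange equation you write, $(1+\mu_\epsilon)(-\Delta_p)^s u_\epsilon=\frac{\lambda}{u_\epsilon^\gamma}+u_\epsilon^q+\mu_\epsilon(-\Delta_p)^s u_0$, is simply false. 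What one actually gets is a sum of two genuinely different nonlocal $p$-Laplacians (of $u_\epsilon$ and of $u_\epsilon-u_0$) on the left-hand side, a coupling that the available fractional regularity theory does not digest. This is precisely why the paper (following Giacomoni--Schindler--Tak\'a\v{c} and the $H^1$ vs.\ $C^1$ tradition) constrains a lower-order quantity instead: it minimizes over $S_\epsilon=\{v:K(v)\le\epsilon\}$ with $K(w)=\frac{1}{r+1}\int|w-u_0|^{r+1}dx$ (subcritical case) or $\chi(w)=\frac{1}{p_s^*}\int|w-u_0|^{p_s^*}dx$ together with a truncated nonlinearity (critical case). Then $K'(w)=|w-u_0|^{r-1}(w-u_0)$ is a compact lower-order perturbation, so the Lagrange multiplier $\mu_\epsilon\le 0$ lands harmlessly as an extra source term on the right, and the Moser iteration / $C^\alpha$ / $C^{1,\alpha}$ machinery applies uniformly.

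\textbf{The pointwise barrier cannot be imposed as a constraint whose inactivity you assume.} You build the lower barrier $u\ge\frac{\eta}{2}d^s$ into the admissible set and then claim it is ``inactive in the interior for $\epsilon$ small.'' But $\|u_\epsilon-u_0\|_{W^{s,p}_0}\le\epsilon$ gives no pointwise control whatsoever, so there is no reason the constraint should be inactive, and if it is active you have an extra nonnegative measure Lagrange multiplier that you do not handle. The paper instead \emph{proves} a posteriori that the constrained minimizer satisfies $v_\epsilon\ge\eta\phi_1$ for small $\eta>0$: it assumes the contrary, sets $v_\eta=(\eta\phi_1-v_\epsilon)^+$, and derives a sign contradiction from the monotonicity of $(-\Delta_p)^s$ together with the fact that $t\mapsto t^{-\gamma}+f(x,t)$ is nonincreasing near zero. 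That derived barrier then feeds into the uniform $L^\infty$ and $C^\alpha$ bounds. Finally, note the critical case $q=p_s^*-1$ needs the separate truncation argument $I_j\to I$ that the paper carries out; your proposal is silent on it.

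In short: the skeleton of your argument is the right one, but the $W^{s,p}_0$-ball constraint makes Step~2 collapse for $p\ne 2$, and the a priori pointwise constraint introduces a multiplier you do not control. Replacing the ball constraint by the $L^{r+1}$ (resp.\ $L^{p_s^*}$) penalty and deriving the lower barrier by comparison, as the paper does, repairs both defects.
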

	
	\section{Mathematical formulation and Space setup}
	This section is entirely devoted to a brief discussion about a few definitions,  notations and function spaces which will be used henceforth in this manuscript. We begin by defining the following function space.  Let $\Omega\subset\mathbb{R}^N$ and $Q=\mathbb{R}^{2N}\setminus((\mathbb{R}^N\setminus\Omega)\times(\mathbb{R}^N\setminus\Omega))$, then the space $(X, \|\|_X)$ is defined by
	\begin{eqnarray}
	X&=&\left\{u:\mathbb{R^N}\rightarrow\mathbb{R}~\text{is measurable}, u|_{\Omega}\in L^p(\Omega) ~\text{and}~\frac{|u(x)-u(y)|}{|x-y|^{\frac{N+ps}{p}}}\in L^{p}(Q)\right\}\nonumber
	\end{eqnarray}
	equipped with the Gagliardo norm 
	\begin{eqnarray}
	\|u\|_X&=&\|u\|_{p}+\left(\int_{Q}\frac{|u(x)-u(y)|^p}{|x-y|^{N+ps}}dxdy\right)^{\frac{1}{p}}.\nonumber
	\end{eqnarray}
	Here $\|u\|_{p}$ refers to the $L^p$-norm of $u$. We further define the space 
	\begin{eqnarray}
	X_0&=&\left\{u\in X: u=0 ~\text{a.e. in}~ \mathbb{R}^N\setminus\Omega\right\}\nonumber
	\end{eqnarray}
	equiped with the norm
	\begin{eqnarray}
	\|u\|&=&\left(\int_{Q}\frac{|u(x)-u(y)|^p}{|x-y|^{N+ps}}dxdy\right)^{\frac{1}{p}}.\nonumber
	\end{eqnarray}
	\noindent The best Sobolev constant is defined as 
	\begin{equation}\label{sobolev const}
	S=\underset{u\in X_0\setminus\{0\}}{\inf}\cfrac{\int_{Q}\frac{|u(x)-u(y)|^p}{|x-y|^{N+ps}}dxdy}{\left(\int_\Omega|p_s^*|dx\right)^{\frac{p}{p_s^*}}}
	\end{equation}
	We now define a weak solution to the problem defined in (\ref{main}).
	\begin{definition}\label{defn weak}
		A function $u\in X_0$ is a weak solution to the problem (\ref{main}), if 
		\begin{align*}
		&(i)~ u>0, ~u^{-\gamma}\phi\in L^1(\Omega) ~\text{and}\\
		&(ii)~ \int_{Q}\frac{|u(x)-u(y)|^{p-2}(u(x)-u(y))(\phi(x)-\phi(y))}{|x-y|^{N+ps}}dxdy-\int_{\Omega}\frac{\lambda}{u^{\gamma}}\phi-\int_{\Omega}u^q\phi=0
		\end{align*}
		for each $\phi\in X_0.$
	\end{definition}
	\noindent Following are the definitions of a sub and a super solution to the problem (\ref{main}).
	\begin{definition}\label{defn subsoln}
		A function $\underline{u}\in X_0$ is a weak subsolution to the problem (\ref{main}), if 
		\begin{align*}
		&(i)~ \underline{u}>0, ~\underline{u}^{-\gamma}\phi\in L^1(\Omega) ~\text{and}\\
		&(ii)~ \int_{Q}\frac{|\underline{u}(x)-\underline{u}(y)|^{p-2}(\underline{u}(x)-\underline{u}(y))(\phi(x)-\phi(y))}{|x-y|^{N+ps}}dxdy-\int_{\Omega}\frac{\lambda}{\underline{u}^{\gamma}}\phi-\int_{\Omega}\underline{u}^q\phi\leq0
		\end{align*}
		for every nonnegative $\phi\in X_0.$
	\end{definition}
	\begin{definition}\label{defn supsoln}
		A function $\bar{u}\in X_0$ is a weak supersolution to the problem (\ref{main}), if 
		\begin{align*}
		&(i)~ \bar{u}>0, ~\bar{u}^{-\gamma}\phi\in L^1(\Omega) ~\text{and}\\
		&(ii)~ \int_{Q}\frac{|\bar{u}(x)-\bar{u}(y)|^{p-2}(\bar{u}(x)-\bar{u}(y))(\phi(x)-\phi(y))}{|x-y|^{N+ps}}dxdy-\int_{\Omega}\frac{\lambda}{\bar{u}^{\gamma}}\phi-\int_{\Omega}\bar{u}^q\phi\geq0
		\end{align*}
		for all nonnegative $\phi\in X_0.$
	\end{definition}
	\noindent We now list out the embedding results in the form of a Lemma pertaining to the function space $X_0$ \cite{servadei2012mountain, servadei2013variational}. 
	\begin{lemma}\label{embedding}The following embedding results holds for the space $X_0$.
		\begin{enumerate}
			\item If $\Omega$ has a Lipshitz boundary, then the embedding $X_0 \hookrightarrow L^{q}(\Omega) $ for $q\in[1,p_s^*)$, where $p_s^*=\frac{Np}{N-ps}$.
			\item The embediing $X_0\hookrightarrow L^{p_s^*}(\Omega)$ is continuous. 
		\end{enumerate}
	\end{lemma}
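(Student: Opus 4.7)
The plan is to reduce both parts of the lemma to the fundamental fractional Sobolev inequality on $\mathbb{R}^N$: there exists $C=C(N,s,p)>0$ such that for every $v\in W^{s,p}(\mathbb{R}^N)$,
\begin{equation*}
\|v\|_{L^{p_s^*}(\mathbb{R}^N)}^{p}\leq C\int_{\mathbb{R}^{2N}}\frac{|v(x)-v(y)|^{p}}{|x-y|^{N+ps}}\,dx\,dy.
\end{equation*}
Assuming this, part (2) is immediate: for $u\in X_0$ the trivial extension (which is already built into the definition $u\equiv 0$ a.e.\ outside $\Omega$) lies in $W^{s,p}(\mathbb{R}^N)$, and the double integral over $\mathbb{R}^{2N}$ collapses to the integral over $Q$ because the integrand vanishes identically on $(\mathbb{R}^N\setminus\Omega)\times(\mathbb{R}^N\setminus\Omega)$. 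Hence $\|u\|_{p_s^*}\leq C^{1/p}\|u\|$, giving the continuous critical embedding.

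For part (1), the continuous embedding into $L^q(\Omega)$ for $q\in[1,p_s^*)$ is then a one-line consequence of H\"{o}lder's inequality on the bounded domain $\Omega$:
\begin{equation*}
\|u\|_{L^q(\Omega)}\leq |\Omega|^{1/q-1/p_s^*}\|u\|_{L^{p_s^*}(\Omega)}\leq C'\|u\|.
\end{equation*}
If, as is typical in the Servadei--Valdinoci framework, the intended statement is actually compactness for $q<p_s^*$, the plan is to verify the Riesz--Fr\'{e}chet--Kolmogorov criterion on a bounded sequence $\{u_n\}\subset X_0$: equi-integrability at infinity is automatic since $\mathrm{supp}(u_n)\subset\overline{\Omega}$; $L^p$-translation continuity uniformly in $n$ follows by controlling $\int|u_n(\cdot+h)-u_n(\cdot)|^p$ by $|h|^{sp}\|u_n\|^p$ up to a $\log$ factor via a standard splitting of the Gagliardo integral into the regions $|x-y|\leq|h|$ and $|x-y|>|h|$; and then upgrading from $L^p$ to $L^q$ for $q<p_s^*$ uses interpolation between $L^p$ (where compactness is obtained) and $L^{p_s^*}$ (where boundedness from part (2) controls the interpolant). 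The Lipschitz hypothesis is used here to ensure the trivial extension has the Gagliardo seminorm equivalent to the intrinsic one on $\Omega$.

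The main technical obstacle is the fractional Sobolev inequality itself for general $1<p<\infty$. For $p=2$ it is a direct consequence of the Riesz potential representation and the Hardy--Littlewood--Sobolev inequality; for $p\neq 2$ one cannot invoke this shortcut, and the cleanest route is either (a) a dyadic/Besov-space characterization of $W^{s,p}$ together with the embedding $B^s_{p,p}\hookrightarrow L^{p_s^*}$, or (b) a truncation and level-set argument combined with the fractional analogue of the co-area formula, as carried out in Di~Nezza--Palatucci--Valdinoci. Since both parts of the lemma are cited from \cite{servadei2012mountain, servadei2013variational}, the plan is to quote the inequality from there rather than reprove it, and devote the written proof to the reduction steps outlined above.
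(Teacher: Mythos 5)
Your proposal is correct and matches the paper's treatment: the paper gives no proof at all, simply citing \cite{servadei2012mountain, servadei2013variational} for these embeddings, and your reduction (critical embedding from the fractional Sobolev inequality, since the Gagliardo integral over $\mathbb{R}^{2N}$ coincides with the one over $Q$ for $u\in X_0$, then H\"older on the bounded domain for $q<p_s^*$, with the Kolmogorov argument if compactness is intended) is the standard argument behind those cited results. No gap to report.
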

	\noindent The main goal achieved in this article is the existence of two distinct, positive weak solutions to the problem \eqref{main} in $X_0$. To establish that we will engage ourselves to find the existence of two distinct critical points to the following energy functional, $I_\lambda\colon X_0\rightarrow\mathbb{R}$ defined as
	\begin{equation*}
	I_\lambda(u)=\frac{1}{p}\int_{Q}\frac{|u(x)-u(y)|^{p}}{|x-y|^{N+ps}}dxdy-\frac{\lambda}{1-\gamma}\int_{\Omega}(u^+)^{1-\gamma}dx-\frac{1}{q+1}\int_{\Omega}(u^+)^{q+1}dx.
	\end{equation*}
	Here, $u^+=\max\{u, 0\}$ and $u^-=\max\{-u, 0\}$. It is easy to observe that $I_\lambda$ is not $C^1$ due the presence of the singular term in it. Therefore the usual approach by Mountain Pass lemma \cite{ambrosetti1973dual} fails. So, we will proceed with a cut off functional argument as in Ghoussoub \& Preiss \cite{ghoussoub1989general}. 
	Let us define 
	\begin{equation*}
	\Lambda=\inf\{\lambda>0:~\text{The problem (\ref{main}) has no weak solution}\}
	\end{equation*}
	\section{Main results}
	We begin this section with the following two comparison results.
	\begin{lemma}[Weak Comparison Principle]\label{weak comparison}
		Let $u, v\in X_0$. Suppose, $(-\Delta_p)^sv-\frac{\lambda}{v^{\gamma}}\geq(-\Delta_p)^su-\frac{\lambda}{u^{\gamma}}$ weakly with $v=u=0$ in $\mathbb{R}^N\setminus\Omega$.
		Then $v\geq u$ in $\mathbb{R}^N.$
	\end{lemma}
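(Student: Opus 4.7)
The strategy is the standard one for comparison principles of fractional $p$-Laplacian type: rewrite the hypothesis in weak form, use $(u-v)^+$ as a test function, and then exploit two competing monotonicity properties — the strict monotonicity of the vector field $\Phi(t)=|t|^{p-2}t$ underlying $(-\Delta_p)^s$, and the strict monotonicity of $t\mapsto -t^{-\gamma}$ for $t>0$ — to squeeze both sides to zero. Subtracting the two weak inequalities, for every nonnegative $\phi\in X_0$ we would have
\begin{equation*}
\int_Q\frac{\bigl(\Phi(v(x)-v(y))-\Phi(u(x)-u(y))\bigr)\bigl(\phi(x)-\phi(y)\bigr)}{|x-y|^{N+ps}}\,dxdy\;\geq\;\lambda\int_\Omega\phi\bigl(v^{-\gamma}-u^{-\gamma}\bigr)\,dx.
\end{equation*}

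First I would verify that $\phi:=(u-v)^+\in X_0$ is a legitimate test function (it lies in $X_0$ because $X_0$ is a lattice under the standard truncation, and its use in the singular terms is justified because on $\{u>v\}$ the ratio $\phi/v^\gamma$ is controlled by $u^{1-\gamma}/v^{-\gamma}$ and $u,v>0$ in $\Omega$). The right-hand side with this test function is $\lambda\int_{\{u>v\}}(u-v)\bigl(v^{-\gamma}-u^{-\gamma}\bigr)\,dx$, which is pointwise $\geq0$ since $0<v<u$ implies $v^{-\gamma}>u^{-\gamma}$; and it is strictly positive whenever $|\{u>v\}|>0$.

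Next I would show that with the same choice of $\phi$ the left-hand side is pointwise $\leq0$ for a.e.\ $(x,y)\in Q$, by the case analysis based on membership in $A=\{u>v\}$. Writing $a=u(x)-u(y)$, $b=v(x)-v(y)$: if $x,y\in A$ then $\phi(x)-\phi(y)=a-b$, and the integrand equals $-(\Phi(a)-\Phi(b))(a-b)\leq0$ by strict monotonicity of $\Phi$; if $x\in A$, $y\notin A$ then $\phi(x)-\phi(y)=u(x)-v(x)\geq0$ while $a\geq b$ forces $\Phi(b)-\Phi(a)\leq0$; the case $x\notin A,\,y\in A$ is symmetric; and both outside $A$ gives zero. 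Therefore the left-hand side is $\leq0$, the right-hand side is $\geq0$, and the inequality forces both to vanish.

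Finally, the vanishing of the right-hand side together with the strict positivity of its integrand on $\{u>v\}$ yields $|\{u>v\}|=0$ in $\Omega$; together with $u=v=0$ on $\mathbb{R}^N\setminus\Omega$, this gives $v\geq u$ a.e.\ in $\mathbb{R}^N$. The main technical obstacle is making the pointwise case analysis rigorous and justifying the admissibility of $(u-v)^+$ as a test function against the singular term $\lambda u^{-\gamma}$; here one typically uses a truncation argument $(u-v)^+\wedge k$ and passes to the limit via the monotone convergence theorem, invoking the a priori integrability $u^{-\gamma}(u-v)^+\in L^1(\Omega)$ built into Definition 2.1 as the only nontrivial input.
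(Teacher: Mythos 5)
Your proposal is correct, and the overall strategy coincides with the paper's: test the difference of the two weak inequalities with $\phi=(u-v)^+$, observe that the nonlocal term has one sign and the singular term has the opposite sign, and squeeze to conclude $|\{u>v\}|=0$. Where you diverge is in \emph{how} the sign of the nonlocal term is established. The paper linearizes $|b|^{p-2}b-|a|^{p-2}a$ via the integral-kernel identity $(p-1)(b-a)\int_0^1|a+t(b-a)|^{p-2}\,dt$ and then invokes the algebraic relation $[\psi(y)-\psi(x)][\psi^+(x)-\psi^+(y)]=-(\psi^+(x)-\psi^+(y))^2$ (which, strictly, is an inequality ``$\le$'' once one accounts for the cross terms $\psi^-(y)\psi^+(x)$ and $\psi^-(x)\psi^+(y)$, but the inequality is in the correct direction so the argument survives). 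You instead carry out a direct four-way case analysis on $(x,y)$ relative to the set $A=\{u>v\}$ and use only the monotonicity of $\Phi(t)=|t|^{p-2}t$, which is cleaner — it avoids the integral representation altogether and sidesteps the slightly misstated algebraic identity. You also close the argument via the strict positivity of the singular integrand on $\{u>v\}$, whereas the paper closes by forcing $(\psi^+(x)-\psi^+(y))^2=0$ a.e.; both are legitimate endgames given $\psi^+\equiv 0$ off $\Omega$. Your remark about truncating $(u-v)^+\wedge k$ to justify admissibility against $u^{-\gamma}$ is a reasonable extra precaution that the paper omits. In short: same proof skeleton, but your treatment of the key sign estimate is a genuinely more elementary and more robust variant.
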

	\begin{proof}
		Since, $(-\Delta_p)^sv-\frac{\lambda}{v^{\gamma}}\geq(-\Delta_p)^su-\frac{\lambda}{u^{\gamma}}$ weakly with $u=v=0$ in $\mathbb{R}^N\setminus\Omega$, we have
		\begin{align}\label{compprinci}
		\langle(-\Delta_p)^sv,\phi\rangle-\int_{\Omega}\frac{\lambda\phi}{v^{\gamma}}dx&\geq\langle(-\Delta_p)^su,\phi\rangle-\int_{\Omega}\frac{\lambda\phi}{u^{\gamma}}dx, ~\forall{\phi\geq 0\in X_0}.
		\end{align}
		In particular choose $\phi=(u-v)^{+}$. To this choice, the inequlity in \eqref{compprinci} looks as follows.
		\begin{align}\label{compprinci1}
		\langle(-\Delta_p)^sv-(-\Delta_p)^su,(u-v)^{+}\rangle-\int_{\Omega}\lambda(u-v)^{+}\left(\frac{1}{v^{\gamma}}-\frac{1}{u^{\gamma}}\right)dx&\geq 0.
		\end{align}
		Let $\psi=u-v$. 
		The identity 
		\begin{align}\label{identity}
		|b|^{p-2}b-|a|^{p-2}a&=(p-1)(b-a)\int_0^1|a+t(b-a)|^{p-2}dt
		\end{align}
		with $a=v(x)-v(y)$, $b=u(x)-u(y)$ gives
		\begin{align}
		|u(x)-u(y)|^{p-2}(u(x)-u(y))-|v(x)-v(y)|^{p-2}(u(x)-u(y))\nonumber\\
		=(p-1)\{(u(y)-v(y))-(u(x)-v(x))\}Q(x,y)
		\end{align}
		where 
		\begin{align}
		Q(x,y)&=\int_0^1|(u(x)-u(y))+t((v(x)-v(y))-(u(x)-u(y)))|^{p-2}dt.
		\end{align}
		We choose the test function $\phi=(u-v)^{+}$. We express,
		\begin{align}
		\psi&=u-v\nonumber\\
		&=(u-v)^{+}-(u-v)^{-}\nonumber
		\end{align}
		to further obtain
		\begin{align}\label{negativity}
		[\psi(y)-\psi(x)][\phi(x)-\phi(y)]&=-(\psi^{+}(x)-\psi^{+}(y))^2.
		\end{align}
		The equation in \eqref{negativity} implies
		\begin{align}
		0&\geq \langle(-\Delta_p)^sv-(-\Delta_p)^su,(v-u)^{+}\rangle\nonumber\\&=-(p-1)\frac{Q(x,y)}{|x-y|^{N+sp}}(\psi^{+}(x)-\psi^{+}(y))^2\nonumber\\&\geq0.
		\end{align}
		This leads to the conclusion that the Lebesgue measure of $\Omega^{+}$, i.e., $|\Omega^{+}|=0$. In other words $v\geq u$ a.e. in $\Omega$.
	\end{proof}
	\begin{lemma}\label{strong}
		Consider the problem 
		\begin{align}\label{app1}
		(-\Delta_p)^su&=f~\text{in}~\Omega\nonumber\\
		u&=g~\text{in}~\mathbb{R}^N\setminus\Omega,
		\end{align}
		where $f\in L^{\infty}(\Omega)$ and $g\in W^{s,p}(\mathbb{R}^N)$. If $u\in W^{s,p}(\mathbb{R}^N)$ is a weak super-solution of \eqref{app1} with $f=0$ and $g\geq 0$, then $u\geq 0$ a.e. and admits a lower semicontinuous representation in $\Omega$.
	\end{lemma}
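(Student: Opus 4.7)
The statement splits into two independent claims: the sign assertion $u\geq 0$ a.e.\ in $\Omega$, and the existence of a lower semicontinuous representative. I would handle them in that order, since the first is elementary and the second is more delicate.

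For the sign claim, the natural candidate test function is $\phi = u^{-} = \max\{-u,0\}$. Because $g\geq 0$ forces $u^{-}\equiv 0$ on $\mathbb{R}^N\setminus\Omega$, one has $u^{-}\in X_0$ (as $u\in W^{s,p}(\mathbb{R}^N)$ and truncation preserves the Gagliardo seminorm), and clearly $u^{-}\geq 0$, so this is an admissible test function in the weak supersolution inequality with $f=0$. The plan is then to exploit the pointwise algebraic inequality
\begin{equation*}
|a-b|^{p-2}(a-b)\bigl(a^{-}-b^{-}\bigr)\leq -\,\bigl|a^{-}-b^{-}\bigr|^{p},\qquad a,b\in\mathbb{R},
\end{equation*}
which one verifies by the three cases $a,b\geq 0$ (both sides vanish), $a,b\leq 0$ (equality), and opposite signs (use that $|a-b|\geq |a^{-}-b^{-}|$ in that case). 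Applying this with $a=u(x),\,b=u(y)$, inserting $\phi=u^{-}$ into the supersolution inequality, and dividing by $|x-y|^{N+ps}$ before integrating over $Q$ gives
\begin{equation*}
0\leq \int_{Q}\frac{|u(x)-u(y)|^{p-2}(u(x)-u(y))(u^{-}(x)-u^{-}(y))}{|x-y|^{N+ps}}\,dx\,dy\leq -\,\|u^{-}\|^{p}.
\end{equation*}
Hence $\|u^{-}\|=0$, which together with $u^{-}=0$ a.e.\ in $\mathbb{R}^N\setminus\Omega$ forces $u^{-}=0$ a.e.\ in $\mathbb{R}^N$.

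For the lower semicontinuous representative I would not reprove anything from scratch; instead I would invoke the nonlocal potential-theoretic results for $(-\Delta_p)^s$-superharmonic functions developed by Korvenp\"a\"a--Kuusi--Palatucci and Di\,Castro--Kuusi--Palatucci, which show that every weak supersolution of $(-\Delta_p)^s u=0$ coincides a.e.\ with its lsc envelope
\begin{equation*}
u^{*}(x)\;=\;\operatornamewithlimits{ess\,lim\,inf}_{y\to x} u(y),\qquad x\in\Omega.
\end{equation*}
The hypotheses here (namely $u\in W^{s,p}(\mathbb{R}^N)$ and $(-\Delta_p)^s u\geq 0$ weakly in $\Omega$) put us directly in the framework of those results, so pointing to them closes the argument.

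The only step that is not completely mechanical is verifying the pointwise inequality in the mixed-sign case; this is the place to be careful. The second half is essentially a citation, and I would not attempt to reproduce the (lengthy) Moser/De\,Giorgi-type machinery behind it. If a self-contained treatment were desired, the main obstacle would be establishing local boundedness of $u$ from above (to define the lsc representative as a pointwise essential liminf), which is exactly the content of the cited nonlocal De\,Giorgi--Nash--Moser theory.
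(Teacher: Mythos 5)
Your argument for $u\geq 0$ is essentially the paper's: both test with $u^-$ (admissible since $g\geq 0$ forces $u^-\in X_0$) and exploit a pointwise sign inequality for the Gagliardo kernel. The paper uses the scalar inequality $(a-b)(a_--b_-)\leq -(a_--b_-)^2$ and multiplies through by $|a-b|^{p-2}$, whereas you derive the closed form $|a-b|^{p-2}(a-b)(a^--b^-)\leq -|a^--b^-|^p$; your version is slightly cleaner because it gives $0\leq -\|u^-\|^p$ directly, so the conclusion $u^-\equiv 0$ is immediate, while the paper's integrand bound requires one further (unstated) step to rule out $u^-$ being a nonzero constant. For the lower semicontinuous representative you simply cite the Korvenp\"a\"a--Kuusi--Palatucci / Di Castro--Kuusi--Palatucci theory of $(-\Delta_p)^s$-superharmonic functions; the paper instead unfolds the standard argument: it sets $u^*(x_0)=\operatorname*{ess\,lim\,inf}_{x\to x_0}u(x)$, shows $u\geq u^*$ at Lebesgue points by an averaging bound, and shows $u\leq u^*$ by applying a De\,Giorgi--type local boundedness estimate with tail to the subsolution $-u$ and letting $r\to 0^+$, the Lebesgue-point property and $u\in W^{s,p}$ killing the average and tail terms respectively. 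Both routes rest on the same nonlocal De\,Giorgi--Nash--Moser machinery; yours delegates the entire second half to the literature, while the paper reduces the citation to a single local-boundedness-with-tail estimate and carries out the pointwise identification itself. Either is acceptable; spelling out the Lebesgue-point argument as the paper does makes it clearer exactly which black-box estimate is being used and where the hypotheses $u\in W^{s,p}(\mathbb{R}^N)$ and $x_0$ a Lebesgue point enter.
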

	\begin{proof}
		We first show that $u\geq 0 \in W^{s,p}(\mathbb{R}^N)$. We already have $u=g\geq 0$ in $\mathbb{R}^N\setminus\Omega$. Thus by an elementary inequality $(a-b)(a_{-}-b_{-})\leq -(a_{-}-b_{-})^2$ we have
		\begin{align}\label{uminusiszero}
		0&\leq \int_{\mathbb{R}^{2N}}\frac{|u(x)-u(y)|^{p-2}(u(x)-u(y))(u_{-}(x)-u_{-}(y))}{|x-y|^{N+ps}}dxdy\nonumber\\
		&\leq -\int_{\mathbb{R}^{2N}}\frac{|u(x)-u(y)|^{p-2}(u_{-}(x)-u_{-}(y))^2}{|x-y|^{N+ps}}dxdy.
		\end{align}
		From \eqref{uminusiszero} we have $u\geq 0$ a.e. in $\Omega$.\\
		We now define the following 
		\begin{align}
		u^{*}(x_0)&=\operatorname{ess}\liminf_{x\rightarrow x_0}u(x).
		\end{align}
		Cleary $u^{*}\geq 0$ a.e. in $\Omega$ since $u\geq 0$ a.e. in $\Omega$. Let $x_0\in \Omega$ be a Lebesgue point. By the definition of a Lebesgue point we have
		\begin{align}\label{5.11}
		u(x_0)&=\lim_{r\rightarrow 0^{+}}\def\avint{\mathop{\,\rlap{-}\!\!\int}\nolimits}\avint_{B_r(x_0)}u(x)dx\nonumber\\
		&\geq \lim_{r\rightarrow 0^{+}}\operatorname{ess}\inf_{B_r(x_0)}u(x)\nonumber\\
		&=u^{*}(x_0).
		\end{align}
		We now prove the reverse inequality, i.e. $u\leq u^{*}$ a.e. in $\Omega$.  Consider the function $-u$, which serves as a weak subsolution to \eqref{app1}, with $k=u(x_0)$ to obtain 
		\begin{align}
		\operatorname{ess}\sup_{B_{r/2}(x_0)}(-u)&\leq -u(x_0)+\text{Tail}((u(x_0)-u)_{+};x_0,r/2)\nonumber\\
		&+ C\left(\def\avint{\mathop{\,\rlap{-}\!\!\int}\nolimits}\avint_{B_r(x_0)}(u(x_0)-u(x))_{+}^p \,dx\right)^{1/p}
		\end{align}
		Passing the limit $r\rightarrow 0^{+}$, we have
		\begin{align}
		\lim_{r\rightarrow 0^{+}}\left(\def\avint{\mathop{\,\rlap{-}\!\!\int}\nolimits}\avint_{B_r(x_0)}(u(x_0)-u(x))_{+}^p dx\right)^{1/p}&=0.
		\end{align}
		since, $x_0\in\Omega$ is a Lebesgue point. Also by the H\"{o}lder's inequality we have 
		\begin{align}
		\text{Tail}((u(x_0)-u)_{+};x_0,r/2)&\leq r^{2s}\left(\int_{\mathbb{R}^{2N}\setminus B_r(x_0)}\frac{(u(x_0)-u(x))_{+}^{p}}{|x_0-x|^{N+ps}}dxdy\right)^{1/p}\nonumber\\
		&\times\left(\int_{\mathbb{R}^{N}\setminus B_r(x_0)}\frac{1}{|x_0-x|^{N+ps}}dxdy\right)^{1/q}\nonumber\\
		&\leq Cr^{s}\left(\int_{\mathbb{R}^N}\frac{|u(x_0)-u(x)|^p}{|x-x_0|^{N+ps}}dx\right)^{1/p}\rightarrow 0~\text{as}~r\rightarrow 0^{+}.
		\end{align}
		Thus we have 
		\begin{align}\label{5.15}
		\lim_{r\rightarrow 0^{+}}\operatorname{ess}\sup_{B_{r/2}(x_0)}(-u)&\leq -u(x_0).
		\end{align}
		This implies $u^{*}(x_0)\geq u(x_0)$ for every Lebesgue point in $\Omega$ and hence for almost all $x\in\Omega$. From \eqref{5.11} and \eqref{5.15}, we obtain the lower semicontinuous representation of $u$.
	\end{proof}
	\begin{corollary}[Strong Maximum Principle]\label{strong maximum principle}
		Let $u\in X_0$. Suppose $u\geq 0\in\mathbb{R}^N\setminus\Omega$ and for all $\phi\in X_0$ with $\phi\geq0$, we have
		\begin{equation*}
		\int_Q \cfrac{|u(x)-u(y)|^{p-2}(u(x)-u(y))}{|x-y|^{N+ps}}(\phi(x)-\phi(y))dxdy\geq0.
		\end{equation*}
		Then $u$ has a lower semicontinuous representation in $\Omega$, such that either $u\equiv0$ or $u>0$.
	\end{corollary}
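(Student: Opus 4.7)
The plan is to first extract the existence of a lower semicontinuous representative from Lemma \ref{strong}, and then push through a strong-maximum-principle dichotomy via a nonlocal weak Harnack argument. First I would apply Lemma \ref{strong} with $f \equiv 0$ and $g := u|_{\mathbb{R}^N \setminus \Omega}$. Since $u \in X_0$ satisfies $u = 0$ a.e.\ on $\mathbb{R}^N \setminus \Omega$, one has $g \geq 0$, and the standing hypothesis is precisely that $u$ is a weak super-solution of $(-\Delta_p)^s u = 0$ in $\Omega$. Lemma \ref{strong} then delivers $u \geq 0$ a.e.\ in $\mathbb{R}^N$ together with a lower semicontinuous representative $\tilde u$ of $u$ on $\Omega$.

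Next I would establish the dichotomy. Let $U = \{x \in \Omega : \tilde u(x) > 0\}$, which is open in $\Omega$ by lower semicontinuity. Assume $\tilde u \not\equiv 0$, so that $U \neq \emptyset$, and suppose for contradiction that $U \neq \Omega$. Then there exists $x_0 \in \Omega$ with $\tilde u(x_0) = 0$; choose $r>0$ small enough that $\overline{B_{2r}(x_0)} \subset \Omega$. The heart of the argument is to show that the single vanishing $\tilde u(x_0) = 0$ forces $\tilde u \equiv 0$ a.e.\ on $B_r(x_0)$. For this I would invoke the nonlocal weak Harnack inequality for nonnegative weak super-solutions of the fractional $p$-Laplacian (in the spirit of Di Castro--Kuusi--Palatucci), which yields, for some $\varepsilon \in (0, p-1)$,
\begin{equation*}
\left(\frac{1}{|B_r(x_0)|}\int_{B_r(x_0)} \tilde u^{\,\varepsilon}\,dx\right)^{1/\varepsilon} \leq C\,\operatorname{ess}\inf_{B_r(x_0)} \tilde u \,+\, C\,\text{Tail}(\tilde u_{-};\,x_0,\,r).
\end{equation*}
The tail term vanishes because $\tilde u_{-} \equiv 0$, while by lower semicontinuity $\operatorname{ess}\inf_{B_r(x_0)} \tilde u \leq \tilde u(x_0) = 0$. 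Hence the estimate collapses to $\int_{B_r(x_0)} \tilde u^{\,\varepsilon}\,dx = 0$, i.e.\ $\tilde u \equiv 0$ a.e.\ on $B_r(x_0)$.

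A covering/connectedness argument then propagates $\tilde u \equiv 0$ to the whole connected component of $\Omega$ containing $x_0$: any two points in that component can be joined by a finite chain of overlapping balls compactly contained in $\Omega$, and the inductive step ``$\tilde u(x_i)=0 \Rightarrow \tilde u \equiv 0$ a.e.\ near $x_i$'' forces every subsequent chain point to be a zero of $\tilde u$. Running this component by component produces $\tilde u \equiv 0$ on $\Omega$, contradicting the assumption $\tilde u \not\equiv 0$. Therefore $U = \Omega$ and $\tilde u > 0$ throughout $\Omega$, which is the claimed dichotomy.

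The main obstacle is exactly this rigidity step, namely turning the pointwise vanishing $\tilde u(x_0) = 0$ into the local a.e.\ vanishing of $\tilde u$ in a neighborhood. The familiar pointwise maximum-principle argument used for $(-\Delta)^s$ is unavailable in the weak setting, so this step genuinely relies on the nonlocal weak Harnack inequality. One must also check carefully that the negative-part tail vanishes; if any positive contribution survived on the right-hand side of the Harnack estimate, the contradiction would be lost and the strict-positivity half of the dichotomy would fail.
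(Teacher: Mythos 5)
The paper's ``proof'' of this corollary is a single citation to Lemma~2.3 of Mosconi--Squassina \cite{mosconi2016nonlocal}; you instead supply an actual argument, which is welcome. Your proof is correct, and it is essentially a reconstruction of the cited result. The two stages are well chosen: (i) feeding the corollary's hypothesis into Lemma~\ref{strong} with $f\equiv 0$, $g = u|_{\mathbb{R}^N\setminus\Omega}=0\geq 0$ is exactly what that lemma is set up for, yielding $u\geq 0$ a.e.\ and the lower semicontinuous representative $\tilde u = u^*$; (ii) the dichotomy rests, as it must in the weak setting, on the nonlocal weak Harnack inequality of Di Castro--Kuusi--Palatucci, and you correctly identify that the negative-part tail vanishes because $u\geq 0$ a.e.\ on all of $\mathbb{R}^N$. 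You also handle the one subtle point correctly: $\tilde u(x_0)=0$ means $\lim_{r\to 0^+}\operatorname{ess\,inf}_{B_r(x_0)}u=0$, and since $r\mapsto\operatorname{ess\,inf}_{B_r(x_0)}u$ is nonincreasing and nonnegative, it is identically zero for every $r$, which is what kills the right-hand side of the Harnack estimate.

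Two small remarks. First, the propagation step can be packaged more cleanly than via a chain of balls: the zero set $Z=\{x\in\Omega:\tilde u(x)=0\}$ is closed in $\Omega$ because $\tilde u$ is lower semicontinuous, and your Harnack step shows it is also open (if $\tilde u(x_0)=0$ then $\tilde u\equiv 0$ a.e.\ on $B_r(x_0)$, hence $\tilde u\equiv 0$ pointwise on $B_{r/2}(x_0)$ by the ess-liminf definition of the representative); since $\Omega$ is a domain, hence connected, $Z$ is $\emptyset$ or $\Omega$, which is the dichotomy. Second, note that the dichotomy as stated requires connectedness of $\Omega$ -- with a disconnected $\Omega$ one could have $u>0$ on one component and $u\equiv 0$ on another -- so your remark ``run this component by component'' is slightly at odds with the conclusion; fortunately the paper does assume $\Omega$ is a domain, so this is moot. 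The only external input you rely on is the weak Harnack inequality, which is not proved in the paper but is standard; citing it explicitly (Di Castro--Kuusi--Palatucci, \emph{Local behavior of fractional $p$-minimizers}) would make the proof fully self-contained.
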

	\begin{proof}
		See Lemma 2.3 of \cite{mosconi2016nonlocal}.
	\end{proof}
	\subsection{Existence of weak solutions}
	We begin the section by considering the problem
	\begin{align}\label{squasinna}
	(-\Delta_p)^s w&=\lambda w^{-\gamma}~\text{in}~\Omega,\nonumber\\
	w&>0~\text{in}~\Omega,\nonumber\\
	w&=0~\text{in}~\mathbb{R}^N\setminus\Omega.
	\end{align}
	We now state an existence result due to \cite{canino2017nonlocal}.
	\begin{lemma}
		Assume $0<\gamma<1$ and $\lambda>0$. Then the problem \eqref{squasinna} has a unique solution, $\underline{u}_{\lambda}\in W_0^{1, p}(\Omega)$, such that for every $K\subset\subset\Omega$, $ess.\underset{K}{\inf}\,\underline{u}_\lambda>0$.
	\end{lemma}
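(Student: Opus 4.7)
The plan is to obtain $\underline{u}_\lambda$ by direct minimization of the energy
$$J_\lambda(w) = \frac{1}{p}\int_Q \frac{|w(x)-w(y)|^p}{|x-y|^{N+ps}}\,dxdy - \frac{\lambda}{1-\gamma}\int_\Omega (w^+)^{1-\gamma}\,dx$$
on $X_0$, and then to bootstrap from the minimizer to a weak solution while establishing interior positivity. First I would check that $J_\lambda$ is well defined, coercive, and weakly lower semicontinuous on $X_0$. Since $0<1-\gamma<1<p$, Hölder together with the embedding $X_0\hookrightarrow L^p(\Omega)$ from Lemma \ref{embedding} gives $\int_\Omega(w^+)^{1-\gamma}\,dx\le|\Omega|^{(p-1+\gamma)/p}\|w\|_p^{1-\gamma}\le C\|w\|^{1-\gamma}$, so coercivity follows at once. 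Weak lower semicontinuity of the Gagliardo seminorm is standard, and strong $L^p$-convergence along a minimizing sequence, combined with a Brezis--Lieb / dominated-convergence argument, handles the concave singular term. Because $J_\lambda(|w|)\le J_\lambda(w)$ one can take the minimizer $\underline{u}_\lambda\ge 0$, and since $J_\lambda(t\phi)<0$ for small $t>0$ and any nonnegative $\phi\not\equiv 0$ (the $t^{1-\gamma}$ term dominates $t^p$ near $0$), one has $\underline{u}_\lambda\not\equiv 0$.

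Next I would establish a pointwise interior lower bound, which is the key to converting minimality into the weak formulation of Definition \ref{defn weak}. Let $\phi_1>0$ be a first eigenfunction of $(-\Delta_p)^s$ on $\Omega$ with eigenvalue $\lambda_1$. For $\epsilon>0$ small enough that $\lambda_1\epsilon^{p-1}\|\phi_1\|_\infty^{p-1+\gamma}\le\lambda$, the function $\epsilon\phi_1$ satisfies
$$(-\Delta_p)^s(\epsilon\phi_1)=\lambda_1\epsilon^{p-1}\phi_1^{p-1}\le\lambda(\epsilon\phi_1)^{-\gamma},$$
so it is a subsolution of \eqref{squasinna}. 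A Fatou-type argument on the variational inequality $J_\lambda(\underline{u}_\lambda+t\psi)\ge J_\lambda(\underline{u}_\lambda)$ for nonnegative $\psi\in X_0$ and $t>0$ (the concave singular term provides the required sign) shows that $\underline{u}_\lambda$ is a supersolution. The weak comparison principle (Lemma \ref{weak comparison}) then yields $\underline{u}_\lambda\ge\epsilon\phi_1$ in $\Omega$, and since $\phi_1$ is continuous and strictly positive in the interior, $\mathrm{ess.}\inf_K\underline{u}_\lambda>0$ for every $K\subset\subset\Omega$.

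With this lower bound in hand, for any $\phi\in X_0$ with $\mathrm{supp}(\phi)\Subset\Omega$ the function $\underline{u}_\lambda+t\phi$ stays bounded below by a positive constant on $\mathrm{supp}(\phi)$ for $|t|$ small, so the map $t\mapsto J_\lambda(\underline{u}_\lambda+t\phi)$ is classically differentiable at $0$ and minimality gives the Euler--Lagrange identity of Definition \ref{defn weak} for all compactly supported test functions. A monotone-convergence/truncation argument, combined with the Hardy-type boundary behavior $\underline{u}_\lambda\gtrsim d(x,\partial\Omega)^s$ inherited from the subsolution $\epsilon\phi_1$, then extends the identity to arbitrary $\phi\in X_0$ and yields $\underline{u}_\lambda^{-\gamma}\phi\in L^1(\Omega)$.

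Finally, uniqueness is a direct consequence of Lemma \ref{weak comparison}: if $u_1$ and $u_2$ are two solutions of \eqref{squasinna}, applying the weak comparison principle once with $(v,u)=(u_1,u_2)$ and once with $(v,u)=(u_2,u_1)$ yields $u_1=u_2$ a.e. The main obstacle in this program is the nondifferentiability of $J_\lambda$ wherever $w$ vanishes; this is precisely what forces the detour through the subsolution $\epsilon\phi_1$ and the comparison principle, since the Euler--Lagrange equation can only be justified after interior strict positivity of the minimizer has been secured.
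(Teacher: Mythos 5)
The paper offers no proof of this lemma at all: it is stated as a known result and attributed to Canino, Montoro, Sciunzi and Squassina \cite{canino2017nonlocal}, so there is no internal argument to compare against. Your proposal is essentially a self-contained reconstruction of that result, and its architecture is sound: direct minimization of the singular energy $J_\lambda$ (coercive and weakly lower semicontinuous since $1-\gamma<p$), Fatou's lemma on the difference quotients of the concave term to obtain the weak supersolution inequality (whose finiteness, tested against a function positive in $\Omega$, also forces $\underline{u}_\lambda>0$ a.e.), comparison with the subsolution $\epsilon\phi_1$ via Lemma \ref{weak comparison} to secure interior positivity, differentiation along admissible directions to recover the Euler--Lagrange identity, and uniqueness by two applications of the same comparison principle. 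The cited reference proceeds differently: it regularizes the nonlinearity (replacing $w^{-\gamma}$ by a truncated, nonsingular one), solves the approximate problems and passes to the limit along a monotone sequence. Your route is shorter given that the paper already supplies the comparison principle and the first eigenfunction; the approximation route avoids the delicate differentiability question for the singular functional altogether.

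A few steps in your sketch need more care. The smallness condition on $\epsilon$ should read $\lambda_1\epsilon^{p-1+\gamma}\|\phi_1\|_\infty^{p-1+\gamma}\le\lambda$ (your version implies it for $\epsilon\le 1$, so this is only a typo). The assertion that $t\mapsto J_\lambda(\underline{u}_\lambda+t\phi)$ is classically differentiable requires $\phi$ bounded as well as compactly supported; one should take $\phi\in C_c^\infty(\Omega)$, or truncate, and only then pass to general $\phi\in X_0$, and this passage is precisely where the fractional Hardy inequality and the Hopf-type bound $\phi_1\ge c\,d(x,\partial\Omega)^s$ enter. That boundary bound is not proved in this paper and must be cited (it follows from the regularity theory of Iannizzotto, Mosconi and Squassina that the paper invokes elsewhere). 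Finally, the lemma as printed places $\underline{u}_\lambda$ in $W_0^{1,p}(\Omega)$, while your construction naturally produces $\underline{u}_\lambda\in X_0$; the latter is what the rest of the paper actually uses, so this discrepancy lies in the statement rather than in your argument.
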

	\noindent With this Lemma in consideration, we now prove our first major theorem.
	\begin{lemma}\label{lambda finite}
		Assume $0<\gamma<1<q \leq p_s^{*}-1$. Then $0<\varLambda<\infty$.
	\end{lemma}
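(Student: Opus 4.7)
The assertion is a conjunction, so we verify $\Lambda>0$ and $\Lambda<\infty$ by two different mechanisms.

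\emph{Lower bound $\Lambda>0$.} It suffices to exhibit some $\lambda_0>0$ at which \eqref{main} admits a weak solution; monotonicity of $\lambda u^{-\gamma}+u^{q}$ in $\lambda$ then yields existence for every $\lambda\in(0,\lambda_0)$ using the same sub/super-solution pair, so $\Lambda\geq\lambda_0$. The natural subsolution is $\underline u:=\underline u_\lambda$ supplied by the previous lemma, since
\begin{equation*}
(-\Delta_p)^s\underline u=\lambda\underline u^{-\gamma}\leq \lambda\underline u^{-\gamma}+\underline u^{q},
\end{equation*}
so Definition \ref{defn subsoln} is met. For a supersolution $\bar u\in X_0$, for $\lambda$ small we construct an element dominating $\underline u$ pointwise whose $L^{\infty}$-norm is still small enough that the superlinear contribution $\bar u^{q}$ is absorbed; the key quantitative input is the scaling $\|\underline u_\lambda\|_{\infty}=O(\lambda^{1/(p-1+\gamma)})$, which together with the Sobolev embedding (Lemma \ref{embedding}) keeps the construction consistent when $\lambda$ is sufficiently small. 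The Weak Comparison Principle (Lemma \ref{weak comparison}) yields $\underline u\leq \bar u$, and a standard monotone iteration on the ordered interval $[\underline u,\bar u]\subset X_0$ produces a weak solution in the sense of Definition \ref{defn weak}.

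\emph{Upper bound $\Lambda<\infty$.} Minimizing the one-variable function $h(t):=(\lambda t^{-\gamma}+t^{q})/t^{p-1}$ for $t>0$ gives the sharp uniform lower bound
\begin{equation*}
\lambda t^{-\gamma}+t^{q}\;\geq\; C_{0}\,\lambda^{(q-p+1)/(q+\gamma)}\,t^{p-1}\qquad(t>0),
\end{equation*}
with $C_{0}=C_{0}(p,q,\gamma)>0$ and positive exponent (since $q>p-1$). Hence any weak solution $u$ of \eqref{main} satisfies $(-\Delta_p)^{s}u\geq C_{0}\lambda^{(q-p+1)/(q+\gamma)}\,u^{p-1}$ weakly. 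Let $\lambda_{1}$ denote the first Dirichlet eigenvalue of $(-\Delta_p)^{s}$ on $\Omega$, with positive eigenfunction $\phi_{1}\in X_0$ (see \cite{franzina2013fractional,lindgren2014fractional}). Testing with $\phi_{1}^{p}/(u+\varepsilon)^{p-1}$, invoking the discrete Picone-type inequality for $(-\Delta_p)^{s}$, and letting $\varepsilon\to 0^{+}$ yields
\begin{equation*}
\lambda_{1}\;\geq\; C_{0}\,\lambda^{(q-p+1)/(q+\gamma)},
\end{equation*}
which is violated once $\lambda$ is large. No weak solution can exist for such $\lambda$, so $\Lambda<\infty$.

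\emph{Main obstacle.} The delicate points are twofold: (i) in the lower bound, designing the supersolution near $\partial\Omega$ so that its singular contribution $\bar u^{-\gamma}$ does not overwhelm the diffusion $(-\Delta_p)^{s}\bar u$, which requires matching the boundary decay $\bar u(x)\sim d(x,\partial\Omega)^{s}$ known from boundary regularity for singular fractional $p$-Laplacian equations, and (ii) in the upper bound, replacing the self-adjointness that trivializes the $p=2$ case by a genuine nonlocal Picone inequality valid for general $p\in(1,\infty)$. Both obstacles are resolved using machinery already present in the cited literature, but it is (ii) that is the real technical heart of the argument since the eigenvalue comparison $\lambda_{1}\geq C_{0}\lambda^{(q-p+1)/(q+\gamma)}$ is the mechanism that forces non-existence.
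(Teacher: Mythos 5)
Your two-sided split matches the structure of the paper's proof, but the mechanisms you use on each side differ in instructive ways. For $\Lambda>0$, the paper does not use a sub/super-solution sandwich at all: it introduces a truncated functional $\overline{I}_\lambda$, proves that for $r$ and $\lambda$ small one has $\min_{\partial B_r}\overline{I}_\lambda>0$ while $\inf_{\overline{B}_r}\overline{I}_\lambda<0$, and then runs a Brezis--Lieb splitting along a minimizing sequence to produce a nontrivial minimizer which (after a comparison step) solves \eqref{main}. Your route via the subsolution $\underline{u}_\lambda$ plus a supersolution obtained from the scaling $\|\underline{u}_\lambda\|_\infty\sim\lambda^{1/(p-1+\gamma)}$ is a legitimate alternative and in fact closer in spirit to Lemma~\ref{subsuperlemma} which follows; the scaling heuristic works because $q>p-1$ makes $(q+\gamma)/(p-1+\gamma)>1$, so for small $\lambda$ the bound $\bar u^{q+\gamma}\le\lambda$ can be arranged. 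You leave the actual supersolution implicit — e.g.\ $\bar u=\underline{u}_{2\lambda}$ works, since then $(-\Delta_p)^s\bar u-\lambda\bar u^{-\gamma}-\bar u^q=\lambda\bar u^{-\gamma}-\bar u^q\ge 0$ as soon as $\|\bar u\|_\infty^{q+\gamma}\le\lambda$ — so if you flesh this out it is fine, but as written it is a sketch rather than a proof.

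For $\Lambda<\infty$ your approach is not only different but strictly more careful than the paper's. The paper tests the weak form of \eqref{main} with $\phi_1$ and then writes $\langle(-\Delta_p)^s u,\phi_1\rangle=\int_\Omega(-\Delta_p)^s\phi_1\,u\,dx=\lambda_1\int_\Omega u|\phi_1|^{p-2}\phi_1\,dx$; the first equality amounts to self-adjointness of $(-\Delta_p)^s$, which is true only when $p=2$. Your replacement — minimize $h(t)=(\lambda t^{-\gamma}+t^q)/t^{p-1}$ to get the uniform bound $\lambda t^{-\gamma}+t^q\ge C_0\lambda^{(q-p+1)/(q+\gamma)}t^{p-1}$ (the exponent is correct and positive precisely because $q>p-1$), then compare with $\lambda_1$ via the nonlocal Picone inequality using the test function $\phi_1^p/(u+\varepsilon)^{p-1}$ — is exactly the mechanism one needs to make the eigenvalue comparison rigorous for all $p\in(1,\infty)$. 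So on the upper bound your proposal repairs a genuine gap in the paper's own argument; on the lower bound it substitutes a monotone-iteration proof for the paper's variational one, which is a matter of taste once you supply the explicit supersolution.
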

	\begin{proof}
		Define,
		\[   
		\overline{f}(x,t) = 
		\begin{cases}
		f(t), &~\text{if}~t>\overline{u}_{\lambda}\\
		f(\underline{u}_{\lambda}),&~\text{if}~t\leq \underline{u}_{\lambda}
		\end{cases}\]
		where, $f(u)=\frac{\lambda}{u^{\gamma}}+u^q$ and $\underline{u}_{\lambda}$ is the solution to \eqref{squasinna}. Let $\overline{F}(x,s)=\int_{0}^{s}\overline{f}(x,t)ds$, $\lambda>0$. 
		Define a function $\overline{I}_{\lambda}:X_0\rightarrow\mathbb{R}$ as follows.
		\begin{align}\label{fang_fnal}
		\overline{I}_{\lambda}(u)&=\frac{1}{p}\int_{Q}\frac{|u(x)-u(y)|^p}{|x-y|^{N+ps}}dxdy-\int_{\Omega}\overline{F}(x,u)dx.
		\end{align}
		The functional is $C^1$ (refer to Lemma \ref{C1} in the Appendix) and weakly lower semicontinuous. From the H\"{o}lder's inequality and Lemma (\ref{embedding}), we obtain
		\begin{align}
		\overline{I}_{\lambda}(u)&=\frac{1}{p}\|u\|^p-\int_{\Omega}\overline{F}(x,u)dx\nonumber\\
		&\geq \frac{1}{p}\|u\|^p-\lambda c_1\|u\|^{1-\gamma}-c_2\|u\|^{q+1}\nonumber
		\end{align}
		where, $c_1$, $c_2$ are constants. We choose $r>0$ small enough and $\lambda>0$ sufficiently small so that the term $\frac{1}{2}\|u\|^2-\lambda c_1\|u\|^{1-\gamma}-c_2\|u\|^{q+1}>0$. Thus we have a pair of $(\lambda, r)$ such that 
		\begin{align}
		\min_{u\in\partial B_r}\{\overline{I}_{\lambda}(u)\}&>0.\nonumber
		\end{align} 
		Now, for $\phi > 0\in X_0$, we have
		\begin{align}\label{inf_eqn}
		\overline{I}_{\lambda}(\phi)&=\frac{|t|^p}{p}\|\phi\|^p-\frac{|t|^{1-\gamma}}{1-\gamma}\int_{\Omega}|\phi|^{1-\gamma}dx-\frac{|t|^{q+1}}{q+1}\int_{\Omega}|\phi|^{q+1}dx.
		\end{align}
		The above equation (\ref{inf_eqn}) holds since $1-\gamma<1<q+1$. Thus $\overline{I}_{\lambda}(t\phi)\rightarrow -\infty$ as $t\rightarrow\infty$.  Therefore we have $\underset{||u||_{X_0}\leq r}{\inf}\overline{I}_{\lambda}(u)=c<0$. By the definition of infimum, we consider a minimizing sequence $\{u_n\}$ for $c$ . By the reflexivity of $X_0$ there exists a subsequence, still denoted by $\{u_n\}$, which weakly converges to, say, $u$.  such that 
		\begin{align}
		u_n&\rightarrow u~\text{weakly in}~L^{p_s^*}(\Omega)\nonumber\\
		u_n&\rightarrow u~\text{strongly in }~L^r(\Omega)~\text{for}~1\leq r < p_s^*\nonumber\\
		u_n&\rightarrow u~\text{pointwise a.e. in}~\Omega.
		\end{align}
		Therefore, by the Brezis-Lieb lemma \cite{brezis1983relation}, we get
		\begin{align}\label{lieb0}
		\|u_n\|^p&=\|u\|^p+\|u_n-u\|^p+o(1)\nonumber\\
		\|u_n\|_{q+1}^{q+1}&=\|u\|_{q+1}^{q+1}+\|u_n-u\|^{q+1}+o(1).
		\end{align}
		On using the H\"{o}lder's inequality and passing the limit $n\rightarrow\infty$, we obtain
		\begin{align}\label{lieb1}
		\int_{\Omega}u_n^{1-\gamma}dx&\leq\int_{\Omega}u^{1-\gamma}dx+\int_{\Omega}|u_n-u|^{1-\gamma}dx\nonumber\\
		&\leq \int_{\Omega}u^{1-\gamma}dx+c_1\|u_n-u\|_{p}^{1-\gamma}\nonumber\\
		&=\int_{\Omega}u^{1-\gamma}dx+o(1).
		\end{align}
		Therefore, on similar lines, we have
		\begin{align}\label{lieb2}
		\int_{\Omega}u^{1-\gamma}dx&\leq\int_{\Omega}u_n^{1-\gamma}dx+\int_{\Omega}|u_n-u|^{1-\gamma}dx\nonumber\\
		&\leq \int_{\Omega}u_n^{1-\gamma}dx+c_1\|u_n-u\|_{p}^{1-\gamma}\nonumber\\
		&=\int_{\Omega}u^{1-\gamma}dx+o(1).
		\end{align}
		Combining \eqref{lieb1} and \eqref{lieb2}, we obtain the following
		\begin{align}
		\int_{\Omega}u_n^{1-\gamma}dx&=\int_{\Omega}u^{1-\gamma}dx+o(1).
		\end{align}
		Thus, clubbing equations \eqref{lieb0}, \eqref{lieb1} and \eqref{lieb2}, we deduce that 
		\begin{align}\label{lieb4}
		\overline{I}_{\lambda}(u_n)&=\overline{I}_{\lambda}(u)+\frac{1}{p}\|u_n-u\|^p-\frac{1}{q+1}\|u_n-u\|_{q+1}^{q+1}+o(1).
		\end{align}
		We also observe from \eqref{lieb0} that  for $n$ sufficiently large $u$, $u_n-u\in B_r$ and $\frac{1}{p}\|u_n-u\|^p-\frac{1}{q+1}\|u_n-u\|_{q+1}^{q+1}\geq o(1)$. Since $r>0$ was chosen to be sufficiently small, we have
		\begin{align}
		\frac{1}{p}\|u_n-u\|^p-\frac{1}{q+1}\|u_n-u\|_{q+1}^{q+1}&> 0~\text{on}~\partial B_r\nonumber\\
		\frac{1}{p}\|u_n-u\|^p-\frac{1}{q+1}\|u_n-u\|_{q+1}^{q+1}&\geq 0~\text{in}~B_r.
		\end{align}
		As a consequence, we can conclude that 
		\begin{align}
		\frac{1}{p}\|u_n-u\|^p-\frac{1}{q+1}\|u_n-u\|_{q+1}^{q+1}&\geq o(1).
		\end{align}
		Therefore on passing the limit $n\rightarrow\infty$ to \eqref{lieb4}, we obtain $\overline{I}_{\lambda}(u_n)\geq \overline{I}_{\lambda}(u)+o(1)$. Since $\underset{\|u\|_{X_0}\leq r}{\inf}\overline{I}_{\lambda}(u)=c$ we have $u\neq 0$ which is a minimizer of $\overline{I}_{\lambda}$ over $X_0$. Thus we have 
		\begin{align}
		(-\Delta_p)^s u&=f(x,u)~\text{in}~\Omega,\nonumber\\
		u&>0~\text{in}~\Omega,\nonumber\\
		u&=0~\text{in}~\mathbb{R}^N\setminus\Omega.
		\end{align}  
		By the comparison principle (refer to lemma \ref{weak comparison} in the Appendix) of fractional $p$-Laplacian we conclude that $\overline{u}_{\lambda}\leq u$ in $\Omega$. Thus $\Lambda>0$ since the choice $\lambda>0$ has been made.\\
		We now claim that $\varLambda<\infty$. We let $\lambda_1$ to denote the principal eigenvalue of $(-\Delta_p)^s$ in $\Omega$ and let $\phi_1>0$ be the associated eigenfunction. In other words, we have
		\begin{align}
		(-\Delta_p)^s\phi_1&=\lambda_1|\phi_1|^{p-2}\phi_1~\text{in}~\Omega,\nonumber\\
		\phi_1&>0~\text{in}~\Omega,\nonumber\\
		\phi&=0~\text{in}~\mathbb{R}^N\setminus\Omega. 
		\end{align}      
		We choose, $\phi_1$ as a test function in the weak formulation of \eqref{main}, to get
		\begin{align}\label{contradiction}
		\lambda_1\int_{\Omega}u|\phi_1|^{p-2}\phi_1dx&=\int_{\Omega}(-\Delta_p)^s\phi_1 u dx\nonumber\\
		&=\int_{\Omega}\left(\frac{\lambda}{u^{\gamma}}+u^q\right)\phi_1dx.
		\end{align}
		Let $\tilde{\varLambda}$ be any constant such that $\tilde{\Lambda}t^{-\gamma}+t^q>p\lambda_1 t$, $\forall t>0$. This leads to a contradiction to the equation \eqref{contradiction}. Hence we conclude that $\Lambda<\infty$.
	\end{proof}
	\begin{lemma}\label{subsuperlemma}
		Let $0<\gamma<1$. Suppose that $\underline{u}$ is a weak subsolution while $\overline{u}$ is a weak supersolution to the problem \eqref{main} such that $\underline{u}\leq\overline{u}$, then for every $\lambda\in0,\Lambda)$ there exists a weak solution $u_{\lambda}$ such that $\underline{u}\leq u_{\lambda} \leq\overline{u}$ a.e. in $\Omega$. This $u_{\lambda}$ is a local minimizer of $\overline{I}_{\lambda}$ defined over $X_0$.
	\end{lemma}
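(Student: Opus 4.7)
I plan to proceed by the classical sub- and super-solution method adapted to the singular nonlocal setting. First I would introduce a two-sided truncation of the nonlinearity by setting
\begin{equation*}
\tilde f(x,t) = \begin{cases} \lambda\,\underline u(x)^{-\gamma} + \underline u(x)^q, & t \le \underline u(x),\\ \lambda\, t^{-\gamma} + t^q, & \underline u(x) \le t \le \overline u(x),\\ \lambda\,\overline u(x)^{-\gamma} + \overline u(x)^q, & t \ge \overline u(x),\end{cases}
\end{equation*}
together with $\tilde F(x,s) = \int_0^s \tilde f(x,\tau)\,d\tau$ and the truncated functional $\tilde I_\lambda(u) = \tfrac{1}{p}\|u\|^p - \int_\Omega \tilde F(x,u)\,dx$ on $X_0$. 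Because $\underline u \ge c\, d(\cdot,\partial\Omega)^s$ by the lemma on \eqref{squasinna} and $\overline u \in X_0$, the function $\tilde f(\cdot,t)$ is bounded above by an $L^\infty$ function of $x$ uniformly in $t$, so $\tilde F$ grows at most linearly in $t$. This makes $\tilde I_\lambda$ of class $C^1$, coercive on $X_0$, and weakly lower semicontinuous; the direct method then yields a global minimizer $u_\lambda \in X_0$.

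Second, I would establish the trapping $\underline u \le u_\lambda \le \overline u$ a.e.\ in $\Omega$ by arguing in the spirit of Lemma \ref{weak comparison}. Testing the Euler--Lagrange equation for $\tilde I_\lambda$ against $\phi = (\underline u - u_\lambda)^+ \in X_0$, using that $\underline u$ is a subsolution of \eqref{main} with the same test function, and subtracting, one gets on $\{u_\lambda < \underline u\}$ that $\tilde f(x,u_\lambda) = \lambda \underline u^{-\gamma} + \underline u^q$, so that the singular and power parts cancel and one is left with
\begin{equation*}
0 \ge \langle (-\Delta_p)^s \underline u - (-\Delta_p)^s u_\lambda,\, (\underline u - u_\lambda)^+\rangle \ge 0
\end{equation*}
by the $(p-1)$-monotonicity identity \eqref{identity}. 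Hence $(\underline u - u_\lambda)^+ \equiv 0$. The symmetric argument with $\phi = (u_\lambda - \overline u)^+$ and the supersolution inequality for $\overline u$ gives the upper bound. Once the trapping holds, $\tilde f(x,u_\lambda) = \lambda u_\lambda^{-\gamma} + u_\lambda^q$ pointwise, the Euler--Lagrange equation coincides with Definition \ref{defn weak}, and the integrability $u_\lambda^{-\gamma}\phi \in L^1(\Omega)$ for $\phi\in X_0$ is inherited from $u_\lambda \ge \underline u \ge c\, d(\cdot,\partial\Omega)^s$.

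Third, to upgrade $u_\lambda$ to a local minimizer of $\overline I_\lambda$ on $X_0$, I would exploit that on the order interval $[\underline u, \overline u]$ the functionals $\tilde I_\lambda$ and $\overline I_\lambda$ agree, and that truncation into the interval decreases $\overline I_\lambda$. Concretely, for $w\in X_0$ with $\|w-u_\lambda\|$ small, set $\hat w = \min\{\max\{w,\underline u\},\overline u\}$; then $\hat w$ lies in the order interval, so $\overline I_\lambda(\hat w) = \tilde I_\lambda(\hat w) \ge \tilde I_\lambda(u_\lambda) = \overline I_\lambda(u_\lambda)$. The remaining contributions from $\{w<\underline u\}\cup\{w>\overline u\}$ to $\overline I_\lambda(w) - \overline I_\lambda(\hat w)$ are shown to be nonnegative by a direct sign analysis of the integrand, using the monotonicity of $t\mapsto t^q$ and $t\mapsto -t^{-\gamma}$ together with the compact embeddings in Lemma \ref{embedding} to control the singular term for $w$ close to $u_\lambda$.

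The main obstacle I anticipate is the comparison step: the singular map $t\mapsto \lambda t^{-\gamma}$ is nonincreasing (which is what drives the trapping) but fails to be Lipschitz at the origin, so one must justify carefully that $(\underline u - u_\lambda)^+$ is an admissible test function in the singular weak formulation. The a priori lower bound $\underline u \ge c\, d(\cdot,\partial\Omega)^s$ from the companion problem \eqref{squasinna} is precisely what secures the integrability $\underline u^{-\gamma}(\underline u - u_\lambda)^+ \in L^1(\Omega)$, and the $(p-1)$-monotonicity identity \eqref{identity} is the algebraic engine that forces the fractional $p$-Laplacian contribution to be nonnegative.
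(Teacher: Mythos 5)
Your first two steps (truncation $\tilde f$, direct method for a global minimizer $u_\lambda$ of $\tilde I_\lambda$, and trapping $\underline u\le u_\lambda\le\overline u$ by testing with $(\underline u-u_\lambda)^+$, $(u_\lambda-\overline u)^+$ and the $(p-1)$-monotonicity identity) match the paper's strategy in substance, and if anything your comparison step is written more carefully than the paper's, which informally manipulates $(-\Delta_p)^s(\overline u-u_\lambda)$ as if the operator were linear.

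The gap is in your third step, the upgrade to a local minimizer of $\overline I_\lambda$ on $X_0$. You assert that for $w$ near $u_\lambda$ in $X_0$, truncating $w$ into the order interval, $\hat w=\min\{\max\{w,\underline u\},\overline u\}$, decreases $\overline I_\lambda$, and that the discrepancy supported on $\{w<\underline u\}\cup\{w>\overline u\}$ has a favourable sign. This is not justified and is not obviously true: for the Gagliardo seminorm the correct lattice identity is of the form $[\min(u,v)]_{s,p}^p+[\max(u,v)]_{s,p}^p\le[u]_{s,p}^p+[v]_{s,p}^p$, which controls the \emph{sum} of the energies of the lattice operations by the sum of the original energies, not the energy of a two-sided truncation by non-constant barriers by the energy of $w$ alone. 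Already for the local $p$-Laplacian, truncation by a non-constant obstacle does not monotonically decrease $\int|\nabla w|^p$; in the nonlocal case the cross terms between $\{w>\overline u\}$ and its complement make a ``direct sign analysis of the integrand'' genuinely delicate. Moreover, the singular term $-\frac{\lambda}{1-\gamma}\int(w^+)^{1-\gamma}$ is \emph{concave} in $w$, so it need not respond favourably to truncation either; this is precisely the term the paper is most careful about.

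The paper takes an entirely different route here, and it is essential. It first invokes the global weighted H\"older regularity of Iannizzotto--Mosconi--Squassina (Theorem~4.4 of \cite{iannizzotto2014global}) to place $u_\lambda$ in $C^0_s(\overline\Omega)$, then uses a Hopf-type boundary lemma (Lemma~2.7 of \cite{iannizzotto2015hs}) to obtain \emph{strict} separation $\frac{\overline u-u_\lambda}{\delta^s}\ge c_2>0$ and $\frac{u_\lambda-\underline u}{\delta^s}\ge c_2>0$ on $\overline\Omega$. On a small ball $B^\delta_{c_2/2}(u_\lambda)$ in the $\delta^s$-weighted $C^0$ topology, any $u$ satisfies $\underline u<u<\overline u$, so $\tilde I$ and $\overline I_\lambda$ coincide there, making $u_\lambda$ a local minimizer in the $C^0_s$ topology. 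The crucial final ingredient is Theorem~1.1 of \cite{iannizzotto2015hs} (the nonlocal ``$W^{s,p}_0$ versus $C^0_s$ local minimizers'' theorem, whose singular analogue is stated in this paper as Theorem~3.9/\ref{versus}), which transfers the local minimality to the $X_0$ topology. Without this passage through the weighted H\"older topology and the Brezis--Nirenberg-type transfer theorem, the local minimality claim is not established; this is exactly why the paper devotes an entire subsection to proving the ``$C^1$ versus $W^{s,p}$'' result. You should replace your third step with this machinery, or else supply a complete proof of a lattice inequality strong enough to justify the truncation argument, which I do not believe exists in the form you need.
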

	\begin{proof}
		We begin by showing that  $\underline{u}\leq\overline{u}$. For this let us consider the problem \eqref{main}. Let $\mu\in(0,\Lambda)$. By the definition of $\Lambda$, there exists $\lambda_0\in(\mu,\Lambda)$ such that \eqref{main} with $\lambda=\lambda_0$ has a solution by the Lemma \ref{lambda finite}, say $u_{\lambda_0}$. Then $\overline{u}=u_{\lambda_0}$ happens to be a supersolution of the problem \eqref{main}. Consider the function $\phi_1$ an eigenfunction of $(-\Delta_p)^s$ corresponding to the smallest eigenvalue $\lambda_1$. Thus $\phi_1\in L^{\infty}(\Omega)$ \cite{lindgren2014fractional} and 
		\begin{align}
		(-\Delta_p)^s\phi_1&=\lambda_1|\phi_1|^{p-2}\phi_1,\nonumber\\ \phi_1&>0~\text{in}~\Omega,\nonumber\\
		\phi_1&=0~\text{in}~\mathbb{R}^N\setminus\Omega.
		\end{align}
		Choose, $t>0$ such that $t\phi_1\leq \overline{u}$ and $t^{p+q-1}\phi_1^{p+q-1}\leq \frac{\lambda}{\lambda_1}$. On defining $\underline{u}=t\phi_1$ we have
		\begin{align}
		(-\Delta_p)^s\underline{u}&=\lambda_1t^{p-1}\phi_1^{p-1}\nonumber\\
		&\leq \lambda t^{-q}\phi_1^{-q}+t^{\alpha}\phi_1^{\alpha}\nonumber\\
		&=\lambda \underline{u}^{-q}+\underline{u}^{\alpha},
		\end{align}
		i.e., $\underline{u}$ is a subsolution of the problem \eqref{main}. This implies that $\underline{u}\leq\overline{u}$. \\
		We now show the existence of a $u_{\lambda}$. For this, we define 
		\[   
		\tilde{f}(x,t) = 
		\begin{cases}
		f_\lambda(x, \overline{u}), &~\text{if}~t\geq\overline{u}\\
		f_\lambda(x, t), &~\text{if}~\underline{u}\leq t\leq\overline{u}\\
		f_\lambda(x, \underline{u}),&~\text{if}~t\leq \underline{u}.
		\end{cases}\]
		We further define $\tilde{I}(u)=\frac{1}{p}\|u\|^p-\int_{\Omega}\tilde{F}(x,u)dx$, where $\tilde{F}(x,t)=\int_{0}^{t}\tilde{f}(x,s)ds$. Let $u_\lambda$ be a global minimizer of the functional $\tilde{I}$ due to the definition of $\tilde{f}$. We first observe that the $C^1$ functional $\tilde{I}$ is sequentially weakly lower semicontinuous and coercive. This can be seen from the dominated convergence theorem and the Sobolev embedding. Due to the monotonicity of $\tilde{f}$ we have,
		\begin{align}
		(-\Delta_p)^s(\overline{u}-u_\lambda)&\geq f(x,\overline{u})-\tilde{f}(x,\overline{u})\nonumber\\
		&\geq 0~\text{in}~\Omega
		\end{align}
		\noindent along with $\overline{u}-u_\lambda\geq 0$, in $\mathbb{R}^N\setminus\Omega$.  We now refer to a result proved in the Lemma \ref{strong} in the Appendix that $\overline{u}-u_\lambda\geq 0$ and is a weak supersolution to the problem \eqref{main}. On using the Lemma 2.7 in \cite{iannizzotto2015hs}, we have $\frac{\bar{u}-u_\lambda}{\delta^s}\geq c_2>0$ in $\bar{\Omega}$. Similarly we prove $\frac{u_\lambda-\underline{u}}{\delta^s}\geq c_2>0$ in $\bar{\Omega}$. Then $u_\lambda$ is a weak solution to the problem \eqref{main}.\\
		Now, we prove that $u_\lambda$ is a local minimizer of $\overline{I}_\lambda$. Due to Theorem 4.4 in \cite{iannizzotto2014global}, we have $u_\lambda\in C_{s}^0(\Omega)$. Thus for any $u\in B_{\frac{c_2}{2}}^{\delta}(u_\lambda)$, we obtain $\frac{\overline{u}-u}{\delta^s}=\frac{\overline{u}-u_\lambda}{\delta^s}+\frac{u_\lambda-u}{\delta^s}\geq c_2-\frac{c_2}{2}$ in $\bar{\Omega}$. Hence, by the maximum principle we get $\overline{u}-u_\lambda>0$ in $\Omega$. using similar argument, it follows that $\underline{u}-u_\lambda>0$ in $\Omega$. Therefore, $\tilde{I}$ and $\bar{I}_\lambda$ becomes identical over $B_{\frac{c_2}{2}}^{\delta}(u_\lambda)\cap X_0$. Further we have $u_\lambda$ is local minimizer of $\bar{I}_\lambda$ in $C_{s}^0(\Omega)\cap X_0$. Hence, Theorem 1.1 in \cite{iannizzotto2015hs}, implies that $u_\lambda$ is a local minimizer of $\bar{I}_\lambda$.
	\end{proof}
	\noindent We now prove the following theorem.
	\begin{theorem}\label{solnLambda}
		The problem in \eqref{main} has at least one solution if $\lambda=\Lambda$.
	\end{theorem}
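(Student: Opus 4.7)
The strategy is the classical extremal-parameter limiting argument. Pick any strictly increasing sequence $\lambda_n \nearrow \Lambda$ and let $u_n \in X_0$ denote the minimal positive weak solution of \eqref{main} at $\lambda = \lambda_n$, provided by Theorem \ref{main thm}(i) and Lemma \ref{subsuperlemma}. I aim to show that, up to a subsequence, $u_n$ converges strongly in $X_0$ to a weak solution $u_\Lambda$ of \eqref{main} at $\lambda = \Lambda$.

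First, I will establish monotonicity. By the Weak Comparison Principle (Lemma \ref{weak comparison}) applied to $(u_n, u_{n+1})$ together with the minimality of each $u_n$, the sequence $\{u_n\}$ is pointwise non-decreasing in $n$, and hence bounded below a.e. in $\Omega$ by the strictly positive first term $u_1$. Next I will obtain a uniform $X_0$-bound. Testing the equation with $u_n$ yields the Nehari identity
\begin{equation*}
\|u_n\|^p = \lambda_n \int_\Omega u_n^{1-\gamma}\,dx + \int_\Omega u_n^{q+1}\,dx.
\end{equation*}
Pairing this with the energy inequality $\overline{I}_{\lambda_n}(u_n) \leq 0$ (which follows from $u_n$ being a global minimizer of the cut-off functional, as in Lemma \ref{lambda finite}) via the Ambrosetti--Rabinowitz-style combination
\begin{equation*}
\left(\tfrac{1}{p} - \tfrac{1}{q+1}\right)\|u_n\|^p = \overline{I}_{\lambda_n}(u_n) + \lambda_n\left(\tfrac{1}{1-\gamma} - \tfrac{1}{q+1}\right)\int_\Omega u_n^{1-\gamma}\,dx,
\end{equation*}
together with H\"{o}lder's inequality, $\lambda_n \leq \Lambda$, and Lemma \ref{embedding}, gives $\|u_n\|^p \leq C\|u_n\|^{1-\gamma}$, hence $\|u_n\| \leq C$ uniformly in $n$ because $p > 1-\gamma$.

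By reflexivity and Lemma \ref{embedding}, I extract a subsequence with $u_n \rightharpoonup u_\Lambda$ in $X_0$, $u_n \to u_\Lambda$ strongly in $L^r(\Omega)$ for every $r \in [1, p_s^*)$, and $u_n \to u_\Lambda$ a.e. in $\Omega$. Monotonicity upgrades the a.e. convergence to monotone convergence and yields the lower bound $u_\Lambda \geq u_1 > 0$ on compact subsets of $\Omega$. I now pass to the limit in the weak formulation against an arbitrary $\phi \in X_0$: the singular term $\int_\Omega u_n^{-\gamma}\phi\,dx \to \int_\Omega u_\Lambda^{-\gamma}\phi\,dx$ by monotone convergence, integrability of the limit being secured by the Hopf-type bound $u_n \geq c\,\mathrm{dist}(\cdot,\partial\Omega)^s$ inherited from Lemma \ref{subsuperlemma}. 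In the subcritical range $q < p_s^* - 1$, $\int_\Omega u_n^q \phi\,dx$ converges by strong $L^{q+1}$-convergence. For the nonlocal bilinear form, I will establish strong convergence $u_n \to u_\Lambda$ in $X_0$ by testing the difference of the two weak equations against $u_n - u_\Lambda$ and invoking the Simon-type uniform monotonicity inequality for the fractional $p$-Laplacian (as already exploited in the proof of Lemma \ref{weak comparison}); the nonlocal term then passes to the limit immediately.

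The principal obstacle is the critical case $q = p_s^* - 1$, where the Sobolev embedding loses compactness and one cannot a priori rule out concentration of $u_n^{p_s^*}$ at finitely many points of $\overline{\Omega}$. Handling this requires a Lions-type concentration-compactness analysis adapted to the fractional $p$-Laplacian, using in an essential way that the limiting energy $\overline{I}_\Lambda(u_\Lambda)$ stays strictly below the critical threshold $\tfrac{s}{N}S^{N/(ps)}$ (via comparison against the energies at subcritical $\lambda_n$), so that no bubble can form. Once strong $X_0$-convergence is secured, $u_\Lambda$ satisfies Definition \ref{defn weak} with $\lambda = \Lambda$, completing the argument.
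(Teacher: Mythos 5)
Your proposal follows the same overall strategy as the paper's proof: take $\lambda_n \nearrow \Lambda$, establish a uniform bound $\sup_n \|u_n\|_{X_0} < \infty$ by pairing the Nehari identity with the energy bound, then extract a weak limit. Your derivation of the uniform bound $\|u_n\|^p \leq C_1 + C_2\|u_n\|^{1-\gamma}$ is the same computation as the paper's inequality \eqref{finalineq} (just arranged slightly differently), and the conclusion $\|u_n\| \leq C$ is identical.

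Where you diverge is that you actually attempt to justify the passage to the limit in the weak formulation, which the paper's proof silently skips: after establishing weak convergence the paper simply asserts ``this establishes that $u$ is a weak solution corresponding to $\Lambda$'' with no verification that the singular term, the power term, or the nonlocal bilinear form converge, nor that the limit satisfies $u^{-\gamma}\phi \in L^1(\Omega)$. Your additional steps---monotonicity via the comparison principle, the Hopf-type lower bound $u_n \geq c\,d(\cdot,\partial\Omega)^s$ to control the singular term by monotone/dominated convergence, and the Simon monotonicity inequality to upgrade weak to strong $X_0$-convergence in the subcritical range---are the right way to fill in what the paper leaves implicit, and strengthen the argument. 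One caveat: your claim that $\overline{I}_{\lambda_n}(u_n) \leq 0$ presumes the minimal solution is the global minimizer of the truncated functional, which is not quite established; but since the argument only needs a uniform upper bound $\overline{I}_{\lambda_n}(u_n)\le B$ (as the paper states), this is cosmetic.

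The genuine remaining gap is the critical case $q = p_s^*-1$, which you correctly flag but do not resolve. You assert that the limiting energy can be shown to lie strictly below $\tfrac{s}{N}S^{N/(ps)}$ ``via comparison against the energies at subcritical $\lambda_n$,'' but this does not follow from what you have: the uniform bound only gives $\overline{I}_{\lambda_n}(u_n)\le B$, not a strict sub-threshold estimate, and a priori the weak limit could lose mass at a concentration point. Making this rigorous would require either a concentration-compactness decomposition together with an energy identity for the bubbles, or an argument showing the $u_n$ converge in $L^{p_s^*}$ via their monotonicity and a uniform $L^\infty$-bound. Notably, this is also a gap in the paper's own proof, which never addresses the critical exponent at all; your proposal is at least explicit about the difficulty even if it does not close it.
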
                    
	\begin{proof}
		Consider an increasing sequence $\{\lambda_n\}$, which converges to $\Lambda$, as $n\rightarrow\infty$. Let $u_n=u_{\lambda_n}$ be a weak solution to the problem \eqref{main} for $\lambda=\lambda_n$. Thus 
		\begin{align}\label{bdd1}
		\int_{Q}&\frac{|u_n(x)-u_n(y)|^{p-2}(u_n(x)-u_n(y))(\phi(x)-\phi(y))}{|x-y|^{N+ps}}dxdy\nonumber\\
		&\hspace{1cm}-\lambda_n\int_{\Omega}u_n^{-\gamma}\phi dx-\int_{\Omega}u_n^q\phi dx=0,~\forall~\phi\in X_0.
		\end{align}   
		Hence putting $\phi=u_n$, we have
		\begin{align}\label{bdd2}
		\int_{Q}\frac{|u_n(x)-u_n(y)|^{p}}{|x-y|^{N+ps}}dxdy&-\lambda_n\int_{\Omega}u_n^{1-\gamma} dx-\int_{\Omega}u_n^{q+1}dx=0.
		\end{align}   
		From the Lemma \ref{subsuperlemma}, the energy functional
		\begin{align}\label{energy}
		I(u_n)&=\frac{1}{p}\int_{Q}\frac{|u_n(x)-u_n(y)|^{p}}{|x-y|^{N+ps}}dxdy-\frac{\lambda_n}{1-\gamma}\int_{\Omega}u_n^{1-\gamma} dx-\frac{1}{q+1}\int_{\Omega}u_n^{q+1}dx\nonumber\\
		&\leq B
		\end{align}
		for every $0<\gamma<1$.  Using \eqref{bdd2} in \eqref{energy} we get
		\begin{align}\label{3.21}
		\frac{1}{p}\left(\lambda_n\int_{\Omega}u_n^{1-\gamma} dx+\int_{\Omega}u_n^{q+1}dx\right)-\frac{\lambda_n}{1-\gamma}\int_{\Omega}u_n^{1-\gamma} dx-\frac{1}{q+1}\int_{\Omega}u_n^{q+1}dx\leq B.
		\end{align}
		From \eqref{3.21} we get 
		\begin{align}\label{3.22}
		\left(\frac{1}{p}-\frac{1}{q+1}\right)\int_{\Omega}u_n^{q+1}dx&\leq B+\lambda_n\left(\frac{1}{1-\gamma}-\frac{1}{p}\right)\int_{\Omega}u_n^{1-\gamma}dx.
		\end{align}
		Using \eqref{3.22} in \eqref{bdd2} we obtain
		\begin{align}\label{finalineq}
		\|u_n\|_{X_0}^{p-1+\gamma}&\leq A_1+\frac{A_2}{\|u_n\|_{X_0}^{1-\gamma}}.
		\end{align}
		From the inequality in \eqref{finalineq}, it is easy to see that $\underset{{n\in\mathbb{N}}}{\sup}\|u_n\|_{X_0}<\infty$. Thus by the reflexivity of $X_0$, we have a subsequence, which will still be denoted by $\{u_n\}$, such that $u_n\rightharpoonup u$ weakly in $X_0$, as $n\rightarrow\infty$. This establishes that $u$ is a weak solution corresponding to $\Lambda$. 
	\end{proof}
	\noindent We now prove a corollary based on the Theorem \ref{solnLambda}.
	\begin{corollary}
		Let $1<q\leq p_s^{*}-1$, $0<\gamma <1$ and $0<\lambda\leq\Lambda$. Then there exists a smallest solution in $X_0$ to the problem \eqref{main}.
	\end{corollary}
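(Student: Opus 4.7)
The plan is to construct the smallest solution as the limit of a monotone iterative scheme initiated at a canonical subsolution and bounded above by an arbitrary existing solution. Throughout, fix $\lambda\in(0,\Lambda]$ and let $v$ be any weak solution of \eqref{main}; the existence of such a $v$ is guaranteed by Lemma \ref{subsuperlemma} when $\lambda<\Lambda$ and by Theorem \ref{solnLambda} when $\lambda=\Lambda$. Let $\underline{u}=t\phi_1$ be the subsolution built in the proof of Lemma \ref{subsuperlemma}, where $t>0$ is chosen small enough that, simultaneously, $\underline{u}\leq v$ and $\underline{u}$ remains a subsolution of \eqref{main}. This is possible because $\phi_1\in L^\infty(\Omega)$ and $\phi_1\gtrsim\delta^s$ near $\partial\Omega$, while $v\gtrsim\delta^s$ by the estimate used in the proof of Lemma \ref{subsuperlemma}.

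Next, define inductively $u_0=\underline{u}$ and, for $n\geq 0$, let $u_{n+1}\in X_0$ be the unique weak solution of
\begin{equation*}
(-\Delta_p)^s u_{n+1}=\frac{\lambda}{u_n^\gamma}+u_n^q\ \text{in}\ \Omega,\qquad u_{n+1}=0\ \text{in}\ \mathbb{R}^N\setminus\Omega.
\end{equation*}
The right-hand side at each stage lies in a suitable dual space because $u_n\geq\underline{u}\gtrsim\delta^s$ makes $u_n^{-\gamma}$ locally bounded and of class $L^1(\Omega)$ with a weight controlled by $\delta^{-s\gamma}$, while $u_n\leq v$ keeps $u_n^q$ controlled in $L^{(p_s^*)'}$. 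Existence and uniqueness of $u_{n+1}$ follow from strict monotonicity of $(-\Delta_p)^s$ together with the direct method, exactly as in the solvability result quoted from \cite{canino2017nonlocal}.

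The key inductive step is to show, using the weak comparison principle (Lemma \ref{weak comparison}) and the monotonicity of the map $t\mapsto\lambda t^{-\gamma}+t^q$ applied between the trapped values, that
\begin{equation*}
\underline{u}=u_0\leq u_1\leq u_2\leq\cdots\leq u_n\leq u_{n+1}\leq v\quad\text{a.e.\ in}\ \Omega.
\end{equation*}
Indeed, since $(-\Delta_p)^s u_1\geq(-\Delta_p)^s\underline{u}$ weakly and $(-\Delta_p)^s u_1\leq (-\Delta_p)^s v$ weakly, Lemma \ref{weak comparison} gives $\underline{u}\leq u_1\leq v$; iterating the argument yields the full chain. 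Testing the equation for $u_{n+1}$ against $u_{n+1}$ itself and using the trap $\underline{u}\leq u_n\leq v$ produces a uniform $X_0$ bound, so up to a subsequence $u_n\rightharpoonup u^*$ in $X_0$ and $u_n\to u^*$ a.e. and in $L^r(\Omega)$ for $r\in[1,p_s^*)$, with $\underline{u}\leq u^*\leq v$.

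Passing to the limit in the weak formulation is the main technical obstacle and is handled as follows. For the nonsingular term, a.e.\ convergence plus the domination $0\leq u_n^q\leq v^q\in L^{(q+1)/q}(\Omega)$ allows an appeal to the dominated convergence theorem. For the singular term, the bound $u_n^{-\gamma}\leq\underline{u}^{-\gamma}\lesssim\delta^{-s\gamma}$ together with Hardy-type control in $X_0$ (which yields $\int_\Omega\delta^{-s\gamma}|\phi|<\infty$ for $\phi\in X_0$) again gives dominated convergence. For the nonlocal leading term, the $X_0$ bound, a.e.\ convergence, and the standard Brezis--Lieb / Mosco-type argument for monotone operators of fractional $p$-Laplace type let us deduce strong convergence $u_n\to u^*$ in $X_0$, so the limit equation holds and $u^*$ is a weak solution of \eqref{main}. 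Finally, since $v$ was an arbitrary solution and the construction yielded $u_n\leq v$ at every stage, we conclude $u^*\leq v$ for every solution $v$, which is precisely the minimality asserted.
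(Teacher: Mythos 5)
Your overall strategy (monotone iteration from the subsolution, bounded above by an arbitrary solution, pass to the limit, conclude minimality) is the same as the paper's, but your iterative scheme is fully explicit and this breaks the monotonicity step. You define $u_{n+1}$ by
\begin{equation*}
(-\Delta_p)^s u_{n+1}=\frac{\lambda}{u_n^\gamma}+u_n^q,
\end{equation*}
and then assert ``$(-\Delta_p)^s u_1\leq(-\Delta_p)^s v$ weakly'' and invoke ``monotonicity of the map $t\mapsto\lambda t^{-\gamma}+t^q$.'' That map is \emph{not} monotone on $(0,\infty)$: it is decreasing for $t<(\lambda\gamma/q)^{1/(q+\gamma)}$ and increasing beyond, and since every admissible solution vanishes on $\partial\Omega$ the trapped values necessarily enter the decreasing range near the boundary. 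Consequently, from $u_0\leq v$ you cannot deduce $\lambda u_0^{-\gamma}+u_0^q\leq\lambda v^{-\gamma}+v^q$ (indeed the singular term goes the wrong way), so the inequality $(-\Delta_p)^s u_1\leq(-\Delta_p)^s v$ is unjustified and the chain $u_n\leq v$ does not follow from your scheme. The same obstruction recurs at every inductive step $u_n\leq u_{n+1}$: $u_{n-1}\leq u_n$ gives $u_{n-1}^q\leq u_n^q$ but $u_{n-1}^{-\gamma}\geq u_n^{-\gamma}$.

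The paper avoids this by keeping the singular term on the \emph{implicit} side of the iteration:
\begin{equation*}
(-\Delta_p)^s v_n-\frac{\lambda}{v_n^\gamma}=v_{n-1}^q.
\end{equation*}
Here the operator $L u:=(-\Delta_p)^s u-\lambda u^{-\gamma}$ acting on the unknown $v_n$ is exactly the one for which Lemma \ref{weak comparison} is proved, and the only quantity passed from the previous iterate is $v_{n-1}^q$, which \emph{is} monotone increasing in $v_{n-1}$. Then $v_{n-1}\leq v_n$ gives $L v_n=v_{n-1}^q\leq v_n^q=L v_{n+1}$, hence $v_n\leq v_{n+1}$ by Lemma \ref{weak comparison}, and similarly $L v_{n+1}=v_n^q\leq \hat v^q=L\hat v$ gives $v_{n+1}\leq\hat v$ for any solution $\hat v$. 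To repair your argument you should switch to this semi-implicit scheme (the solvability of each iterate is the same application of \cite{canino2017nonlocal} you already cite); the rest of your proof — uniform $X_0$ bound from the trap, extraction of a weakly convergent subsequence, passage to the limit via dominated convergence and monotonicity, and minimality from $v_n\leq\hat v$ — then goes through essentially as you wrote it and as the paper does.
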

	\begin{proof}
		From Lemma \ref{subsuperlemma}, we guarantee the existence of a weak solution $u_\lambda$ to the problem \eqref{main} for $\lambda\in (0,\Lambda)$. We now define a sequence $\{v_n\}$ by the following iterative sequence of problems. Define $v_1=\underline{u}$, a subsolution of \eqref{main}. The remaining terms of the sequence can be defined by the following iterative scheme.
		\begin{align}
		(-\Delta_p)^s v_n-\frac{\lambda}{v_n^{\gamma}}&=v_{n-1}^{q}~\text{in}~\Omega,\nonumber\\
		v_n&>0~\text{in}~\Omega,\nonumber\\
		v_n&=0~\text{in}~\mathbb{R}^{N}\setminus\Omega,
		\end{align} 
		for each $n\in\mathbb{N}$. By the choice of $v_1$ we have $v_1\leq u$, where $u$ is a weak solution to \eqref{main}, whose existence is again attributed to the Lemma \ref{subsuperlemma}. By the weak comparison principle (refer Lemma \ref{weak comparison} in the Appendix), it is clear that $v_1\leq v_2\leq...\leq u$. Owing to the Theorem 6.4 in \cite{mukherjee2016fractional}, we have $u$ is in $L^{\infty}(\Omega)$, which further implies that the sequence $\{v_n\}$ is bounded in $X_0$. Thus we have a subsequence such that $v_n\rightharpoonup \hat{u}$. To conclude that $\hat{u}$ is the minimal solution, we let $\hat{v}$ to be a solution to \eqref{main}. We have $v_n\leq\hat{v}$ which on passing the limit $n\rightarrow\infty$ we get $\hat{u}\leq\hat{v}$.   
	\end{proof}
	\subsection{$C^1$ versus $W^{s,p}$ local minimizers of the energy} The following lemma is useful to prove the  multiplicity of solutions. More precisely,  
	we now turn our attention to the main theorem.
	\begin{theorem}\label{versus}
		With the growth conditions of f in tact.  Let $u_0\in C^1(\overline{\Omega})$satisfying 
		\begin{equation}\label{bord}
		u_0\geq \eta\mbox{d}(x,\partial\Omega)\mbox{ for some }\eta>0
		\end{equation}
		be a local minimizer of $I$ in $C^1(\overline{\Omega})$ topology; that is,
		\begin{eqnarray*}
			\exists\,\epsilon >0\,\mbox{such that }\,u\in C^1(\overline{\Omega})\;, 
			\|u-u_0\|_{C^1(\overline{\Omega})}<\epsilon\Rightarrow I(u_0)\leq I(u).
		\end{eqnarray*}
		Then, $u_0$ is a local minimum of $I$ in $W^{s,p}_0(\Omega)$ also.
	\end{theorem}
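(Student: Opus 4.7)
The plan is to argue by contradiction, following the Brezis--Nirenberg strategy adapted to the nonlocal singular setting. Suppose $u_0$ is not a local minimum of $I$ in $W^{s,p}_0(\Omega)$; then for every sufficiently small $r>0$,
\[
m(r) := \inf\bigl\{\, I(v) : v\in W^{s,p}_0(\Omega),\ \|v-u_0\|_{W^{s,p}_0}\leq r \,\bigr\} < I(u_0).
\]
The first step is to show $m(r)$ is attained. The closed ball $\overline{B_r}(u_0)$ is convex and hence weakly closed in the reflexive space $W^{s,p}_0(\Omega)$. The Gagliardo term of $I$ is convex and weakly lower semicontinuous; the power term $\int_\Omega(v^+)^{q+1}$ is weakly continuous for subcritical $q$ via the compact embedding of Lemma~\ref{embedding}, and is handled in the critical case by the Brezis--Lieb splitting already used in Lemma~\ref{lambda finite}; the singular term $\int_\Omega(v^+)^{1-\gamma}$ is continuous by dominated convergence along a weakly convergent subsequence (since $1-\gamma<1<p$). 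The direct method then produces a minimizer $v_r$.

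Next I would derive an Euler--Lagrange equation for $v_r$. Either $\|v_r-u_0\|<r$, in which case $v_r$ is an unconstrained critical point of $I$ and hence a weak solution of \eqref{main}, or $\|v_r-u_0\|=r$, in which case Lagrange's theorem yields a multiplier $\mu_r\geq 0$ so that, weakly on $\Omega$,
\[
(-\Delta_p)^s v_r \,+\, \mu_r\,\mathcal{A}(v_r-u_0) \;=\; \lambda(v_r^+)^{-\gamma} \,+\, (v_r^+)^{q},
\]
where $\mathcal{A}$ denotes the duality map associated with the Gagliardo seminorm. Since $v_r\rightharpoonup u_0$ in $W^{s,p}_0(\Omega)$ and $m(r)\uparrow I(u_0)$ as $r\downarrow 0$, standard arguments force $\mu_r\to 0$.

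The decisive step is upgrading $v_r\to u_0$ to convergence in $C^1(\overline{\Omega})$. Combining the hypothesis $u_0\geq \eta\,\mbox{d}(x,\partial\Omega)$ with the weak comparison principle (Lemma~\ref{weak comparison}) and the Hopf-type boundary estimate from Lemma~2.7 in \cite{iannizzotto2015hs} yields $v_r\geq \tfrac{\eta}{2}\,\mbox{d}(x,\partial\Omega)$ on $\Omega$ for all small $r$, so the singular source $\lambda(v_r)^{-\gamma}$ is uniformly controlled with only $\mbox{d}^{-\gamma}$ boundary blow-up. The uniform $L^\infty$ bound of \cite{mukherjee2016fractional} and the uniform $C^{1,\beta}(\overline{\Omega})$ regularity of \cite{iannizzotto2014global} then apply, and an Arzel\`a--Ascoli argument gives $v_r\to u_0$ in $C^1(\overline{\Omega})$. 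For $r$ small enough one has $\|v_r-u_0\|_{C^1(\overline{\Omega})}<\epsilon$, and the $C^1$-local minimality of $u_0$ forces $I(u_0)\leq I(v_r)=m(r)<I(u_0)$, the desired contradiction. The main obstacle is this regularity upgrade: the Lagrange perturbation $\mu_r\,\mathcal{A}(v_r-u_0)$ must be absorbed into a uniformly bounded right-hand side (using $\mu_r\to 0$) in order to apply the quoted regularity results, which are stated for equations of the form $(-\Delta_p)^s v = g$ with $g\in L^\infty$; in the critical case $q=p_s^*-1$ additional care is needed to prevent energy concentration, which is ruled out by choosing $r$ smaller than a Palais--Smale threshold controlled by the best Sobolev constant $S$ of \eqref{sobolev const}.
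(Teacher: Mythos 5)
Your high-level strategy is the right one (contradiction, constrained minimization near $u_0$, Lagrange multiplier, uniform estimates, and then $C^1$ convergence to contradict $C^1$-local minimality), but the crucial technical choice of the \emph{constraint set} is wrong, and this is not a detail — it is the whole point of the Brezis--Nirenberg trick. You minimize $I$ over the closed ball $\overline{B_r}(u_0)$ in $W^{s,p}_0(\Omega)$, so when the minimizer lands on the boundary $\|v_r-u_0\|=r$, the Lagrange multiplier term is $\mu_r\,\mathcal{A}(v_r-u_0)$, where $\mathcal{A}$ is the duality map of the Gagliardo seminorm, i.e.\ essentially $\mu_r(-\Delta_p)^s(v_r-u_0)$. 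This is a \emph{principal-order} term: the Euler--Lagrange equation for $v_r$ is no longer of the form $(-\Delta_p)^s v = g$ with a zero-order right-hand side. The regularity machinery you invoke — the $L^\infty$ estimate of \cite{mukherjee2016fractional} and the global $C^{1,\beta}$ theory of \cite{iannizzotto2014global} — applies to equations with a bounded or controlled zero-order source, not to equations whose leading operator is perturbed. Sending $\mu_r\to 0$ does not fix this, because you need estimates that are uniform in $r$ (equivalently in $\mu_r$) along the whole sequence, and a "small perturbation of the operator" argument is a different and much more delicate statement than what is cited. You flag this obstacle yourself ("the Lagrange perturbation $\mu_r\,\mathcal{A}(v_r-u_0)$ must be absorbed into a uniformly bounded right-hand side"), but with the $W^{s,p}_0$-ball as constraint it cannot be absorbed: it is not a zero-order term.

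The paper resolves exactly this difficulty by constraining in a \emph{lower-order} norm: it minimizes over $S_\epsilon=\{v: K(v)\le\epsilon\}$ where $K(w)=\tfrac{1}{r+1}\int|w-u_0|^{r+1}$ with $q<r<p_s^*-1$ (and $\chi(w)=\tfrac{1}{p_s^*}\int|w-u_0|^{p_s^*}$ in the critical case). Then $K'(v_\epsilon)=|v_\epsilon-u_0|^{r-1}(v_\epsilon-u_0)$ is a zero-order power nonlinearity of subcritical growth, so the constrained minimizer solves a problem of the form
\[
(-\Delta_p)^s v_\epsilon = v_\epsilon^{-\gamma}+f(x,v_\epsilon)+\mu_\epsilon|v_\epsilon-u_0|^{r-1}(v_\epsilon-u_0),
\]
which is exactly of the type for which Moser iteration gives a uniform $L^\infty$ bound and the Iannizzotto--Mosconi--Squassina theory gives uniform interior/boundary H\"older estimates; the Lagrange term is then controlled by the sign $\mu_\epsilon\le 0$ and, when $\mu_\epsilon\to-\infty$, by testing with $|v_\epsilon-u_0|^{\beta-1}(v_\epsilon-u_0)$ and letting $\beta\to\infty$ to get $-\mu_\epsilon\|v_\epsilon-u_0\|_\infty\le C$. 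Your proposal also omits the entirely separate lower-barrier step $v_\epsilon\ge\eta\phi_1$ (proved in the paper by a one-dimensional convexity/Taylor argument on $\xi(t)=I(v_\epsilon+tv_\eta)$), which is what tames the singular term $v_\epsilon^{-\gamma}$; invoking the comparison principle and the Hopf lemma directly, as you do, presupposes uniform boundary regularity of $v_\epsilon$ that has not yet been established at that stage of the argument. In short: replace the $W^{s,p}_0$-ball by the $L^{r+1}$ (resp.\ $L^{p_s^*}$) sublevel set of $K$ (resp.\ $\chi$), prove the lower barrier $v_\epsilon\ge\eta\phi_1$ first, and only then run the uniform $L^\infty$ and $C^\alpha$ estimates. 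Without that change of constraint the proof does not close.
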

	For proving Theorem \ref{versus}, we will need uniform $L^\infty$-estimates for a family of solutions to $(P_\epsilon)$  as below.
\begin{theorem}\label{linftyestimate}
Let $\{u_\epsilon\}_{\epsilon\in(0,1)}$ be a family of solutions to ${\rm (P_\epsilon)}$, where $u_0$ satisfies \eqref{bord} and solves $\rm{(P)}$; let $\displaystyle\sup_{\epsilon\in (0,1)}(\|u_\epsilon\|_{W^{s,p}_0(\Omega)})<\infty$. Then, there exists $C_1, C_2>0$ (independent of $\epsilon$) such that
\begin{eqnarray*}
\displaystyle\sup_{\epsilon\in(0,1)}\|u_\epsilon\|_{L^\infty(\Omega)}<\infty\;\mbox{and}\; C_1 d(x,\partial\Omega)\leq u_\epsilon\leq C_2 d(x,\partial\Omega).
\end{eqnarray*}
\end{theorem}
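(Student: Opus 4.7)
The plan is to establish the three conclusions sequentially: first a uniform $L^\infty$-bound on $\{u_\epsilon\}$, then a lower barrier of the form $u_\epsilon\geq C_1\,d(x,\partial\Omega)^s$, and finally the matching upper barrier (I work with $d^s$ throughout, the natural boundary rate for fractional problems, to be read in the convention of \eqref{bord}). Throughout the argument I exploit the hypothesised uniform bound of $\{u_\epsilon\}$ in $W^{s,p}_0(\Omega)$ together with comparison against explicit sub- and supersolutions whose boundary behaviour is already known.

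For the uniform $L^\infty$-bound I would test the weak formulation of $(P_\epsilon)$ against the truncation $(u_\epsilon-k)_+\in X_0$ for $k\geq 1$. On the level set $\{u_\epsilon>k\}$ the regularised singular term $\lambda(u_\epsilon+\epsilon)^{-\gamma}$ is dominated by $\lambda$, while the superlinear term $u_\epsilon^q$ is controlled via the embedding $X_0\hookrightarrow L^{p_s^*}(\Omega)$ together with the hypothesised $W^{s,p}_0$-bound. A De Giorgi--Moser iteration (in the spirit of Di Castro--Kuusi--Palatucci, or Theorem~3.2 of \cite{iannizzotto2014global}) then produces an $\epsilon$-independent constant $M$ with $\sup_{\epsilon\in(0,1)}\|u_\epsilon\|_{L^\infty(\Omega)}\leq M$.

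For the lower barrier I would take $v_t:=t\phi_1$, where $\phi_1>0$ is the first eigenfunction of $(-\Delta_p)^s$ on $\Omega$ normalised so that $\phi_1\in L^\infty(\Omega)$. Choosing $t>0$ small enough one has
\begin{equation*}
(-\Delta_p)^s v_t \;=\; \lambda_1 t^{p-1}\phi_1^{p-1}\;\leq\;\frac{\lambda}{(v_t+\epsilon)^\gamma}+v_t^q\qquad\text{for every }\epsilon\in(0,1),
\end{equation*}
since the right-hand side is monotone decreasing in $\epsilon$ and it is enough to verify the inequality at $\epsilon=1$. The weak comparison principle (Lemma \ref{weak comparison}) then gives $u_\epsilon\geq v_t$ in $\Omega$, and combining with the Hopf-type estimate $\phi_1\geq c\,d(x,\partial\Omega)^s$ of Iannizzotto--Mosconi--Squassina yields the lower barrier with $C_1$ independent of $\epsilon$.

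The main obstacle is the uniform upper barrier, since one needs boundary regularity that does not degenerate as $\epsilon\to 0^+$. My strategy is to rewrite $(P_\epsilon)$ as $(-\Delta_p)^s u_\epsilon=f_\epsilon$ and use the lower barrier from the previous step to control the forcing: we have
\begin{equation*}
f_\epsilon(x)\;=\;\frac{\lambda}{(u_\epsilon+\epsilon)^\gamma}+u_\epsilon^q\;\leq\;\lambda\bigl(c\,d(x,\partial\Omega)^s\bigr)^{-\gamma}+M^q,
\end{equation*}
and because $\gamma<1$ the right-hand side lies in $L^r(\Omega)$ uniformly in $\epsilon$ for any $r<N/(\gamma s)$. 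This is sufficient to invoke the global fine boundary regularity of Iannizzotto--Mosconi--Squassina (Theorem~4.4 of \cite{iannizzotto2014global}), which delivers $\|u_\epsilon/d^s\|_{L^\infty(\Omega)}\leq C_2$ with $C_2$ independent of $\epsilon$, completing the proof of the upper barrier.
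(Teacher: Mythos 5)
The three-part skeleton you set up -- uniform $L^\infty$ bound by Moser iteration, lower barrier by comparison with a small multiple of $\phi_1$, upper barrier through boundary regularity of the quotient $u/\delta^s$ -- is exactly the cluster of tools the paper assembles in its regularity section, so the high-level strategy is the right one. But there are two substantive problems with the way you execute it.

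First, you have misidentified what $(P_\epsilon)$ is. You treat it as the $\epsilon$-regularised singular problem $(-\Delta_p)^s u = \lambda(u+\epsilon)^{-\gamma}+u^q$, but in this paper $(P_\epsilon)$ is the Lagrange-multiplier perturbation that arises in the proof of the $C^1$-versus-$W^{s,p}_0$ theorem, namely
$(-\Delta_p)^s u_\epsilon = u_\epsilon^{-\gamma}+f(x,u_\epsilon)+\mu_\epsilon|u_\epsilon-u_0|^{r-1}(u_\epsilon-u_0)$
with $\mu_\epsilon\le 0$, and in the critical case with $f$ replaced by the truncation $f_{j_\epsilon}$ and exponent $p_s^*-2$. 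This changes your argument in two places. Your lower-barrier step hinges on the observation that ``the right-hand side is monotone decreasing in $\epsilon$, so it suffices to verify the subsolution inequality at $\epsilon=1$'': that is correct for $(u+\epsilon)^{-\gamma}$ but has no analogue for the $\mu_\epsilon$-term, whose dependence on $\epsilon$ is neither explicit nor monotone (in one sub-case $\inf_\epsilon\mu_\epsilon=-\infty$). You would instead have to argue, as the paper does, that for $\eta$ small and independent of $\epsilon$ the extra term is either sign-favourable on $\{u_\epsilon<\eta\phi_1\}$ or absorbed, which is not automatic and needs to be written out. Similarly your De Giorgi--Moser step must control the additional nonlinearity on the superlevel sets $\{u_\epsilon>k\}$; when $\mu_\epsilon\le 0$ and $k>\|u_0\|_\infty$ that term is nonpositive and actually helps, but this is a point you must make, and in the unbounded-$\mu_\epsilon$ regime one first truncates at a fixed $M$ as the paper does.

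Second, the upper barrier as you phrase it does not quite close. You reduce to a forcing bounded by $\lambda(c\,d^s)^{-\gamma}+M^q$, note it lies in $L^r(\Omega)$ for $r<N/(\gamma s)$, and then invoke Theorem~4.4 of \cite{iannizzotto2014global} to get $\|u_\epsilon/d^s\|_{L^\infty}\le C_2$. But that theorem is stated for $f\in L^\infty(\Omega)$, and your forcing blows up like $d^{-\gamma s}$ at $\partial\Omega$, so it cannot be applied directly. To make this step rigorous you need a version of the weighted boundary estimate adapted to a forcing with singular boundary weight -- either a barrier construction along the lines of Giacomoni--Schindler--Tak\'a\v{c} \cite{giacomoni2007sobolev} transported to the nonlocal setting, or an $L^r$-data version of the Iannizzotto--Mosconi--Squassina boundary estimate with an explicit threshold on $r$ in terms of $N,s,p$; merely having $f_\epsilon$ in some $L^r$ is not what Theorem~4.4 gives you. (Your observation that one should read the conclusion with $d^s$ rather than $d$, in line with the natural boundary rate for fractional problems and with the first statement of \eqref{bord}, is correct; the $d$ without the exponent in the theorem as printed is a slip.)
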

The proof of the above theorem is a consequence of the results proved in section \label{regularity}.
	\begin{proof}
		Let $K\Subset\Omega$. We divide the proof into two cases.\\
		{\bf Case 1}:~{\it subcritical case} when $q<p_s^*-1$.\\
		We prove by contradiction, i.e. suppose $u_0$ is not a local minimizer. Let $r\in (q,p_s^*-1)$ and define
		\begin{align}\label{aux1}
		\begin{split}
		K(w)&=\frac{1}{r+1}\int_{K}|w-u_0|^{r+1}dx, (w\in W^{s,p}(K)).
		\end{split}
		\end{align}
		{\it Case i}:~{$K(v_{\epsilon})<\epsilon$}.\\
		Define $S_{\epsilon}=\{v\in W_0^{s,p}(\Omega):0\leq K(v)\leq \epsilon\}$. Consider the problem $I_{\epsilon}=\inf_{v\in S_{\epsilon}}\{I(v)\}$. The infimum exists since the set $S_{\epsilon}$ is bounded and the functional $I$ is $C^1$. In addition, $I$ is also w.l.s.c. and $S_{\epsilon}$ is closed, convex. Thus $I_{\epsilon}$ is actually attained, at say $v_{\epsilon}\in S_{\epsilon}$, and $I_{\epsilon}=I(v_{\epsilon})<I(u_0)$. \\
		{\it Claim}:~We will now show that  $\exists \eta>0$ such that $v_{\epsilon}\geq \eta \phi_1$.\\
		{\it Proof}:~ The proof is again by contradiction., i.e. $\forall\eta>0$ let $|\Omega_{\eta}|=|supp\{(\eta\phi_1-v_{\epsilon})^{+}\}|>0$. Define $v_{\eta}=(\eta\phi_1-v_{\epsilon})^{+}$. For $0<t<1$ define $\xi(t)=I(v_{\epsilon}+v_{\eta})$. Thus 
		\begin{align}
		\begin{split}
		\xi'(t)&=\langle I'(v_{\epsilon}+tv_{\eta}),v_{\eta}\rangle\\
		&=\langle(-\Delta_p)^s(v_{\epsilon}+tv_{\eta})-(v_{\epsilon}+tv_{\eta})^{-\alpha}-f(x,v_{\epsilon}+tv_{\eta}),v_{\eta}\rangle.
		\end{split}
		\end{align}
		Similarly,
		\begin{align}
		\begin{split}
		\xi'(1)&=\langle I'(v_{\epsilon}+v_{\eta}),v_{\eta}\rangle\\
		&=\langle I'(\eta\phi_1),v_{\eta}\rangle\\
		&=\langle(-\Delta_p)^s(\eta\phi_1)-(\eta\phi_1)^{-\alpha}-f(x,\eta\phi_1),v_{\eta}\rangle<0
		\end{split}
		\end{align}
		for sufficiently small $\eta>0$. Moreover,
		\begin{align}
		\begin{split}
		-\xi'(1)+\xi'(t)&=\langle(-\Delta_p)^s(v_{\epsilon}+tv_{\eta})-(-\Delta_p)^s(v_{\epsilon}+v_{\eta})\\
		&+((v_{\epsilon}+v_{\eta})^{-\alpha}-(v_{\epsilon}+tv_{\eta})^{-\alpha})+(f(x,v_{\epsilon}+v_{\eta})-f(x,v_{\epsilon}+tv_{\eta})),v_{\eta}\rangle.
		\end{split}
		\end{align}
		Since $s^{-\alpha}+f(x,s)$ is a uniformly nonincreasing function with respect to $x\in\Omega$ for sufficiently small $s>0$. Also from the monotonicity of $(-\Delta_p)^s$ we have, for sufficiently small $\eta>0$, $0\leq \xi'(1)-\xi'(t)$. From the Taylor series expansion and the fact that $K(v_{\epsilon})<\epsilon$ we have $\exists 0<\theta<1$ such that 
		\begin{align}
		\begin{split}
		0&\leq I(v_{\epsilon}+v_{\eta})-I(v_{\epsilon})\\
		&=\langle I'(v_{\epsilon}+\theta v_{\eta}),v_{\eta}\rangle\\
		&=\xi'(\theta).
		\end{split}
		\end{align}  
		Thus for $t=\theta$ we have $\xi'(\theta)\geq 0$ which is a contradicton to $\xi'(\theta)\leq\xi'(1)<0$ as obtained above. Thus $v_{\epsilon}\geq \eta\phi_1$ for some $\eta>0$. In fact from the Lemma \ref{calpha1}, \ref{calpha2} we have $ \sup_{\epsilon\in(0,1]}\{\|u_{\epsilon}\|_{C^{1,\alpha}(\overline{\Omega})}\}\leq C$. By the compact embedding $C^{1,\alpha}(\overline{\Omega})\hookrightarrow C^{1,\kappa}\overline{\Omega})$, for any $\kappa<\alpha$,  we have $u_{\epsilon}\rightarrow u_0$ which contradicts the assumption made.\\
		{\it Case ii}:~{$K(v_{\epsilon})=\epsilon$}\\
		In this case, from the Lagrange multiplier rule we have $I'(v_{\epsilon})=\mu_{\epsilon}K(v_{\epsilon})$. We will first show that $\mu_{\epsilon}\leq 0$. Suppose $\mu_{\epsilon}>0$, then $\exists \phi\in X$ such that 
		\begin{align*}
		\begin{split}
		\langle I'(v_{\epsilon}),\phi \rangle<0~ \text{and} ~\langle K'(v_{\epsilon}),\phi \rangle<0.
		\end{split}
		\end{align*}
		Then for small $t>0$ we have 
		\begin{align*}
		\begin{split}
		I(v_{\epsilon}+t \phi)&<I(v_{\epsilon})\\
		K(v_{\epsilon}+t \phi)&<K(v_{\epsilon})=\epsilon
		\end{split}
		\end{align*}
		which is a contradiction to $v_{\epsilon}$ being a minimizer of $I$ in $S_{\epsilon}$.\\
		{\it Case i}:~($\mu_{\epsilon}\in(-l,0)$ where $l>-\infty$).\\
		Consider the sequence of problems
		\begin{align}
		\begin{split}
		(P_{\epsilon}):~(-\Delta_p)^su_{\epsilon}&=\gamma(\mu_{\epsilon},x,t)
		\end{split}
		\end{align}
		where $ \gamma(\mu_{\epsilon},x,t)=t^{-\alpha}+f(x,t)+\mu_{\epsilon}|t-u_0|^{r-1}(t-u_0)$. From the weak comparison principle we have $v_{\epsilon}\leq \eta\phi_1$ for some $\eta>0$ small enough, independent of $\epsilon$.  This is beacuse $\eta\phi_1$ is a strict subsolution to $P_{\epsilon}$. Further, since $-l\leq \mu_{\epsilon}\leq 0$, there exists $M$, $c$ such that 
		\begin{align}
		\begin{split}
		(-\Delta_p)^s(v_{\epsilon}-1)^{+}&\leq M+c((v_{\epsilon}-1)^{+})^r.
		\end{split}
		\end{align}
		Using the Moser iteration technique as in Lemma \ref{bounded} we obtain $\|v_{\epsilon}\|_{\infty}\leq C'$. Therefore $\exists L>0$ such that $\eta\phi_1\leq v_{\epsilon}\leq L\phi_1$. By using the arguments previously used, we end up getting $|v_{\epsilon}|_{C^{\alpha}(\overline{\Omega})}\leq C'$. The conclusion follows as in the previous case of $K(v)<\epsilon$.\\
		{\it Case ii}:~$\inf_{\mu_{\epsilon}}=-\infty$\\
		Let us assume $\mu_{\epsilon}\leq -1$. As above, we can similarly obtain $v_{\epsilon\phi_1}$ for $\eta>0$ small enough and independent of $\epsilon$. Further, there exists a constant $M>0$ such that $\gamma(s,x,t)<0$ $\forall (s,x,t)\in(-\infty,-1]\times\Omega\times(M,\infty)$.\\
		Then from the weak comparison principle on $(-\Delta_p)^s$, we gate $v_{\epsilon}\leq M$ for $\epsilon>0$ sufficiently small. Since $u_0$ is a local $C^1$ -  minimizer, $u_0$ is a weak solution to (P) and hence
		\begin{align}
		\begin{split}
		\langle (-\Delta_p)^su_0,\phi \rangle&=\int_{\Omega}u_0^{-\alpha}\phi dx+\int_{\Omega}f(x,u_0)\phi dx
		\end{split}
		\end{align}
		$\forall\phi\in C_c^{\infty}(\Omega)$. In fact from lemma (to be written next) we have for every function $w\in W_0^{s,p}(\Omega)$, $u_0$ satisfies 
		\begin{align}\label{s1}
		\begin{split}
		\langle (-\Delta_p)^su_0,w \rangle&=\int_{\Omega}u_0^{-\alpha}w dx+\int_{\Omega}f(x,u_0)w dx.
		\end{split}
		\end{align}
		Similarly,
		\begin{align}\label{s2}
		\begin{split}
		\langle (-\Delta_p)^sv_{\epsilon},wi \rangle&=\int_{\Omega}v_{\epsilon}^{-\alpha}w dx+\int_{\Omega}f(x,v_{\epsilon})w dx.
		\end{split}
		\end{align}
		On subtracting the relations, i.e. (\ref{s2})-(\ref{s1}) and testing with $w=|v_{\epsilon}-u_0|^{\beta}-1(v_{\epsilon}-u_0)$, where $\beta\geq 1$, we obtain
		\begin{align}
		\begin{split}
		0&\leq \beta\langle (-\Delta_p)^sv_{\epsilon}-(-\Delta_p)^su_0,|v_{\epsilon}-u_0|^{\beta}-1(v_{\epsilon}-u_0) \rangle\\
		&-\int_{\Omega}(g(v_{\epsilon}-g(u_0)))|v_{\epsilon}-u_0|^{\beta}-1(v_{\epsilon}-u_0)dx\\
		&=\int_{\Omega}(f(x,v_{\epsilon})-f(x,u_0))|v_{\epsilon}-u_0|^{\beta}-1(v_{\epsilon}-u_0)dx\\
		&+\mu_{\epsilon}\int_{\Omega}|v_{\epsilon}-u_0|^{\beta+r}dx.
		\end{split}
		\end{align}
		By H\"{o}lder's inequality and the bounds of $v_{\epsilon}$, $u_0$ we get
		\begin{align}
		\begin{split}
		-\mu_{\epsilon}\|v_{\epsilon}-u_0\|_{\beta+r}^{r}&\leq C|\Omega|^{\frac{r}{\beta+r}}.
		\end{split}
		\end{align}
		Here $C$ is independent of $\epsilon$ and $\beta$. On passing the limit $\beta\rightarrow\infty$ we get $-\mu_{\epsilon}\|v_{\epsilon}-u_0\|_{\infty}\leq C$. Working on similar lines we end up getting $v_{\epsilon}$ is bounded in $C^{\alpha}(\overline{\Omega})$ independent of $\epsilon$ and the conclusion follows. This marks an end to the subcritical case.\\\\
		{\bf Case 2}:~{\it $q=p_s^{*}-1$}\\
		The proof again follows by contradiction, i.e. we assume that the conclusion of the Theorem is untrue. Let 
		\begin{align}
		\begin{split}
		\chi(w)&=\frac{1}{p_s^*}\int_{\Omega}|w-u_0|^{p_s^*}dx, w\in W_0^{s,p}(\Omega).
		\end{split}
		\end{align}
		Further define 
		\begin{align*}
		\begin{split}
		C_{\epsilon}&=\{v\in W_0^{s,p}(\Omega):\chi(v)\leq\epsilon\}.
		\end{split}
		\end{align*}
		Further, consider the truncated functional 
		\begin{align*}
		\begin{split}
		I_j(v)&=\frac{1}{p}\|v\|_{X}^p-\int_{\Omega}G(v)dx-\int_{\Omega}F_j(x,v)dx, \forall v\in W_0^{s,p}(\Omega)
		\end{split}
		\end{align*}
		where $f_j(x,u)=f(x,T_j(s))$ and 
		\[T_j(s)=\begin{cases}
		-j & s\leq -j \\
		s & -j \leq s \leq j   \\
		j  & s\geq j.          
		\end{cases}\] 
		By the `{\it Lebesgue theorem}' we have, for any $v\in W_0^{s,p}(\Omega)$, $I_j(v)\rightarrow I(v)$ as $j\rightarrow\infty$. It follows from the truncation and this convergence that for each $\epsilon>0$, there is some $j_{\epsilon}$ (with $j_{\epsilon}\rightarrow\infty$ as $\epsilon\rightarrow 0^+$) such that $I_{j_{\epsilon}}(v_{\epsilon})\leq I(v_{\epsilon})\leq I(u_0)$.\\
		On the other hand, since $C_{\epsilon}$ is closed, convex and since $I_{j_{\epsilon}}$ is weakly lower semicontinuous we deduce that $I_{j_{\epsilon}}$ achieves its infimum at some $u_{\epsilon}\in C_{\epsilon}$. Therefore, for $\epsilon>0$ small enough, we have 
		\begin{align*}
		\begin{split}
		I_{j_{\epsilon}}(u_{\epsilon})&\leq I_{\epsilon}(v_{\epsilon})<I(u_0).
		\end{split}
		\end{align*}
		Again, we have $u_{\epsilon}\geq \eta\phi_1$ for $\eta>0$ sufficiently small, independent of $\epsilon$.
		By Lagrange multiplier, there exists $\mu_{\epsilon}\leq 0$ such that $I'(u_{\epsilon})=\mu_{\epsilon}\chi'(u_{\epsilon})$. By the construction we have $u_{\epsilon}\rightarrow u_0$ as $\epsilon\rightarrow 0$ in $L^{p_s^*}(\Omega)$.\\
		{\it Claim}:~$(u_{\epsilon})$ is bounded in $L^{\infty}(\Omega)$ as $\epsilon\rightarrow 0$.\\
		{\it Case i}:~$\inf_{0<\epsilon<1}\{\mu_{\epsilon}\}>-\infty$.\\
		Look at 
		\begin{align*}
		\begin{split}
		(P_{\epsilon}):~(-\Delta_p)^su&=u^{-\alpha}+f_{j_{\epsilon}}(x,u)+\mu_{\epsilon}|u-u_0|^{p_s^*-2}(u-u_0).
		\end{split}
		\end{align*}
		which is satisfied weakly by $u_{\epsilon}$. Similar to the argument in the subcritical case we find $M>0$ independent of $\epsilon$, such that, 
		\begin{align*}
		\begin{split}
		(-\Delta_p)^s(u_{\epsilon}-1)^{+}&\leq M+c|(u_{\epsilon}-1)^{+}|^{p_s^*-1}.
		\end{split}
		\end{align*}
		By the Moser iteration method we get $(u_{\epsilon})$ is bounded in $L^{\infty}(\Omega)$. Going by the steps in the subcritical case we conclude Case i.\\
		{\it case ii}:~$\inf_{0<\epsilon<1}\{\mu_{\epsilon}\}=-\infty$\\
		By similar argument as used earlier we get $u_{\epsilon\geq \eta\phi_1}$ for some $\eta>0$ independent of $\epsilon$. Moreover, there exists $M>0$, independent of $\epsilon$, such that 
		\begin{align*}
		\begin{split}
		s^{-\alpha}+f_{j_{\epsilon}}(x,s)+\mu_{\epsilon}|s-u_0(x)|^{p_s^*-2}(s-u_0(x))&<0,~\text{if}~s>M.
		\end{split}
		\end{align*}
		Taking $(u_{\epsilon}-M)^{+}$ as a test function, we conclude that $u_{\epsilon}\leq M$ in $\Omega$. Continuing the proof as in the subcritical case we prove the claim of Case ii.
	\end{proof}
	\noindent{\bf Remark}:~Note that in all the above cases, since we already have $u_{\epsilon}\geq\eta\phi_1$ for $\eta>0$ sufficiently large and $u_{\epsilon}$ is an $L^{\infty}$-function, then there exists $L$ sufficiently large such that $\eta\phi_1\leq u_{\epsilon} \leq L\phi_1$. Thus $\eta\leq\frac{u_{\epsilon}}{\phi_1}\leq L$.
	\section{Multiplicity of weak solutions}
	This section is devoted to show the existence of a critical point $v_\lambda$  of the functional $\bar{I}_\lambda$ since, the functional $I_\lambda$ fails to be $C^1$. The critical point $v_\lambda$ of $\bar{I}_\lambda$ is also a point where the G\^{a}teaux derivative of the functional $I_\lambda$ vanishes. Therefore, $v_\lambda$ will solve the problem (\ref{main}). We will prove $v_\lambda\neq u_\lambda$, where, $u_\lambda$ is the solution to the problem \eqref{main} as proved in the Lemma \ref{subsuperlemma}. We have the following theorem proved in Ghoussoub-Preiss \cite{ghoussoub1989general}.
	\begin{theorem}[Ghoussoub-Preiss]\label{ghosub}
		Let $\varphi\colon X\rightarrow\mathbb{R}$ be a continuous and G\^{a}teaux differentiable function on a Banach space $X$ such that $\varphi\colon X\rightarrow X^*$ is continuous from the norm topology on $X$ to the weak$^*$ topology of $X^*$. Take two point $u_\lambda$ and $v_\lambda$ in $X$ and consider the number $$c={\underset{g\in\Gamma,}{\inf}}{\underset{0\leq t\leq1}{\max}} \varphi(g(t))$$
		where $\Gamma=\{g\in C([0,1], X)\colon g(0)=u_\lambda ~\&~ g(1)=v_\lambda\}$. Suppose $F$ is a closed subset of $X$ such that $F\cup\{x\in X: \varphi(x)\geq c\}$ seperates $u_\lambda$ and $v_\lambda$, then, there exists a sequence $\{x_n\}$ in $X$ verifying the following:
		\begin{itemize}
			\item[(i)] ${\underset{n \rightarrow\infty}{lim}}dist~(x_n,F)=0$
			\item[(ii)] $\underset{n \rightarrow\infty}{lim} \varphi (x_n)=c$
			\item[(iii)] $\underset{n \rightarrow\infty}{lim} \|\varphi^{'}(x_n)=0\|$
		\end{itemize}
	\end{theorem}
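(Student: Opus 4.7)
The plan is to prove this by contradiction using a quantitative deformation argument built on a pseudo-gradient vector field. Suppose the conclusion fails. Then there exist $\varepsilon_0, \delta > 0$ such that on the ``bad'' tube
\[
N = \{x \in X : \mathrm{dist}(x,F) \le 2\varepsilon_0,\ |\varphi(x) - c| \le 2\varepsilon_0\}
\]
we have $\|\varphi'(x)\|_{X^*} \ge \delta$ for every $x \in N$. This is the point where the hypothesis that $\varphi' : X \to X^*$ is continuous from the norm to the weak${}^*$ topology is used: this regularity is enough (via a standard partition-of-unity construction in a separable setting, together with weak${}^*$-density of norming functionals) to produce a locally Lipschitz pseudo-gradient field $V : X\setminus K \to X$ (where $K = \{\varphi' = 0\}$) with $\|V(x)\| \le 1$ and $\langle \varphi'(x), V(x)\rangle \ge \tfrac{1}{2}\|\varphi'(x)\|$.

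Next, I would cut off $V$ with a Lipschitz function $\chi$ that equals $1$ on $N$ and is supported in a slightly larger tube $N'$, and solve the ODE $\dot\eta(t,x) = -\chi(\eta)V(\eta)$, $\eta(0,x) = x$, producing a continuous semi-flow. Two properties follow from the construction: $t \mapsto \varphi(\eta(t,x))$ is nonincreasing, and along trajectories that spend time in $N$ the energy decreases at a uniform rate of at least $\delta/2$. Hence there exists $T > 0$ (depending only on $\varepsilon_0, \delta$) such that any trajectory that enters the inner tube $\{\mathrm{dist}(\cdot,F)\le\varepsilon_0,\ |\varphi - c|\le\varepsilon_0\}$ satisfies $\varphi(\eta(T,x)) \le c - \varepsilon_0$.

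Now choose, by definition of $c$, a path $g \in \Gamma$ with $\max_{t\in[0,1]} \varphi(g(t)) < c + \eta$ for $\eta \ll \varepsilon_0$, and form $\tilde g(t) := \eta(T, g(t))$. Since $\eta(T,\cdot)$ is the identity outside $N'$ and the endpoints $u_\lambda$, $v_\lambda$ lie outside $N'$ (one may assume $\|\varphi'\| = 0$ at them or shrink the tube), $\tilde g \in \Gamma$. The heart of the argument, and the main obstacle, is the topological step: I must show $\max_t \varphi(\tilde g(t)) < c$. The separation hypothesis that $F \cup \{\varphi \ge c\}$ separates $u_\lambda$ from $v_\lambda$ is used precisely here. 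Because $g$ connects the two components, its image must meet $F \cup \{\varphi \ge c\}$; combined with the upper bound on $\max \varphi\circ g$, this forces $g$ to actually enter the inner tube around $F$ at some parameter $t_*$. At that point the flow has pushed $\varphi$ below $c - \varepsilon_0$, while elsewhere the flow only decreases $\varphi$. Hence $\max \varphi \circ \tilde g < c$, contradicting the minimax definition of $c$.

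Finally, the negation I began with should be weakened to the standard diagonal form: for each $n$, no point satisfies $\mathrm{dist}(x,F) < 1/n$, $|\varphi(x)-c|<1/n$, $\|\varphi'(x)\| < 1/n$ simultaneously. Running the above contradiction for each $n$ produces, by the contrapositive, a sequence $\{x_n\}$ verifying (i)--(iii). The two subtle technical points I expect to spend the most care on are (a) the construction of a pseudo-gradient under only weak${}^*$-continuity of $\varphi'$, and (b) the quantitative separation step establishing that the minimax path must cross the tube around $F$ rather than going ``around'' it through $\{\varphi \ge c\}$.
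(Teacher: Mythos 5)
The paper does not prove this theorem; it cites Ghoussoub--Preiss \cite{ghoussoub1989general} and uses the result as a black box, so there is no ``paper's own proof'' to compare against. Your proposal is a reconstruction by a pseudo-gradient/deformation argument, which is genuinely different from the original route: Ghoussoub and Preiss apply Ekeland's variational principle to the functional $g\mapsto\max_{t\in[0,1]}\varphi(g(t))$ on the complete metric space $(\Gamma,\,d_\infty)$ and extract near-critical points by perturbing a near-optimal path in finitely many directions at the parameters where the maximum is nearly attained; no global deformation flow on $X$ is constructed. That design choice matters precisely because of the weak hypotheses on $\varphi'$, which brings us to the two points you flag but do not resolve.

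\textbf{(a)} Under only norm-to-weak$^*$ continuity of $\varphi'$, the norm $\|\varphi'(\cdot)\|_{X^*}$ is merely lower semicontinuous, so the set where $\langle\varphi'(v),w_u\rangle>\tfrac12\|\varphi'(v)\|$ holds need not be open and the classical partition-of-unity construction of a pseudo-gradient fails as stated. On the bad tube $N$ where your negation gives $\|\varphi'\|\ge\delta$, one can instead build a locally Lipschitz field $V$ with $\|V\|\le1$ and the weaker, constant lower bound $\langle\varphi'(x),V(x)\rangle\ge\delta/2$, using that $v\mapsto\langle\varphi'(v),w\rangle$ is continuous for each fixed $w$; this suffices for your rate estimate, but it is not the inequality you wrote. \textbf{(b)} More seriously, the topological step does not close. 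A near-optimal path $g$ must meet $F\cup\{\varphi\ge c\}$, but it may do so at a point of $\{\varphi\ge c\}$ lying far from $F$. At such a point the cutoff $\chi$ vanishes, the flow is the identity there, and $\varphi(\tilde g(t_*))=\varphi(g(t_*))\ge c$ survives the deformation, so you do not obtain $\max_t\varphi(\tilde g(t))<c$. The assertion that ``the path must actually enter the inner tube around $F$'' does not follow from the separation hypothesis together with the bound $\max\varphi\circ g<c+\eta$. This is exactly the obstruction that the Ekeland argument avoids: the variational inequality produced by Ekeland's principle, combined with the separation hypothesis, is what pins down parameters $t$ at which the path is simultaneously close to $F$, close to level $c$, and nearly critical. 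As written, your sketch has a genuine gap at this step.
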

	\begin{definition}\label{ps defn}
		Let $F\subset\Omega$, be closed and $c\in\mathbb{R}$. Then a sequence $\{v_n\}\subset X_0$ is said be a Palais Smale sequence [in short $(PS)_{F,c}$] for the functional $\bar{I}_\lambda$ around $F$ at the level $c$, if
		$${\underset{n \rightarrow\infty}{lim}}dist~(x_n,F)=0,~~~ \underset{n \rightarrow\infty}{lim} \bar{I}_\lambda (x_n)=c~~~\&~~ \underset{n \rightarrow\infty}{lim} \|\bar{I}_\lambda{'}(x_n)\|=0$$
	\end{definition}
	\noindent Every $(PS)_{F,c}$ sequence for $\bar{I}_\lambda$ have the following compactness property.
	\begin{lemma}\label{4l3}
		Let $F\subset\Omega$ be closed and $c\in\mathbb{R}$. Let $\{v_n\}\subset X_0$ be a $(PS)_{F, c}$ sequence for the functional $\bar{I}_\lambda$, then $\{v_n\}$ is bounded in $X_0$ and there exists a subsequence $\{v_n\}$ such that $v_n\rightharpoonup v_\lambda$ in $X_0$, where $v_\lambda$ is a weak solution of the problem $(\ref{main}).$ 
	\end{lemma}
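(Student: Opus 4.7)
The plan is to first establish boundedness of $\{v_n\}$ in $X_0$, then extract a weakly convergent subsequence by reflexivity, and finally identify the weak limit as a weak solution of \eqref{main}. For boundedness, I would use the standard combination $(q+1)\bar{I}_\lambda(v_n)-\langle \bar{I}'_\lambda(v_n),v_n\rangle$. Since the truncated nonlinearity $\bar{f}$ coincides with $f(t)=\lambda t^{-\gamma}+t^q$ only for $t>\overline{u}_\lambda$ and is bounded by the $L^\infty$-data $f(\underline{u}_\lambda)$ on the complementary region, this combination produces an inequality of the shape
\[
\Bigl(\tfrac{q+1}{p}-1\Bigr)\|v_n\|^p\leq C\Bigl(1+\int_\Omega |v_n^+|^{1-\gamma}\,dx\Bigr)+\epsilon_n\|v_n\|,
\]
where $\epsilon_n\to 0$ from the $(PS)_{F,c}$ condition. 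Because $1-\gamma<1<p$, Hölder's inequality together with the Sobolev embedding from Lemma \ref{embedding} yields $\|v_n\|^p\leq C(1+\|v_n\|^{1-\gamma}+\|v_n\|)$, and boundedness follows.

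By reflexivity of $X_0$, after passing to a subsequence I obtain $v_n\rightharpoonup v_\lambda$ weakly in $X_0$. The compact embedding $X_0\hookrightarrow L^r(\Omega)$ for $r\in[1,p_s^*)$ then yields strong convergence in every such $L^r$ together with a.e. pointwise convergence. To pass to the limit in $\langle \bar{I}'_\lambda(v_n),\phi\rangle\to 0$ for arbitrary $\phi\in X_0$, I would handle the principal fractional $p$-Laplacian term by a Minty-type monotonicity trick (the $(S_+)$ property of $(-\Delta_p)^s$): testing with $v_n-v_\lambda$ and invoking the pointwise monotonicity identity \eqref{identity} already exploited in Lemma \ref{weak comparison} forces $\limsup\|v_n\|^p\leq \|v_\lambda\|^p$, hence strong convergence in $X_0$. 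For the truncated nonlinearity, a.e. convergence of $\bar{f}(x,v_n)$ and its bound by $C(1+|v_n|^q)$ permit a dominated-convergence argument in the subcritical regime.

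The singular component is already smoothed out by the truncation in $\bar{f}$, so the above identifies $v_\lambda$ as a critical point of $\bar{I}_\lambda$. To promote this to a solution of the original problem \eqref{main}, I would invoke the weak comparison principle (Lemma \ref{weak comparison}) against $\underline{u}_\lambda$: the construction of the truncated functional forces $v_n\geq \underline{u}_\lambda$ a.e., and this lower bound passes to the a.e. limit. Consequently $v_\lambda$ falls in the region where $\bar{f}\equiv f$; combined with Corollary \ref{strong maximum principle}, it follows that $v_\lambda>0$ in $\Omega$ and that $v_\lambda^{-\gamma}\phi\in L^1(\Omega)$ for every admissible test function, so Definition \ref{defn weak} is verified.

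The main obstacle is the critical regime $q=p_s^*$. There the embedding $X_0\hookrightarrow L^{q+1}(\Omega)$ fails to be compact, and $\int_\Omega |v_n|^{q+1}\,dx$ can be spoiled by concentration at points of $\overline{\Omega}$. The standard cure is a concentration-compactness decomposition à la Lions which shows that every bubble contributes at least $\tfrac{s}{N}S^{N/sp}$ to the energy; hence concentration must be excluded by verifying that the candidate PS-level $c$ lies strictly below this threshold. A secondary delicacy is ensuring that the a.e. lower bound $v_n\geq \underline{u}_\lambda$ is genuinely available for the PS sequence produced through Theorem \ref{ghosub}, which will depend on how the separating set $F$ and the mountain-pass geometry for $\bar{I}_\lambda$ are arranged in the subsequent multiplicity argument.
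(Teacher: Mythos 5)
Your plan follows the same overall route as the paper: use the $(PS)_{F,c}$ bounds on $\bar{I}_\lambda(v_n)$ and $\bar{I}_\lambda'(v_n)$ to extract a superlinear inequality $\|v_n\|^p\lesssim 1+\|v_n\|^{1-\gamma}+\|v_n\|$ forcing boundedness (the paper records a version of this as \eqref{4e3}), obtain a weak limit $v_\lambda$ by reflexivity, and pass to the limit in the Euler--Lagrange relation. The significant difference is how you justify convergence of the principal term: the paper simply asserts that
\begin{equation*}
\int_Q\frac{|v_n(x)-v_n(y)|^{p-2}(v_n(x)-v_n(y))(\phi(x)-\phi(y))}{|x-y|^{N+ps}}\,dxdy
\end{equation*}
passes to the same expression with $v_\lambda$ for every $\phi\in X_0$, which does not follow from weak convergence alone when $p\neq 2$. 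Your Minty/$(S_+)$ argument --- test with $v_n-v_\lambda$, invoke $\langle \bar{I}_\lambda'(v_n),v_n-v_\lambda\rangle\to 0$ together with the monotonicity identity \eqref{identity}, and upgrade to strong convergence in $X_0$ --- is the correct way to make this step rigorous and improves on what the paper prints. Your flagging of the critical case $q+1=p_s^*$ (where compactness of $X_0\hookrightarrow L^{q+1}$ fails) is also well taken; the paper's Lemma \ref{4l3} does not acknowledge it, and the resolution is implicit in the threshold verification carried out in Lemma \ref{case2}. One small correction to your closing paragraph: a PS sequence $v_n$ is not forced to satisfy $v_n\geq\underline{u}_\lambda$ a.e.; rather, the \emph{limit} $v_\lambda$, being a critical point of the truncated functional $\bar{I}_\lambda$, lies above $\underline{u}_\lambda$ via the weak comparison principle (Lemma \ref{weak comparison}) and strong maximum principle (Corollary \ref{strong maximum principle}), and only then does $\bar{f}(x,v_\lambda)=\lambda v_\lambda^{-\gamma}+v_\lambda^q$ so that $v_\lambda$ solves \eqref{main}.
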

	\begin{proof}
		We use the Definition \ref{ps defn}, which says, there exists $K>0$ such that the following holds
		\begin{align*}
		\frac{1}{p}\int_{Q}\cfrac{|u_\lambda(x)-u_\lambda(y)|^{p}}{|x-y|^{N+ps}}dxdy-&\int_{v_n>\underline{u}_\lambda}\left[\left(\frac{\lambda}{1-\gamma}v_n^{1-\gamma}+\frac{v_n^{q+1}}{q+1}\right)-\left(\frac{\lambda}{1-\gamma}\underline{u}_\lambda^{1-\gamma}+\frac{\underline{u}_\lambda^{q+1}}{q+1}\right)\right]dx\\
		-&\int_{v_n\leq\underline{u}_\lambda}v_n(\lambda\underline{u}_\lambda^{-\gamma}+\underline{u}_\lambda^q)dx\leq K
		\end{align*}
		this implies,
		\begin{equation}\label{4e1}
		\frac{1}{p}\int_{Q}\cfrac{|u_\lambda(x)-u_\lambda(y)|^{p}}{|x-y|^{N+ps}}dxdy-\int_{v_n>\underline{u}_\lambda}\left(\frac{\lambda}{1-\gamma}v_n^{1-\gamma}+\frac{v_n^{q+1}}{q+1}\right)dx\leq K
		\end{equation}
		Again, by using the Definition \ref{ps defn}, we get
		\begin{align}\label{4e2}
		\frac{1}{p}\int_{Q}\cfrac{|u_\lambda(x)-u_\lambda(y)|^{p}}{|x-y|^{N+ps}}dxdy&=\int_{v_n>\underline{u}_\lambda}(\lambda v_n^{1-\gamma}+v_n^{q+1})dx +\int_{v_n\leq\underline{u}_\lambda}v_n (\lambda\underline{u}_\lambda^{-\gamma} +\underline{u}_\lambda^q)dx\nonumber\\ &+ o_n(1)\|v_n\|
		\end{align}
		Therefore, from \eqref{4e1} and \eqref{4e2}, we have
		\begin{align}\label{4e3}
		\|v_n\|^p + O_n(\|v_n\|) &\geq\int_{v_n>\underline{u}} v_n^{q+1}dx\nonumber\\
		&\geq\frac{q+1}{p}\|v_n\|^p -K
		\end{align}
		By using \eqref{4e3} we can conclude that the sequence $\{v_n\}$ is bounded in $X_0$. Since the space $X_0$ is reflexive, there exists $v_\lambda\in X_0$ such that $v_n\rightharpoonup v_\lambda$ in $X_0$ upto a subsequence. Thus 
		\begin{align*}
		&\int_{Q}\frac{|v_n(x)-v_n(y)|^{p-2}(v_n(x)-v_n(y))(\phi(x)-\phi(y))}{|x-y|^{N+ps}}dxdy\\
		&\rightarrow\int_{Q}\frac{|v_\lambda(x)-v_\lambda(y)|^{p-2}(v_\lambda(x)-v_\lambda(y))(\phi(x)-\phi(y))}{|x-y|^{N+ps}}dxdy,~~\forall\,\phi\in X_0.
		\end{align*}
		Passing the limit as $n\rightarrow\infty$ and applying the embedding result in Lemma $2.4$ we have, for $\phi\in X_0$
		\begin{equation}\label{4e4}
		\int_{Q}\frac{|v_\lambda(x)-v_\lambda(y)|^{p-2}(v_\lambda(x)-v_\lambda(y))(\phi(x)-\phi(y))}{|x-y|^{N+ps}}dxdy
		-\int_{\Omega}(\frac{\lambda}{v_\lambda^{\gamma}}-v_\lambda^q)\phi=0
		\end{equation}
		Therefore, using the strong maximum principle  and \eqref{4e4} we conclude that $v_\lambda$ is a weak solution of the problem \eqref{main}. This completes the proof.
	\end{proof}
	\noindent We observe from Lemma \ref{subsuperlemma} and the fact that $\bar{I}_\lambda(tu)\rightarrow -\infty$ as $t\rightarrow\infty$ for all $u\in X_0, u>0$, we can conclude that $\bar{I}_\lambda$ has a Mountain pass geometry near $u_\lambda$. Therefore, we may fix $e\in X_0, e>0$ such that $\bar{I}_\lambda(e)<\bar{I}_\lambda(u_\lambda)$. Let $R=\|e-u_\lambda\|$ and $r_0>0$ be small enough such that $u_\lambda$ is a minimizer of $\bar{I}_\lambda$ on $\overline{B(u_\lambda,r_0)}$. Consider the following complete metric space consisting of paths which is defined as
	\begin{equation*}
	\Gamma=\left\{\eta\in C\left(\left[0,\frac{1}{2}\right], X_0\right): \eta(0)=u_\lambda, \eta\left(\frac{1}{2}\right)=e\right\}
	\end{equation*}
	and the min-max value for mountain pass level
	$$\delta_0=\underset{\eta\in\Gamma,}{\inf}\underset{0\leq t\leq\frac{1}{2}}{\max}I_\lambda(\eta(t))$$
	Let us distinguish between the following two cases.
	\begin{enumerate}
		\item[] Case 1: (Zero altitude case). There exists $R_0>0$, such that
		\begin{align}\label{zero altitude}
		\inf\left\{\bar{I}_\lambda(\tilde{u}): \tilde{u}\in X_0 ,~ \|\tilde{u}-u_\lambda\|=r\right\}&\leq\bar{I}_\lambda(u_\lambda),~~\text{for all}~~ r<R_0.
		\end{align}
		\item[]  Case 2: There exists $r_1<r_0$ such that 
		\begin{align}\label{nonzero altitude}
		\inf\left\{\bar{I}_\lambda(\tilde{u}): \tilde{u}\in X_0 ~~\&~~ \|\tilde{u}-u_\lambda\|=r_1\right\}&>\bar{I}_\lambda(u_\lambda).
		\end{align}
	\end{enumerate}
	\begin{remark}
		Observe that, \eqref{zero altitude} implies $\delta_0=\bar{I}_\lambda(u_\lambda)$, whereas \eqref{nonzero altitude} implies $\delta_0>\bar{I}_\lambda(u_\lambda).$
	\end{remark}
	\noindent For the ``{\it Zero altitude case}" let us consider $F=\partial B(u_\lambda, r)$ with $r\leq R_0$. We can then construct a $(PS)_{F, \delta_0}$ sequence and get a second weak solution. We have the following result.
	\begin{lemma}
		Suppose Case 1 holds, then for $1<p<\infty, ~p-1<q\leq p_s^{*}-1, ~0<\gamma<1$ and $\lambda\in(0,\Lambda)$, there exists a weak solution $v_\lambda$ of the problem \eqref{main} such that $v_\lambda\neq u_\lambda$. 
	\end{lemma}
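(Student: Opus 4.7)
The plan is to invoke the Ghoussoub–Preiss mountain pass theorem (Theorem \ref{ghosub}) with the set $F$ chosen as the sphere around $u_\lambda$ on which \eqref{zero altitude} is active, then extract a weak limit with Lemma \ref{4l3}, and finally rule out coincidence with $u_\lambda$ by combining the distance condition with an $(S_+)$-type argument.

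First I would fix $r < R_0$ (with $R_0$ from \eqref{zero altitude}) and $r < R = \|e - u_\lambda\|$, and set $F = \partial B(u_\lambda,r)$ inside $X_0$. The remark following \eqref{nonzero altitude} gives $\delta_0 = \bar{I}_\lambda(u_\lambda)$ in Case 1, so that $F \subset \{\bar{I}_\lambda \leq \delta_0\}$ by \eqref{zero altitude}. I would then check the separation hypothesis of Theorem \ref{ghosub} for $\varphi = \bar{I}_\lambda$, $u_\lambda$ and $e$: any continuous path $\eta \colon [0,\tfrac12] \to X_0$ joining $u_\lambda$ to $e$ must cross $\partial B(u_\lambda, r)$, because $\|e - u_\lambda\| = R > r$, so $F \cup \{\bar{I}_\lambda \geq \delta_0\}$ indeed separates $u_\lambda$ from $e$. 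Since $\bar{I}_\lambda$ is continuous and G\^ateaux differentiable on $X_0$ with the required weak$^*$-continuity of $\bar{I}'_\lambda$, Theorem \ref{ghosub} produces a sequence $\{v_n\} \subset X_0$ such that
\begin{equation*}
\mathrm{dist}(v_n,F) \to 0, \qquad \bar{I}_\lambda(v_n) \to \delta_0, \qquad \|\bar{I}'_\lambda(v_n)\|_{X_0^*} \to 0,
\end{equation*}
that is, a $(PS)_{F,\delta_0}$ sequence in the sense of Definition \ref{ps defn}.

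Next, Lemma \ref{4l3} applies directly: $\{v_n\}$ is bounded in $X_0$ and, up to a subsequence, $v_n \rightharpoonup v_\lambda$ weakly in $X_0$, with $v_\lambda$ a weak solution of \eqref{main}. It remains to show $v_\lambda \neq u_\lambda$. I would argue by contradiction, assuming $v_\lambda = u_\lambda$. Testing the PS condition against $v_n - u_\lambda$ (which is bounded in $X_0$) gives $\langle \bar{I}'_\lambda(v_n), v_n - u_\lambda\rangle \to 0$. The compact embeddings of Lemma \ref{embedding} yield $v_n \to u_\lambda$ strongly in $L^r(\Omega)$ for every $r < p_s^*$, whence the subcritical lower-order contributions $\int_\Omega \bar{f}(x,v_n)(v_n - u_\lambda)\,dx$ vanish in the limit (the singular piece is controlled because the truncation in $\bar{I}_\lambda$ keeps the integrand bounded away from the singularity). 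Consequently
\begin{equation*}
\langle (-\Delta_p)^s v_n, v_n - u_\lambda\rangle \to 0,
\end{equation*}
and the $(S_+)$-property of the fractional $p$-Laplacian (a standard monotonicity computation using the identity \eqref{identity} already invoked in Lemma \ref{weak comparison}) forces $v_n \to u_\lambda$ strongly in $X_0$. But $\mathrm{dist}(v_n,F) \to 0$ means $\|v_n - u_\lambda\| \to r > 0$, a contradiction. Therefore $v_\lambda \neq u_\lambda$, and $v_\lambda$ is the desired second weak solution.

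The main obstacle in this plan is the critical case $q = p_s^{*} - 1$, where compact embedding at the critical exponent fails and one cannot immediately conclude $\int_\Omega (v_n^+)^{q+1}(v_n - u_\lambda)\,dx \to 0$. To salvage this step I would exploit that $\delta_0 = \bar{I}_\lambda(u_\lambda)$ lies below the universal threshold $\tfrac{s}{N}S^{N/(ps)}$ for loss of compactness (by choosing $r$ small and comparing with the minimizer $u_\lambda$), and then apply a Brezis--Lieb / concentration-compactness decomposition to rule out bubbling of $v_n - u_\lambda$. Once compactness is restored, the same $(S_+)$ argument closes the contradiction, completing the proof.
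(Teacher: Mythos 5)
Your overall architecture matches the paper's: invoke Ghoussoub--Preiss with $F=\partial B(u_\lambda,r)$, $r<R_0$, obtain a $(PS)_{F,\delta_0}$ sequence, pass to a weak limit $v_\lambda$ via Lemma \ref{4l3}, and then upgrade to strong convergence so that $\mathrm{dist}(v_n,F)\to 0$ transfers to $\|v_\lambda-u_\lambda\|=r>0$. Where you diverge is in the mechanism for strong convergence. The paper first writes the Brezis--Lieb decomposition \eqref{4e7} of the norms, uses that $v_\lambda$ solves \eqref{main} to obtain \eqref{4e9}, derives the balance $\|v_n-v_\lambda\|^p=\int_\Omega|v_n-v_\lambda|^{q+1}\,dx+o(1)$ in \eqref{4e11}, and then splits into the two cases $\bar{I}_\lambda(v_\lambda)\neq\bar{I}_\lambda(u_\lambda)$ (trivial) and $\bar{I}_\lambda(v_\lambda)=\bar{I}_\lambda(u_\lambda)$, in which case the energy decomposition forces $(\tfrac1p-\tfrac1{q+1})\|v_n-v_\lambda\|^p\leq o(1)$ and hence norm convergence. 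This algebra works uniformly for subcritical and critical $q$, because the bubble would cost energy $\tfrac{s}{N}S^{N/(ps)}>0$ and the level is pinned at $\bar I_\lambda(u_\lambda)$. Your route replaces this with an $(S_+)$-type argument: test the PS condition with $v_n-v_\lambda$, show the lower-order terms vanish by compact embedding, deduce $\langle(-\Delta_p)^s v_n,v_n-v_\lambda\rangle\to0$, and invoke the $(S_+)$ property to upgrade to strong convergence. This is a genuine alternative and, in the subcritical case, is arguably cleaner since it avoids the paper's case split altogether. (Your proof-by-contradiction wrapper is unnecessary though: you may simply show $v_n\to v_\lambda$ strongly in $X_0$ and read off $\|v_\lambda-u_\lambda\|=r$ directly, with no need to first assume $v_\lambda=u_\lambda$.)

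The one genuine gap is the critical case $q=p_s^*-1$, which is included in the statement of the lemma. You correctly identify that the compact-embedding step fails there, and you sketch the right fix (the mountain-pass level $\delta_0=\bar{I}_\lambda(u_\lambda)$ is strictly below $\bar{I}_\lambda(u_\lambda)+\tfrac{s}{N}S^{N/(ps)}$, so concentration at level $\delta_0$ is excluded). Two remarks: (i) your phrasing says $\delta_0$ is below ``the universal threshold $\tfrac{s}{N}S^{N/(ps)}$'', but the relevant threshold is $\bar{I}_\lambda(u_\lambda)+\tfrac{s}{N}S^{N/(ps)}$, which $\delta_0$ lies below trivially in Case 1; this should be stated precisely. (ii) If you carry out the Brezis--Lieb rescue in detail, you end up with essentially the paper's estimates \eqref{4e11}--\eqref{4e13}, so the two methods collapse to the same computation there. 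If instead you want a single argument covering both exponent ranges, following the paper's Brezis--Lieb route from the start is cleaner than running $(S_+)$ in one regime and the energy decomposition in the other.
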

	\begin{proof}
		From Theorem \ref{ghosub} we can guarantee the existence of a $(PS)_{F, \delta_0}$ sequence $\{v_n\}$ for every $r\leq R_0$. From Lemma \ref{4l3}, we can conclude that the sequence $\{v_n\}$ is bounded in $X_0$ and it converges, upto a subsequence, to a weak solution $v_\lambda$ of the problem \eqref{main}. To show $v_\lambda\neq u_\lambda$, it is enough to show the strong convergence of $\{v_n\}$ to $v_\lambda$, i.e. $v_n\rightarrow v_\lambda$ strongly in $X_0$ as $n\rightarrow\infty$. Since $v_n\rightharpoonup v_\lambda$ weakly as $n\rightarrow\infty$ and from the embedding result $v_n\rightarrow v_\lambda$ in $L^r(\Omega)$ for $1\leq r<p_s^{*}$, hence $v_n(x)\rightarrow v_\lambda(x)$ a.e. in $\Omega$. We have the following result due to Brezis \& Lieb \cite{brezis1983relation}. As $n\rightarrow\infty$, we have
		\begin{align}\label{4e7}
		\begin{split}
		\|v_n\|&=\|v_n-v_\lambda\|+\|v_\lambda\|+o_n(1) ~~\text{and}\\
		\|v_n\|_{L^{q+1}(\Omega)}&=\|v_n-v_\lambda\|_{L^{q+1}(\Omega)}+\|v_\lambda\|_{L^{q+1}(\Omega)}+ o_n(1).
		\end{split}
		\end{align}
		Further, by the Sobolev embedding theorem, we get
		\begin{equation}
		\int_{v_n\geq \underline{u}_\lambda}|v_n^{1-\gamma}-v_\lambda^{1-\gamma}|dx=o_n(1) ~~\text{as}~~ n\rightarrow\infty
		\end{equation}
		Since $v_\lambda$ is a weak solution to the problem \eqref{main}, we get
		\begin{equation}\label{4e9}
		\|v_\lambda\|^p-\|v_\lambda\|_{L^{q+1}(\Omega)}^{q+1}-\lambda\int_\Omega v_\lambda^{1-\gamma}dx=0
		\end{equation}
		Therefore, by passing the limit $n\rightarrow\infty$ we obtain
		\begin{align}\label{4e10}
		\begin{split}
		&\int_{Q}\frac{|v_n(x)-v_n(y)|^{p-2}(v_n(x)-v_n(y))((v_n-v_\lambda)(x)-(v_n-v_\lambda)(y))}{|x-y|^{N+ps}}dxdy\\
		&=\lambda\int_{v_n\geq\underline{u}_{\lambda}}{v_n^{-\gamma}(v_n-v_\lambda)}dx +\int_\Omega v_n^q(v_n-v_\lambda)dx +o_n(1)
		\end{split}
		\end{align}
		Hence, by using \eqref{4e7}, \eqref{4e10} and \eqref{4e9} the following holds as $n\rightarrow\infty$
		\begin{equation}\label{4e11}
		\|v_n-v_\lambda\|^p=\int_\Omega |v_n-v_\lambda|^{q+1}dx + o_n(1).
		\end{equation}
		We now consider the following two cases
		\begin{itemize}
			\item[$(a)$.] $\bar{I}_\lambda(v_\lambda)\neq\bar{I}_\lambda(u_\lambda)$
			\item[$(b)$.] $\bar{I}_\lambda(v_\lambda)=\bar{I}_\lambda(u_\lambda)$
		\end{itemize}
		In case $(a)$ holds, then we are through. Otherwise, from \eqref{4e7} we get,
		\begin{equation}
		\bar{I}_\lambda(v_n-v_\lambda)=\bar{I}_\lambda(v_n)-\bar{I}_\lambda(v_\lambda)+o_n(1),  ~~\text{as}~~ n\rightarrow\infty.
		\end{equation}
		Consequently, from \eqref{4e9} we have 
		\begin{equation}\label{4e13}
		\frac{1}{p}\|v_n-v_\lambda\|^p -\frac{1}{q+1}\|v_n-v_\lambda\|_{L^{q+1}(\Omega)}^{q+1} \leq o_n(1), ~~\text{as}~~ n\rightarrow\infty.
		\end{equation}
		Therefore, from \eqref{4e11} and \eqref{4e13}, we get $\|v_n-v_\lambda\|\rightarrow0$ as $n\rightarrow\infty$. Hence $\|u_\lambda-v_\lambda\|=r$ and $v_\lambda\neq u_\lambda$. This completes the proof.
	\end{proof}
	
	\noindent Before we state the multiplicity result for Case 2, let us accumulate the necessary tools for this. Let $U(x)=(1+|x|^{p'})^{-\frac{N-sp}{p}}$ and $U_\epsilon(x)=\epsilon^{-\frac{N-sp}{p}}U(\frac{|x|}{\epsilon})$, where $\epsilon>0,~x\in\mathbb{R}^N$ and $p'=\frac{p}{p-1}$. Therefore,
	\begin{equation}\label{truncated u}
	U_\epsilon(x)=\cfrac{\epsilon^{(\frac{N-sp}{p})(\frac{p'}{p})}}{(\epsilon^{p'}+|x|^{p'})^{\frac{N-sp}{p}}}
	\end{equation}
	For a fixed $r>0$ such that,
	\begin{equation}\label{r}
	B_{4r}\subset\Omega
	\end{equation}
	let us consider, $\phi\in C_c^{\infty}(\mathbb{R}^N)$ as,
	$$\begin{cases}\label{phi}
	0\leq\phi\leq1,~~\text{in}~~\mathbb{R}^N\\
	\phi\equiv0,~~\text{in}~~\mathbb{R}^N\setminus B_{2r}\\
	\phi\equiv1~~\text{in}~~B_r.
	\end{cases}$$
	Henceforth, $r$ will denote any such number satisfying \eqref{r}. Consider the following nonnegative family of truncated functions
	\begin{equation}\label{eta}
	\eta_\epsilon(x)=U_\epsilon(x)\phi(x).
	\end{equation}
	We now prove the following proposition.
	
	\begin{proposition}
		Let $\rho>0.$ Then for every $\epsilon>0$ and for any $x\in\mathbb{R}^N\setminus B_\rho$, the following holds
		\begin{itemize}
			\item[(a)] $|\eta_\epsilon(x)|\leq C\epsilon^{\frac{(N-ps)}{p}(\frac{p'}{p})} $
			\item[(b)] $|\nabla\eta_\epsilon(x)|\leq C\epsilon^{\frac{(N-ps)}{p}(\frac{p'}{p})}$
		\end{itemize}
	\end{proposition}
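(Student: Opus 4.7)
The plan is to exploit the explicit formula for $\eta_\epsilon(x)=U_\epsilon(x)\phi(x)$ together with the two features of the cut-off $\phi$ listed in the excerpt: $0\leq \phi\leq 1$ and $\nabla\phi$ is bounded and supported in $B_{2r}\setminus B_r$. Both estimates are then routine pointwise computations; no compactness, rearrangement, or PDE argument is needed.

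For part (a), since $|\phi|\leq 1$ it suffices to bound $U_\epsilon$ itself. Starting from the explicit expression \eqref{truncated u}, on $\mathbb{R}^N\setminus B_\rho$ one has $\epsilon^{p'}+|x|^{p'}\geq |x|^{p'}\geq \rho^{p'}$, which applied to the denominator immediately gives
\begin{equation*}
U_\epsilon(x)\leq \frac{\epsilon^{\frac{(N-sp)}{p}\cdot\frac{p'}{p}}}{\rho^{\frac{(N-sp)p'}{p}}},
\end{equation*}
so the claim holds with $C=\rho^{-(N-sp)p'/p}$.

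For part (b), I would apply the product rule $\nabla\eta_\epsilon=\phi\nabla U_\epsilon + U_\epsilon\nabla\phi$ and estimate the two terms separately. The second term is easy: $\nabla\phi$ is bounded and supported in $B_{2r}\setminus B_r$, so either it vanishes (when $\rho\geq 2r$) or one invokes part (a) with $\min(\rho,r)$ in place of $\rho$. For the first term, a direct differentiation of \eqref{truncated u} gives
\begin{equation*}
|\nabla U_\epsilon(x)|=\frac{(N-sp)p'}{p}\,\epsilon^{\frac{(N-sp)}{p}\cdot\frac{p'}{p}}\,\frac{|x|^{p'-1}}{\bigl(\epsilon^{p'}+|x|^{p'}\bigr)^{\frac{N-sp}{p}+1}}.
\end{equation*}
Dropping $\epsilon^{p'}$ in the denominator and simplifying the powers of $|x|$ produces $|x|^{-1-(N-sp)p'/p}$, which on $\mathbb{R}^N\setminus B_\rho$ is bounded by $\rho^{-1-(N-sp)p'/p}$. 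Combining both contributions gives the required bound for $|\nabla\eta_\epsilon|$ with a constant depending only on $N$, $s$, $p$, $r$, $\rho$, and $\|\nabla\phi\|_\infty$.

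There is no substantive obstacle; the only bookkeeping care is to verify that the exponent $\frac{(N-sp)}{p}\cdot\frac{p'}{p}$ on $\epsilon$ is preserved in both pieces. This is immediate once one notes the identity $p'-1=p'/p$ (a consequence of $p'=p/(p-1)$), which is precisely how the numerator $\epsilon^{-\frac{N-sp}{p}}\cdot \epsilon^{p'\frac{N-sp}{p}}$ arising from the definition $U_\epsilon(x)=\epsilon^{-(N-sp)/p}U(|x|/\epsilon)$ collapses to the form displayed in \eqref{truncated u}.
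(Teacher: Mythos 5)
Your proof is correct and follows essentially the same route as the paper: in both, part (a) is obtained by dropping the $\epsilon^{p'}$ summand in the denominator of \eqref{truncated u} once $|x|\geq\rho$, and part (b) by applying the product rule to $\eta_\epsilon=U_\epsilon\phi$ and making the analogous elementary pointwise bound on $\nabla U_\epsilon$.
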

	\begin{proof}
		\begin{itemize}
			\item[$(a)$] We have that, $U_\epsilon(x)=\epsilon^{-\frac{N-sp}{p}}U(\frac{|x|}{\epsilon})$. Thus, for $x\in B_\rho^c$ we have
			\begin{align}\label{eta estimate}
			|\eta_\epsilon(x)|&\leq U_\epsilon(x)\nonumber\\
			&\leq \epsilon^{-\frac{(N-sp)}{p}}\left(1+\left|\frac{\rho}{\epsilon}\right|^{p'}\right)^{-\frac{N-sp}{p}}\nonumber\\
			&\leq C\epsilon^{\frac{(N-ps)}{p}(p'-1)}\nonumber\\
			&= C\epsilon^{\frac{(N-ps)}{p}(\frac{p'}{p})}
			\end{align}
			\noindent This proves $(a)$.
			
			\item [$(b)$] For any $x\in B_\rho^c$ we have,
			\begin{align}\label{grad eta}
			|\nabla\eta_\epsilon(x)|&\leq C\epsilon^{-\frac{(N-sp)}{p}}\left[\left(1+\left|\frac{x}{\epsilon}\right|^{p'}\right)^{-\frac{N-sp}{p}}+ \frac{1}{\epsilon}\left|\frac{x}{\epsilon}\right|^{p'-1} \left(1+\left|\frac{x}{\epsilon}\right|^{p'}\right)^{-1-\frac{N-sp}{p}}\right]\nonumber\\
			& \leq C\epsilon^{-\frac{(N-sp)}{p}}\left[\left(1+\left|\frac{x}{\epsilon}\right|^{p'}\right)^{-\frac{N-sp}{p}}+ \frac{1}{\rho}\left|\frac{x}{\epsilon}\right|^{p'} \left(1+\left|\frac{x}{\epsilon}\right|^{p'}\right)^{-1-\frac{N-sp}{p}}\right]\nonumber\\
			&\leq C\epsilon^{-\frac{(N-sp)}{p}}\left(1+\frac{1}{\rho}\right) \left(1+\left|\frac{x}{\epsilon}\right|^{p'}\right)^{-\frac{N-sp}{p}}\nonumber\\
			&\leq C\epsilon^{-\frac{(N-sp)}{p}}\left(1+\frac{1}{\rho}\right) \left(\frac{\epsilon}{\rho}\right)^{\frac{(N-sp)p'}{p}}\nonumber\\
			&\leq C\epsilon^{-\frac{(N-sp)}{p}}.\epsilon^{\frac{(N-sp)p'}{p}}\nonumber\\
			&\leq C\epsilon^{\frac{(N-sp)}{p}(\frac{p'}{p})}
			\end{align}
		\end{itemize}
		Hence the proof.
	\end{proof}
	
	\begin{proposition}
		Let $r>0$ be as chosen in \eqref{r}. Then we have the following
		\begin{itemize}
			\item[(a)] For every $\epsilon>0$ and any $x\in\mathbb{R}^N$, $y\in\mathbb{R}^N\setminus B_r$ with $|x-y|\leq\frac{r}{2}$
			\begin{equation*}
			|\eta_\epsilon(x)-\eta_\epsilon(y)|\leq C\epsilon^{\frac{(N-ps)}{p}(\frac{p'}{p})}|x-y|.
			\end{equation*}
			\item[(b)] For every $\epsilon>0$ and any $x,~y\in\mathbb{R}^N\setminus B_r$,
			\begin{equation*}
			|\eta_\epsilon(x)-\eta_\epsilon(y)|\leq C\epsilon^{\frac{(N-ps)}{p}(\frac{p'}{p})}\min\{1, |x-y|\}.
			\end{equation*}
		\end{itemize}
	\end{proposition}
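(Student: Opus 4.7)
The plan is to deduce both statements directly from the bounds
$|\eta_\epsilon(z)|\le C\epsilon^{\frac{(N-ps)}{p}(\frac{p'}{p})}$ and
$|\nabla\eta_\epsilon(z)|\le C\epsilon^{\frac{(N-ps)}{p}(\frac{p'}{p})}$ for $z$ away from the origin, established in the preceding proposition. Recall that $\eta_\epsilon=U_\epsilon\phi$ is smooth on $\mathbb{R}^N$ (it is even $C^\infty$ away from $0$), so the Mean Value Theorem is applicable on line segments.

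For part $(a)$, I would first verify that the closed segment $[x,y]=\{tx+(1-t)y:0\le t\le 1\}$ stays in $\mathbb{R}^N\setminus B_{r/2}$ under the hypotheses $|y|\ge r$ and $|x-y|\le r/2$. Indeed, for any $z\in[x,y]$, $|z|\ge |y|-|z-y|\ge |y|-|x-y|\ge r-r/2=r/2$. The previous proposition, applied with $\rho=r/2$, then gives $|\nabla\eta_\epsilon(z)|\le C\epsilon^{\frac{(N-ps)}{p}(\frac{p'}{p})}$ uniformly for $z\in[x,y]$. The Mean Value Theorem then yields
\begin{equation*}
|\eta_\epsilon(x)-\eta_\epsilon(y)|\le \sup_{z\in[x,y]}|\nabla\eta_\epsilon(z)|\,|x-y|\le C\epsilon^{\frac{(N-ps)}{p}(\frac{p'}{p})}|x-y|,
\end{equation*}
which is exactly $(a)$.

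For part $(b)$, I would split into cases according to the size of $|x-y|$. If $|x-y|\le r/2$, then both $x,y\in\mathbb{R}^N\setminus B_r$ and part $(a)$ applies, giving $|\eta_\epsilon(x)-\eta_\epsilon(y)|\le C\epsilon^{\frac{(N-ps)}{p}(\frac{p'}{p})}|x-y|$. If $|x-y|>r/2$, I would instead invoke the pointwise bound: by the triangle inequality and the previous proposition (with $\rho=r$),
\begin{equation*}
|\eta_\epsilon(x)-\eta_\epsilon(y)|\le |\eta_\epsilon(x)|+|\eta_\epsilon(y)|\le 2C\epsilon^{\frac{(N-ps)}{p}(\frac{p'}{p})}.
\end{equation*}
Absorbing the size of $r$ into the constant, these two estimates combine to yield $|\eta_\epsilon(x)-\eta_\epsilon(y)|\le C\epsilon^{\frac{(N-ps)}{p}(\frac{p'}{p})}\min\{1,|x-y|\}$ in all cases.

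The main (mild) obstacle is bookkeeping in the case split for $(b)$: one must track the interplay between the thresholds $r/2$ and $1$ so that the final constant absorbs factors like $\max(1,2/r)$. Beyond that, both estimates are genuine consequences of the elementary pointwise bounds from the preceding proposition, so no new analytic ingredient is required.
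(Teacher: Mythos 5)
Your proof is correct and takes essentially the same route as the paper: for $(a)$, the geometric check that the segment $[x,y]$ stays outside $B_{r/2}$, then the gradient bound with $\rho=r/2$ and the Mean Value Theorem; for $(b)$, the case split on $|x-y|$ against $r/2$, reducing to $(a)$ in one case and to the pointwise bound on $|\eta_\epsilon|$ in the other. Your explicit remark on absorbing the $r$-dependent factors into the constant is a minor bookkeeping point that the paper leaves implicit, but the argument is the same.
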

	\begin{proof}
		\begin{itemize}
			\item[$(a)$] For $x\in\mathbb{R}^N$, $y\in\mathbb{R}^N\setminus B_r$ with $|x-y|\leq\frac{r}{2}$, let $z$ be any point on the line segment joining $x$ and $y$, i.e. $z=tx+(1-t)y$ for some $t\in[0,1].$ Observe that
			\begin{equation}\label{z}
			|z|=|tx+(1-t)y|\geq |y|-|t(x-y)|\geq r-t\frac{r}{2}\geq \frac{r}{2}.
			\end{equation}
			Therefore, with the help of \eqref{grad eta}, \eqref{z}, we have  $|\nabla\eta_\epsilon(x)|\leq C\epsilon^{\frac{(N-ps)}{p}(\frac{p'}{p})}$ for $\rho=\frac{r}{2}.$ Hence,
			\begin{equation}\label{eta difference}
			|\eta_\epsilon(x)-\eta_\epsilon(y)|\leq C\epsilon^{\frac{(N-ps)}{p}(\frac{p'}{p})}|x-y|
			\end{equation}
			This proves the $(a).$
			\item[$(b)$] We may assume $|x-y|\geq\frac{r}{2}$, for otherwise the proof follows from part $(a)$. Therefore, from \eqref{eta estimate}, we have
			\begin{equation*}
			|\eta_\epsilon(x)-\eta_\epsilon(y)|\leq |\eta_\epsilon(x)|+|\eta_\epsilon(y)|\leq C\epsilon^{\frac{(N-ps)}{p}(\frac{p'}{p})}.
			\end{equation*}
			Thus,
			\begin{equation}\label{eta difference 1}
			|\eta_\epsilon(x)-\eta_\epsilon(y)|\leq C\epsilon^{\frac{(N-ps)}{p}(\frac{p'}{p})}\min\{1, |x-y|\}
			\end{equation}
		\end{itemize}
		This completes the proof of $(b)$.
	\end{proof}
	
	\begin{proposition}For every sufficiently small $\epsilon>0$ we have,
		\begin{equation*}
		\int_Q \cfrac{|\eta_\epsilon(x)-\eta_\epsilon(y)|^p}{|x-y|^{N+ps}}dxdy\leq2^p S^{\frac{N}{ps}}+o(\epsilon^{(N-ps)(\frac{p'}{p})})
		\end{equation*}
		where, $S$ is the best Sobolev constant.
	\end{proposition}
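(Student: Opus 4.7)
The plan is to reduce the problem to a computation on the extremal $U_\epsilon$, which is already known to realize the Sobolev constant $S$, and then to show that the truncation by $\phi$ produces only an error of the required order.

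First, I would use the support properties of $\eta_\epsilon$ to replace the integration domain $Q$ by the full space $\mathbb{R}^{2N}$. Since $B_{2r}\subset\Omega$ by \eqref{r} and $\eta_\epsilon$ vanishes outside $B_{2r}$, we have $\eta_\epsilon\equiv 0$ on $\mathbb{R}^N\setminus\Omega$, so the integrand vanishes identically on $(\mathbb{R}^N\setminus\Omega)\times(\mathbb{R}^N\setminus\Omega)$ and therefore
\begin{equation*}
\int_Q \frac{|\eta_\epsilon(x)-\eta_\epsilon(y)|^p}{|x-y|^{N+ps}}dx\,dy=\int_{\mathbb{R}^{2N}}\frac{|\eta_\epsilon(x)-\eta_\epsilon(y)|^p}{|x-y|^{N+ps}}dx\,dy.
\end{equation*}

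Next, I would split $\eta_\epsilon(x)-\eta_\epsilon(y)$ via the algebraic identity $\eta_\epsilon(x)-\eta_\epsilon(y)=\phi(x)(U_\epsilon(x)-U_\epsilon(y))+U_\epsilon(y)(\phi(x)-\phi(y))$ and apply the inequality $(a+b)^p\leq 2^p(a^p+b^p)$ together with $0\leq\phi\leq 1$ to obtain
\begin{equation*}
|\eta_\epsilon(x)-\eta_\epsilon(y)|^p \leq 2^p|U_\epsilon(x)-U_\epsilon(y)|^p + 2^p\, U_\epsilon(y)^p|\phi(x)-\phi(y)|^p.
\end{equation*}
Integrating the first term against $|x-y|^{-N-ps}$ over $\mathbb{R}^{2N}$ gives exactly $2^p$ times the Gagliardo seminorm of the Sobolev extremal $U_\epsilon$, which equals $2^p S^{N/(ps)}$ by \eqref{sobolev const} and the scale invariance of the quotient under $U\mapsto U_\epsilon$. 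This produces the main term of the claimed bound.

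The main obstacle, and the heart of the argument, is to show that
\begin{equation*}
\mathcal{E}(\epsilon):=\int_{\mathbb{R}^{2N}}\frac{U_\epsilon(y)^p|\phi(x)-\phi(y)|^p}{|x-y|^{N+ps}}dx\,dy=o\bigl(\epsilon^{(N-ps)(p'/p)}\bigr).
\end{equation*}
Here I would exploit the fact that $\phi(x)-\phi(y)$ vanishes whenever $x,y\in B_r$ or $x,y\in\mathbb{R}^N\setminus B_{2r}$, so the only contributions come from pairs $(x,y)$ with at least one coordinate in the annulus $B_{2r}\setminus B_r$. On such pairs the Lipschitz character of $\phi$ gives $|\phi(x)-\phi(y)|\leq C\min\{1,|x-y|\}$, which is exactly the scaling extracted in the preceding pointwise estimates. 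Splitting into the regimes $|x-y|\leq 1$ and $|x-y|>1$, I would use $|\phi(x)-\phi(y)|^p\leq C|x-y|^p$ in the near diagonal to tame the kernel singularity (reducing the exponent to $|x-y|^{p-N-ps}$, which is locally integrable for $s<1$) and the bounded estimate $|\phi(x)-\phi(y)|^p\leq C$ in the far field (where $|x-y|^{-N-ps}$ is integrable). The key scaling step is the change of variables $y=\epsilon z$ in $U_\epsilon(y)^p=\epsilon^{-(N-ps)}U(y/\epsilon)^p$; since $U\in L^{p_s^*}(\mathbb{R}^N)$ and decays like $|z|^{-(N-ps)p'/p}$ at infinity, the explicit $\epsilon$-dependence that emerges exactly matches $\epsilon^{(N-ps)(p'/p)}$, with the remaining integral going to $0$ because the region of integration in $z$-variables escapes to infinity as $\epsilon\to 0^+$ (the cutoff $\phi$ varies only outside $B_r$, i.e.\ on $|y|\geq r$, which in rescaled coordinates is $|z|\geq r/\epsilon\to\infty$). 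This gives the desired $o$-bound and completes the proof.
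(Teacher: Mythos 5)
The algebraic identity $\eta_\epsilon(x)-\eta_\epsilon(y)=\phi(x)(U_\epsilon(x)-U_\epsilon(y))+U_\epsilon(y)(\phi(x)-\phi(y))$ and the treatment of the main term are fine, but the estimate of the cross term
\begin{equation*}
\mathcal{E}(\epsilon)=\int_{\mathbb{R}^{2N}}\frac{U_\epsilon(y)^p|\phi(x)-\phi(y)|^p}{|x-y|^{N+ps}}\,dx\,dy
\end{equation*}
has a genuine gap, and the approach is different from the paper's in a way that actually fails for part of the parameter range. Your final rescaling step asserts that the relevant region in $z=y/\epsilon$ variables escapes to infinity because ``$\phi$ varies only outside $B_r$''. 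That reasoning conflates ``$\nabla\phi$ is supported in $B_{2r}\setminus B_r$'' with ``$\phi(x)-\phi(y)=0$''. In a nonlocal difference, $\phi(x)-\phi(y)$ is nonzero whenever $x$ and $y$ lie on opposite sides of the cutoff region, regardless of how close $y$ is to the origin; for instance $x\in B_{2r}^c$, $y\in B_{r/2}$ gives $\phi(x)-\phi(y)=-1$, and neither point is in the annulus (so your remark that only pairs with a coordinate in $B_{2r}\setminus B_r$ contribute is also off). Consequently the inner integral $\Phi(y):=\int_{\mathbb{R}^N}|\phi(x)-\phi(y)|^p|x-y|^{-N-ps}\,dx$ is bounded below by a positive constant $c$ for all $|y|\leq r/2$, because $\Phi(y)\geq\int_{|x|\geq 2r}|x-y|^{-N-ps}\,dx\geq c>0$ there. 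This is precisely the region where $U_\epsilon(y)^p$ is largest, and one gets
\begin{equation*}
\mathcal{E}(\epsilon)\;\geq\; c\int_{|y|\leq\epsilon}U_\epsilon(y)^p\,dy\;\geq\; c\,2^{-(N-sp)}\epsilon^{-(N-sp)}|B_\epsilon|\;=\;c'\epsilon^{sp}.
\end{equation*}
But $\epsilon^{sp}=o\bigl(\epsilon^{(N-ps)p'/p}\bigr)$ holds if and only if $sp>(N-sp)/(p-1)$, i.e. $sp^2>N$. Under the paper's standing hypothesis $N>ps$, the opposite inequality $sp^2\leq N$ is perfectly possible (take $s$ small), and then your error term is of larger order than the claimed $o(\epsilon^{(N-ps)(p'/p)})$; the decomposition simply cannot deliver the bound in general.

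The paper avoids this by splitting the \emph{domain} rather than factoring the difference. On $B_r\times B_r$ the integrand is the pure $U_\epsilon$ seminorm; on the mixed region $\mathbb{A}$ (where $x\in B_r$, $y\in B_r^c$, $|x-y|>r/2$) the paper writes $\eta_\epsilon(x)-\eta_\epsilon(y)=(U_\epsilon(x)-U_\epsilon(y))+(U_\epsilon(y)-\eta_\epsilon(y))$, so the remainder carries $U_\epsilon(y)(1-\phi(y))$ evaluated only at $|y|\geq r$, where $U_\epsilon(y)\leq C\epsilon^{(N-ps)(p'/p)/p}$ uniformly by Proposition 4.8. On the remaining regions ($\mathbb{B}$ and $B_r^c\times B_r^c$) the pointwise bounds of Proposition 4.9 apply directly. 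The point your proposal misses is that the remainder must carry $U_\epsilon$ evaluated \emph{away from the origin}; to repair your argument you would need to first restrict to $y\in B_r^c$ (where $U_\epsilon$ is already small) before introducing any $U_\epsilon\cdot(\phi(x)-\phi(y))$-type factor, which is effectively what the paper's domain decomposition does.
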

	\begin{proof}
		We will use the previous propositions to establish this estimate. Let $r>0$ be chosen as in \eqref{r}. Then, on using \eqref{eta}, we have
		\begin{align}\label{gagliardo split}
		\begin{split}
		\int_{\mathbb{R}^N\times\mathbb{R}^N}\cfrac{|\eta_\epsilon(x)-\eta_\epsilon(y)|^p}{|x-y|^{N+ps}}dxdy&=\int_{B_r\times B_r}\cfrac{|U_\epsilon(x)-U_\epsilon(y)|^p}{|x-y|^{N+ps}}dxdy\\
		&+2\int_{\mathbb{A}}\cfrac{|\eta_\epsilon(x)-\eta_\epsilon(y)|^p}{|x-y|^{N+ps}}dxdy\\
		&+2\int_{\mathbb{B}}\cfrac{|\eta_\epsilon(x)-\eta_\epsilon(y)|^p}{|x-y|^{N+ps}}dxdy\\
		&+\int_{B_r^c\times B_r^c}\cfrac{|\eta_\epsilon(x)-\eta_\epsilon(y)|^p}{|x-y|^{N+ps}}dxdy
		\end{split}
		\end{align}
		where, $\mathbb{A}=\{(x,y)\in\mathbb{R}^N\times\mathbb{R}^N: x\in B_r, ~y\in B_r^c~\text{and}~|x-y|>\frac{r}{2}\} ~\text{and}~ \mathbb{B}=\{(x,y)\in\mathbb{R}^N\times\mathbb{R}^N: x\in B_r, ~y\in B_r^c~\text{and}~|x-y|\leq\frac{r}{2}\}.$ We will try to estimate the last three terms of \eqref{gagliardo split}. From \eqref{eta difference 1}, we have
		\begin{align}\label{estimate on compliment}
		\begin{split}
		\int_{B_r^c\times B_r^c}\cfrac{|\eta_\epsilon(x)-\eta_\epsilon(y)|^p}{|x-y|^{N+ps}}dxdy&\leq C\epsilon^{(N-ps)(\frac{p'}{p})}\int_{B_{2r}\times\mathbb{R}^N}\cfrac{\min\{1, |x-y|^p\}}{|x-y|^{N+ps}}dxdy\\ &=o(\epsilon^{(N-ps)(\frac{p'}{p})}),~~\text{as}~\epsilon\rightarrow0.
		\end{split}
		\end{align}
		On the other hand, from \eqref{eta difference} we have,
		\begin{align}\label{esimate on B}
		\begin{split}
		\int_{\mathbb{B}}\cfrac{|\eta_\epsilon(x)-\eta_\epsilon(y)|^p}{|x-y|^{N+ps}}dxdy&\leq C\epsilon^{(N-ps)(\frac{p'}{p})}\int_{\{x\in B_r, ~y\in B_r^c~|x-y|\leq\frac{r}{2}\}}\cfrac{|x-y|^p}{|x-y|^{N+ps}}dxdy\\
		&\leq C\epsilon^{(N-ps)(\frac{p'}{p})}\int_{|x|\leq r}dx\int_{|z|\leq\frac{r}{2}}\frac{1}{|z|^{N+ps-p}}dz\\
		&=o(\epsilon^{(N-ps)(\frac{p'}{p})}),~~\text{as}~\epsilon\rightarrow0.
		\end{split}
		\end{align}
		Now, the only estimate remains to be proved is the integral over $\mathbb{A}$ in \eqref{gagliardo split}, which is the following
		\begin{equation}\label{estimate on A}
		\int_{\mathbb{A}}\cfrac{|\eta_\epsilon(x)-\eta_\epsilon(y)|^p}{|x-y|^{N+ps}}dxdy
		\end{equation}
		Since, $\eta_\epsilon(x)=U_\epsilon(x)$ in $B_r$, we have
		\begin{align}\label{p formula}
		\begin{split}
		(|\eta_\epsilon(x)-\eta_\epsilon(y)|^p)&\leq|U_\epsilon(x)-\eta_\epsilon(y)|^p\\
		&=|U_\epsilon(x)-U_\epsilon(y)+U_\epsilon(y)-\eta_\epsilon(y)|^p\\
		&\leq(|U_\epsilon(x)-U_\epsilon(y)|+|U_\epsilon(y)-\eta_\epsilon(y)|)^p\\
		&\leq2^{p-1}(|U_\epsilon(x)-U_\epsilon(y)|^p+|U_\epsilon(y)-\eta_\epsilon(y)|^p)
		\end{split}
		\end{align}
		On using \eqref{p formula} in \eqref{estimate on A}, the integral becomes
		\begin{align}\label{estimate on A1}
		\int_{\mathbb{A}}\cfrac{|\eta_\epsilon(x)-\eta_\epsilon(y)|^p}{|x-y|^{N+ps}}dxdy\leq2^{p-1}&\int_{\mathbb{A}}\cfrac{|U_\epsilon(x)-U_\epsilon(y)|^p}{|x-y|^{N+ps}}dxdy\nonumber\\
		&+2^{p-1}\int_{\mathbb{A}}\cfrac{|U_\epsilon(y)-\eta_\epsilon(y)|^p}{|x-y|^{N+ps}}dxdy
		\end{align}
		We will estimate the last term of \eqref{estimate on A1}. From \eqref{eta estimate} for $\rho=r$, when $\epsilon\rightarrow0$ we have 
		\begin{align}\label{estimate on A2}
		\begin{split}
		\int_{\mathbb{A}}\cfrac{|U_\epsilon(y)-\eta_\epsilon(y)|^p}{|x-y|^{N+ps}}dxdy&\leq\int_{\mathbb{A}}\cfrac{\left(|U_\epsilon(y)|+|\eta_\epsilon(y)|\right)^p}{|x-y|^{N+ps}}dxdy\\
		&\leq C\int_{\mathbb{A}}\cfrac{|U_\epsilon(y)|^p}{|x-y|^{N+ps}}dxdy\\
		&\leq C\epsilon^{(N-ps)(\frac{p'}{p})}\int_{\{x\in B_r, ~y\in B_r^c~|x-y|>\frac{r}{2}\}}\cfrac{|x-y|^p}{|x-y|^{N+ps}}dxdy\\
		&\leq C\epsilon^{(N-ps)(\frac{p'}{p})}\int_{|x|\leq r}dx\int_{|z|>\frac{r}{2}}\frac{1}{|z|^{N+ps-p}}dz\\
		&=o(\epsilon^{(N-ps)(\frac{p'}{p})}).
		\end{split}
		\end{align}
		Therefore, by using \eqref{estimate on compliment}, \eqref{esimate on B}, \eqref{estimate on A1} and \eqref{estimate on A2}, we have
		\begin{align*}
		\begin{split}
		\int_{\mathbb{R}^N\times\mathbb{R}^N}\cfrac{|\eta_\epsilon(x)-\eta_\epsilon(y)|^p}{|x-y|^{N+ps}}dxdy&=\int_{B_r\times B_r}\cfrac{|U_\epsilon(x)-U_\epsilon(y)|^p}{|x-y|^{N+ps}}dxdy\\
		&\hspace{0.5cm}+2^{p-1}\int_{\mathbb{A}}\cfrac{|U_\epsilon(x)-U_\epsilon(y)|^p}{|x-y|^{N+ps}}dxdy+ o(\epsilon^{(N-ps)(\frac{p'}{p})})\\
		&\leq2^p\int_{\mathbb{R}^N\times\mathbb{R}^N}\cfrac{|U_\epsilon(x)-U_\epsilon(y)|^p}{|x-y|^{N+ps}}dxdy+ o(\epsilon^{(N-ps)(\frac{p'}{p})})
		\end{split}
		\end{align*}
		For every $\epsilon>0$, the functions $U_\epsilon(x)$ are the minimizer of the problem
		\begin{align*}
		(-\Delta_p)^s u&=|u|^{p_s^*-2}u,~\text{in}~\Omega\\
		u&=0,~\text{on}~\partial\Omega
		\end{align*} and hence satisfies the following equality
		\begin{equation*}
		\int_{\mathbb{R}^N\times\mathbb{R}^N}\cfrac{|U_\epsilon(x)-U_\epsilon(y)|^p}{|x-y|^{N+ps}}dxdy=\int_{\mathbb{R}^N}|U_\epsilon(x)|^{p_s^*}dx=S^{\frac{N}{ps}}
		\end{equation*}
		Hence, we get
		\begin{equation}\label{gagliardo final}
		\int_Q \cfrac{|\eta_\epsilon(x)-\eta_\epsilon(y)|^p}{|x-y|^{N+ps}}dxdy
		\leq2^pS^{\frac{N}{ps}}+o(\epsilon^{{(N-ps)}(\frac{p'}{p})})
		\end{equation}
		This completes the proof.
	\end{proof}
	\begin{proposition}For a sufficiently small $\epsilon>0$ we have,
		\begin{itemize}
			\item[(a)] $\int_\Omega|\eta_\epsilon|^{\beta}dx\leq C\epsilon^{\left(\frac{N-ps}{p}\right)(\frac{p'}{p})\beta}$
			\item[(b)] $\int_\Omega|\eta_\epsilon|^{q+1}dx\geq C\epsilon^{N-\left(\frac{N-ps}{p}\right)(q+1)}$ 
		\end{itemize}
	\end{proposition}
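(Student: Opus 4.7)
My plan is to reduce both inequalities to concrete computations on the Aubin--Talenti-type function $U_\epsilon$, exploiting two simple facts: $|\eta_\epsilon|\leq U_\epsilon$ everywhere (so $U_\epsilon$ gives the upper bound), and $\eta_\epsilon\equiv U_\epsilon$ on $B_r$ (which gives the matching lower bound). The change of variables $y=x/\epsilon$ together with the scaling identity $U_\epsilon(x)=\epsilon^{-(N-sp)/p}U(x/\epsilon)$ will turn both integrals into explicit $\epsilon$-independent integrals of $(1+|y|^{p'})^{-\theta}$ over balls whose radius depends on $\epsilon$.

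For part (a), since $\operatorname{supp}\eta_\epsilon\subset B_{2r}$ and $|\eta_\epsilon|\leq U_\epsilon$, I will estimate
\begin{equation*}
\int_{\Omega}|\eta_\epsilon|^\beta\,dx\;\leq\;\int_{B_{2r}}U_\epsilon^\beta\,dx
\;=\;\epsilon^{N-\beta(N-sp)/p}\int_{B_{2r/\epsilon}}(1+|y|^{p'})^{-\beta(N-sp)/p}\,dy.
\end{equation*}
In the relevant range of $\beta$ (small enough that the integrand fails to be integrable on $\mathbb{R}^N$), the inner integral grows like $(r/\epsilon)^{N-\beta(N-sp)p'/p}$. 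Multiplying by the prefactor cancels the $\epsilon^N$ and leaves $\epsilon^{\beta(N-sp)(p'-1)/p}$; since $p'-1=p'/p$, this is exactly $\epsilon^{(\frac{N-sp}{p})(\frac{p'}{p})\beta}$, as claimed.

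For part (b), the lower bound is easier because we need only test on a piece where we have equality. Using $\eta_\epsilon=U_\epsilon$ on $B_r$,
\begin{equation*}
\int_\Omega|\eta_\epsilon|^{q+1}\,dx\;\geq\;\int_{B_r}U_\epsilon^{q+1}\,dx
\;=\;\epsilon^{N-(q+1)(N-sp)/p}\int_{B_{r/\epsilon}}(1+|y|^{p'})^{-(q+1)(N-sp)/p}\,dy.
\end{equation*}
For $\epsilon<r$, I will simply restrict the inner integral to the fixed ball $B_1$, on which the integrand is bounded below by a positive constant, yielding the constant lower bound needed to isolate the exponent $\epsilon^{N-(\frac{N-sp}{p})(q+1)}$.

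The main obstacle will be (a): the $\epsilon$-scaling of $\int_{B_{2r/\epsilon}}(1+|y|^{p'})^{-\beta(N-sp)/p}\,dy$ depends sensitively on whether $\beta(N-sp)p'/p$ is above, equal to, or below $N$, and the claimed exponent matches the natural scaling only in the sub-threshold regime $\beta<N(p-1)/(N-sp)$. I would state the proposition for this range (which is the one used later for the mountain-pass analysis and contains the powers $1-\gamma$ and $p$ that will actually appear), treat the divergent-integral case by integrating in polar coordinates to extract the explicit power of $r/\epsilon$, and handle the borderline case with an additional logarithmic term absorbed into the $o(\cdot)$ notation. Part (b) carries essentially no analytic difficulty beyond this scaling bookkeeping.
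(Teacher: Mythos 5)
Your proposal is correct, and for part (a) it is actually \emph{more careful} than the argument given in the paper. The paper's proof of (a) is a single line citing the pointwise estimate $|\eta_\epsilon(x)|\leq C\epsilon^{(\frac{N-ps}{p})(\frac{p'}{p})}$, but that estimate was established only for $x\in B_\rho^c$, i.e.\ away from the origin; near $x=0$ one has $\eta_\epsilon(0)=U_\epsilon(0)=\epsilon^{-(N-sp)/p}\to\infty$, so the pointwise bound cannot be integrated naively over all of $\Omega$. Your change of variables $y=x/\epsilon$ exposes the correct mechanism: $\int_{B_{2r}}U_\epsilon^\beta=\epsilon^{N-\beta(N-sp)/p}\int_{B_{2r/\epsilon}}(1+|y|^{p'})^{-\beta(N-sp)/p}\,dy$, and the claimed exponent $\epsilon^{(\frac{N-sp}{p})(\frac{p'}{p})\beta}$ emerges precisely when the inner integral diverges like $(r/\epsilon)^{N-\beta(N-sp)p'/p}$, i.e.\ in the sub-threshold range $\beta<N(p-1)/(N-sp)$; above that threshold the true order is $\epsilon^{N-\beta(N-sp)/p}$, which is \emph{larger} as $\epsilon\to 0$, so the stated upper bound would fail. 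You are right to flag this and to propose restricting the proposition to the range actually used, with a logarithmic correction at the borderline. For part (b) your argument (restrict the rescaled integral to the fixed ball $B_1$) is a clean, slightly simpler version of what the paper does by computing the radial integral explicitly; both give the desired constant lower bound. One minor note: in the paper's computation of (b), the displayed power $y^{N-1-(N-ps)(q+1)}$ under the integral should read $y^{N-1-(N-ps)(q+1)/(p-1)}$ (one picks up $p'/p=1/(p-1)$ when replacing $(1+y^{p'})^{-\theta}$ by $y^{-p'\theta}$); the final exponent of $\epsilon$ is unaffected, but your own bookkeeping avoids this slip.
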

	
	\begin{proof} 
		\begin{itemize}
			\item[$(a)$] From \eqref{eta estimate}, we have
			\begin{align}\label{estimate for beta}
			\int_\Omega |\eta_\epsilon(x)|^{\beta}dx\leq C\epsilon^{(\frac{N-sp}{p})(\frac{p'}{p})\beta}
			\end{align} 
			\item[$(b)$] We have
			\begin{align}\label{estimate for q plus 1}
			\int_\Omega |\eta_\epsilon(x)|^{q+1}dx &= C\int_{|x|<r}U_\epsilon^{q+1}(x)dx\nonumber\\
			&=C\int_{|x|<r}\cfrac{\epsilon^{(\frac{N-sp}{p})(\frac{p'}{p})(q+1)}}{(\epsilon^{p'}+|x|^{p'})^{\frac{N-sp}{p}(q+1)}}dx\nonumber\\
			&= C\epsilon^{(\frac{N-sp}{p})(\frac{p'}{p})(q+1)}\int_{|x|<r}\cfrac{dx}{(\epsilon^{p'} +|x|^{p'})^{\frac{N-sp}{p}(q+1)}}\nonumber\\
			&= C\epsilon^{(\frac{N-sp}{p})(\frac{p'}{p})(q+1)-(\frac{N-sp}{p})(q+1)p'}\int_{0}^{r} \cfrac{t^{N-1}}{\left(1+(\frac{t}{\epsilon})^{p'}\right)^{\frac{N-sp}{p}(q+1)}}dt\nonumber\\
			&=C\epsilon^{N-(\frac{N-sp}{p})(\frac{p'}{p})(q+1)(p-1)}\int_{0}^{\frac{r}{\epsilon}}\cfrac{y^{N-1}}{\left(1+y^{p'}\right)^{\frac{N-sp}{p}(q+1)}}dy\nonumber\\
			&\geq C\epsilon^{N-(\frac{N-sp}{p})(\frac{p'}{p})(q+1)(p-1)}\int_{1}^{\frac{r}{\epsilon}} y^{N-1-(N-ps)(q+1)}dy\nonumber\\
			&=\frac{C\epsilon^{N-(\frac{N-sp}{p})(q+1)}}{L}\left[1-\left(\frac{\epsilon}{r}\right)^L\right]\nonumber\\
			&\geq C'\epsilon^{N-(\frac{N-sp}{p})(q+1)},~\text{for some}~C'>0.
			\end{align} 
		\end{itemize}
		\noindent where, $L=-(N-(N-ps)(q+1))>0.$
		\noindent Hence the proof is complete.
	\end{proof}

	\noindent We now prove the following Lemma, when Case 2 holds.
	\begin{lemma}\label{case2}
		Suppose Case 2 holds, then for $1<p<\infty, ~p-1<q\leq p_s^{*}-1, ~0<\gamma<1$ and $\lambda\in(0,\Lambda)$, there exists a weak solution $v_\lambda$ of the problem \eqref{main} such that $v_\lambda\neq u_\lambda$. 
	\end{lemma}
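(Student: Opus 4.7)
The plan is to exploit the Mountain Pass geometry guaranteed by Case~2 together with Theorem \ref{ghosub}: since $\delta_0 > \bar I_\lambda(u_\lambda)$ and $\bar I_\lambda(e) < \bar I_\lambda(u_\lambda)$, applying the theorem (for instance with $F$ a singleton disjoint from $u_\lambda$) yields a sequence $\{v_n\} \subset X_0$ with $\bar I_\lambda(v_n) \to \delta_0$ and $\bar I_\lambda'(v_n) \to 0$ in $X_0^\ast$. Lemma \ref{4l3} then provides a subsequence converging weakly to some $v_\lambda \in X_0$ that is a weak solution of \eqref{main}. The whole task is to show $\bar I_\lambda(v_\lambda) = \delta_0$, so that $\bar I_\lambda(v_\lambda) > \bar I_\lambda(u_\lambda)$ and hence $v_\lambda \neq u_\lambda$; equivalently, it suffices to prove strong convergence $v_n \to v_\lambda$ in $X_0$.

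In the subcritical regime $p-1 < q < p_s^\ast-1$, the compact embedding $X_0 \hookrightarrow L^{q+1}(\Omega)$ combined with the Brezis--Lieb identity, exactly as in the proof of the Zero Altitude case, gives $\|v_n - v_\lambda\|^p = \|v_n - v_\lambda\|_{q+1}^{q+1} + o(1)$, and the right-hand side vanishes by compactness. This directly yields strong convergence and closes the subcritical case.

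The substantive work lies in the critical case $q = p_s^\ast - 1$, where concentration at a single point is the only possible obstruction to compactness. The strategy is to bound the Mountain Pass level strictly below the critical threshold
\[
\delta_\ast \;:=\; \bar I_\lambda(u_\lambda) + \frac{s}{N}\, S^{N/(ps)},
\]
beneath which a concentration--compactness argument adapted to the fractional $p$-Laplacian (via profile decomposition) rules out bubbling and forces strong convergence. To establish $\delta_0 < \delta_\ast$, I will use the truncated Talenti-type family $\eta_\epsilon$ of \eqref{eta} to build a competitor path. Fix $T_\epsilon>0$ large enough that $\bar I_\lambda(u_\lambda + T_\epsilon \eta_\epsilon) < \bar I_\lambda(u_\lambda)$ (which is possible since $q+1 > p$ makes $\bar I_\lambda(u_\lambda + \tau\eta_\epsilon) \to -\infty$ as $\tau \to \infty$), and consider the path $\gamma_\epsilon(t) = u_\lambda + 2 t\,T_\epsilon\,\eta_\epsilon$ for $t \in [0,1/2]$. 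In the expansion of $\bar I_\lambda(u_\lambda + \tau\eta_\epsilon)$ the dominant positive contribution comes from the Gagliardo seminorm, controlled by the previous proposition; the dominant negative contribution is $\tau^{q+1}\int_\Omega \eta_\epsilon^{q+1}/(q+1)$, bounded below via the lower estimate on $\int_\Omega \eta_\epsilon^{q+1}$ which stays $\geq c > 0$ for $q = p_s^\ast-1$; the singular term contributes only $o(\epsilon^{(N-ps)p'/p})$ by the $L^{1-\gamma}$ bound on $\eta_\epsilon$; and the mixed $u_\lambda$--$\eta_\epsilon$ cross terms are of the same lower order by H\"older. Maximizing in $\tau$ and then choosing $\epsilon$ small enough yields the required strict inequality.

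The main obstacle is precisely this level estimate: when $p \neq 2$ the nonlinearity of $(-\Delta_p)^s$ makes the expansion of $\int_Q |(u_\lambda+\tau\eta_\epsilon)(x)-(u_\lambda+\tau\eta_\epsilon)(y)|^p |x-y|^{-N-ps}\,dxdy$ genuinely delicate; one cannot simply split the seminorm and must invoke convexity-type inequalities of the form \eqref{identity} together with the concentration of $\eta_\epsilon$ near zero to extract the leading-order cross-term estimates. Once $\delta_0 < \delta_\ast$ is secured, concentration--compactness rules out energy loss along $\{v_n\}$, giving $v_n \to v_\lambda$ strongly in $X_0$ and therefore $\bar I_\lambda(v_\lambda) = \delta_0 > \bar I_\lambda(u_\lambda)$, so $v_\lambda \neq u_\lambda$, which is the desired conclusion.
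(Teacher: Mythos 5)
Your proposal follows the same overall strategy as the paper: invoke the non-zero-altitude Mountain Pass geometry and Theorem~\ref{ghosub} to get a $(PS)_{F,\delta_0}$ sequence, pass to a weak limit $v_\lambda$ via Lemma~\ref{4l3}, and then prove strong convergence by showing $\delta_0$ lies strictly below the compactness threshold $\bar I_\lambda(u_\lambda)+\tfrac{s}{N}S^{N/(ps)}$ from Mosconi et al., using the truncated extremal family $\eta_\epsilon$. Your observation that the subcritical range $q<p_s^*-1$ can be dispatched directly by the compact embedding $X_0\hookrightarrow L^{q+1}(\Omega)$ together with the Brezis--Lieb identity (as in the zero-altitude case) is correct and is a clean simplification the paper does not bother to isolate.

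However, there is a genuine gap in the critical case: the level estimate $\sup_{t}\bar I_\lambda(u_\lambda+t\eta_\epsilon)<\bar I_\lambda(u_\lambda)+\tfrac{s}{N}S^{N/(ps)}$ is the entire content of the argument, and your proposal only names the obstacle without executing it. You correctly observe that for $p\neq 2$ the Gagliardo seminorm $\int_Q |u_\lambda+\tau\eta_\epsilon|^p_{W^{s,p}}$ cannot simply be split, but the tool you point to, the identity~\eqref{identity}, is the one used for the comparison principle, not for this energy expansion. What the paper actually does is quite different: it invokes a one-sided pointwise expansion of $\bar I_\lambda(u_\lambda+t\eta_\epsilon)$ around $u_\lambda$ from Azorero and Alonso (their page~946) to replace the problematic $p$-seminorm of the sum by the $p$-seminorm of $\eta_\epsilon$ alone plus controllable cross terms, then deploys the explicit scalar inequalities~\eqref{estimate c1} (to absorb the singular term, giving a $C_1 t^\beta\int_\Omega|\eta_\epsilon|^\beta$ contribution) and~\eqref{estimate c2} (to retain the favorable $-\tfrac{t^{q+1}}{q+1}\int_\Omega|\eta_\epsilon|^{q+1}$), and finally plugs in the three $\eta_\epsilon$ estimates of Propositions~4.6--4.8. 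Your plan of ``maximize in $\tau$ and choose $\epsilon$ small'' is the right final step, but without the Azorero--Alonso expansion and the inequalities \eqref{estimate c1}--\eqref{estimate c2} there is no rigorous route to the inequality~\eqref{estimate of all term case 2} on which the maximization is performed; so the crux remains unproved.
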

	\begin{proof}
		From Theorem \ref{bounded}, we have the weak solution $u_\lambda$ of the problem \eqref{main} is bounded. Therefore, there exists positive real numbers $m$ and $M$ such that $m\leq u_\lambda(x)\leq M,\,\forall\, x\in\Omega.$ We have, by Mosconi et al. \cite{mosconi2016brezis}, that the Palais Smale(PS) condition is satisfied, if
		$$\delta_0<I_\lambda(u_\lambda)+\frac{s}{N}S^\frac{N}{ps}.$$
		{\bfseries Claim.} $ \underset{0\leq t\leq\frac{1}{2}}{\sup} I_\lambda(u_\lambda+t\eta_\epsilon)< I_\lambda(u_\lambda)+\frac{s}{N}S^\frac{N}{ps}$.\\
		By the Definitions of $I_\lambda$ and $\bar{I}_\lambda$, we have $\bar{I}_\lambda(u_\lambda+t\eta_\epsilon)=I_\lambda(u_\lambda+t\eta_\epsilon)$ and $\bar{I}_\lambda(u_\lambda)=I_\lambda(u_\lambda).$
		Using the estimates given in the {\it page 946}, of  Azorero and Alonso \cite{azorero1994some}, one can conclude that 
		\begin{equation*}
		\bar{I}_\lambda(u_\lambda+t\eta_\epsilon)\leq\bar{I}_\lambda(u_\lambda)+pt\left[\int_Q \cfrac{|u_\lambda(x)-u_\lambda(y)|^{p-2}(u_\lambda(x)-u_\lambda(y))(\eta_\epsilon(x)-\eta_\epsilon(y))}{|x-y|^{N+ps}}dxdy\right]+o(\epsilon^{\alpha})
		\end{equation*} for every $\alpha>\frac{N-ps}{p}.$
		Therefore, we have
		\begin{align}\label{estimate for case 2}
		&~\bar{I}_\lambda(u_\lambda+t\eta_\epsilon)-\bar{I}_\lambda(u_\lambda)\nonumber\\
		&=I_\lambda(u_\lambda+t\eta_\epsilon)-I_\lambda(u_\lambda)-pt\left[\int_Q\cfrac{|u_\lambda(x)-u_\lambda(y)|^{p-2}(u_\lambda(x)-u_\lambda(y))(\eta_\epsilon(x)-\eta_\epsilon(y))}{|x-y|^{N+ps}}dxdy\right.\nonumber\\ &\hspace{7cm}\left.-\lambda\int_\Omega u_\lambda^{-\gamma}\eta_\epsilon(x)dx-\int_\Omega u_\lambda^{q}\eta_\epsilon(x)dx\right]\nonumber\\
		&\leq\frac{t^p}{p}\int_Q\cfrac{|\eta_\epsilon(x)-\eta_\epsilon(y)|^{p}}{|x-y|^{N+ps}}dxdy\\
		&\hspace{1cm}+\lambda\left[t\int_\Omega u_\lambda^{-\gamma}\eta_\epsilon(x)dx+\frac{1}{1-\gamma}\int_\Omega |u_\lambda|^{1-\gamma}dx-\frac{1}{1-\gamma}\int_\Omega |u_\lambda+t\eta_\epsilon|^{1-\gamma}dx \right]\nonumber\\
		&\hspace{1cm}-\left[t\int_\Omega u_\lambda^{q}\eta_\epsilon(x)dx+\frac{1}{q+1}\int_\Omega |u_\lambda|^{q+1}dx-\frac{1}{q+1}\int_\Omega |u_\lambda+t\eta_\epsilon|^{q+1}dx \right]+o(\epsilon^{\frac{N-ps}{p}})\nonumber.
		\end{align}
		The following two inequalities holds true \cite{saoudi2017critical}. For every $a,b\geq0$ with $a\geq m$, we have
		\begin{equation}\label{estimate c1}
		\lambda\left(a^{-\gamma} b+\frac{a^{1-\gamma}}{1-\gamma}-\frac{(a+b)^{1-\gamma}}{1-\gamma}\right)\leq C_1b^\beta, ~\text{for some constant}~C_1>0.
		\end{equation} and
		\begin{equation}\label{estimate c2}
		\left(\frac{(a+b)^{q+1}}{q+1}-\frac{a^{q+1}}{q+1}-a^{q} b\right)\geq \frac{b^{q+1}}{q+1}, ~\text{for some constants}~ a,b\geq0.
		\end{equation}
		By using the above two inequalities \eqref{estimate c1} and \eqref{estimate c2}, the inequality \eqref{estimate for case 2} becomes
		\begin{align}\label{estimate of all term case 2}
		\bar{I}_\lambda(u_\lambda+t\eta_\epsilon)-\bar{I}_\lambda(u_\lambda)&\leq\frac{t^p}{p}\int_Q\cfrac{|\eta_\epsilon(x)-\eta_\epsilon(y)|^{p}}{|x-y|^{N+ps}}dxdy\nonumber\\
		&-\frac{t^{q+1}}{q+1}\int_\Omega|\eta_\epsilon|^{q+1}dx +C_1t^\beta\int_\Omega|\eta_\epsilon|^{\beta}dx
		\end{align}
		Hence, from \eqref{gagliardo final}, \eqref{estimate for beta} and \eqref{estimate for q plus 1}, we get
		\begin{align}
		\begin{split}
		I_\lambda(u_\lambda+t\eta_\epsilon)-I_\lambda(u_\lambda)&\leq\frac{(2t)^p}{p}\left(S^{\frac{N}{ps}}+o(\epsilon^{{(N-ps)}(\frac{p'}{p})})\right)\\&-C\epsilon^{N-(\frac{N-sp}{p})(q+1)}+C\epsilon^{(\frac{N-sp}{p})(\frac{p'}{p})\beta}
		\end{split}   
		\end{align}
		Therefore, we can conclude that,
		\begin{align*}
		\underset{0\leq t\leq\frac{1}{2}}{\sup}&\{ I_\lambda(u_\lambda+t\eta_\epsilon)-I_\lambda(u_\lambda)\}<\frac{s}{N}S^\frac{N}{ps}\\
		\Rightarrow\underset{0\leq t\leq\frac{1}{2}}{\sup}& I_\lambda(u_\lambda+t\eta_\epsilon)< I_\lambda(u_\lambda)+\frac{s}{N}S^\frac{N}{ps}
		\end{align*}
		Hence, by the result in \cite{mosconi2016brezis}, $\{v_n\}$ is a (PS) sequence.Thus the sequence $\{v_n\}$ has a strongly convergent subsequence, from which we conclude that $\delta_0=I_\lambda(v_\lambda)>I_\lambda(u_\lambda).$ Therefore $v_\lambda\neq u_\lambda$. This completes the proof.
	\end{proof}

	\section{Regularity of the weak solutions}\label{regularity}
	Firstly, let us recall the following elementary inequality needed for the proof of the
$L^\infty$ estimate:
	
	\begin{lemma}\label{ineq1}
		(Generalization of the Lemma 3.1 in \cite{iannizzotto2015hs}) For all $a$, $b\in\mathbb{R}$, $r\geq p$, $p\geq 2$, $k>0$ we have
		\begin{align*}
		\begin{split}
		\frac{p^p(r+1-p)}{r^p}(a|a|_k^{\frac{r}{p}-(p-1)}-b|b|_k^{\frac{r}{p}-(p-1)})^p&\leq (a|a|_k^{r-p(p-1)}-b|b|_k^{r-p(p-1)})(a-b)^{p-1}
		\end{split}
		\end{align*}
		assuming $a \geq b$.
	\end{lemma}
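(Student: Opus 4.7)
The plan is to generalize the proof of Lemma~3.1 in \cite{iannizzotto2015hs}, which treats the case $p=2$ via Cauchy--Schwarz, to $p\geq 2$ using H\"older's inequality with conjugate exponents $p$ and $p/(p-1)$.

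First, both sides vanish at $a=b$, so assume $a>b$. The map $t\mapsto t|t|_k^{\mu}$ is odd and nondecreasing on $\mathbb{R}$ for every $\mu\geq 0$, so both factors on the right-hand side are nonnegative. The mixed-sign case $b<0<a$ reduces to the same-sign case by splitting at $t=0$ and summing the resulting pieces; it therefore suffices to consider $0\leq b<a$. Let $\alpha\geq 0$ be the exponent appearing inside $|\cdot|_k$ on the left-hand side (so that $p\alpha$ is the one on the right), and set
\[
\Phi(t)=t|t|_k^{\alpha},\qquad \Psi(t)=t|t|_k^{p\alpha}.
\]
These functions are Lipschitz on $\mathbb{R}$, and for $t>0$ one has a.e.
\[
\Phi'(t)=\begin{cases}(\alpha+1)t^{\alpha}&0<t<k,\\ k^{\alpha}&t>k,\end{cases}\qquad \Psi'(t)=\begin{cases}(p\alpha+1)t^{p\alpha}&0<t<k,\\ k^{p\alpha}&t>k.\end{cases}
\]
The fundamental theorem of calculus then gives $\Phi(a)-\Phi(b)=\int_b^a\Phi'(t)\,dt$ and $\Psi(a)-\Psi(b)=\int_b^a\Psi'(t)\,dt$.

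The main quantitative step is the pointwise bound
\[
\Phi'(t)^p\leq \frac{(\alpha+1)^p}{p\alpha+1}\,\Psi'(t)\qquad \text{for a.e. }t>0.
\]
On $(0,k)$ this holds as an equality; on $(k,\infty)$ the ratio $\Phi'(t)^p/\Psi'(t)$ equals $1$, and Bernoulli's inequality $(1+\alpha)^p\geq 1+p\alpha$ (valid since $\alpha\geq 0$ and $p\geq 2$) ensures that the constant $(\alpha+1)^p/(p\alpha+1)$ still dominates. H\"older's inequality with exponents $p$ and $p/(p-1)$ then gives
\[
\bigl(\Phi(a)-\Phi(b)\bigr)^p=\Bigl(\int_b^a\Phi'(t)\,dt\Bigr)^{p}\leq (a-b)^{p-1}\!\int_b^a\!\Phi'(t)^p\,dt\leq \frac{(\alpha+1)^p}{p\alpha+1}(a-b)^{p-1}\bigl(\Psi(a)-\Psi(b)\bigr).
\]
Rearranging and re-expressing $(\alpha+1)^p/(p\alpha+1)$ in terms of the parameters $r$ and $p$ yields the stated constant $p^p(r+1-p)/r^p$.

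The main obstacle is the bookkeeping at the truncation level $t=k$, where $\Phi'$ and $\Psi'$ jump: when $b<k<a$ the interval $[b,a]$ must be split as $[b,k]\cup[k,a]$ and the H\"older estimate applied on each piece, with the same pointwise ratio bound governing both sub-intervals so the final constant is unaffected. A parallel reduction for $a\leq b\leq 0$ follows from the oddness of $\Phi$ and $\Psi$, which completes the argument.
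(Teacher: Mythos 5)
Your approach is essentially the same as the paper's: you express each side via an integral of a truncated power ($\Phi'$, $\Psi'$ in your notation; $h$, $h^p$ in theirs), apply H\"older with exponents $p$ and $p/(p-1)$, and use a pointwise ratio bound between the two derivatives. Where the paper writes a single function $h$, you write $\Phi'=\tfrac{r}{p}\,h$; there is no real difference.

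However, there is a genuine gap at the final step, and it is worth flagging because it actually exposes a typo in the statement. You set $\alpha=\tfrac{r}{p}-(p-1)$ and obtain the constant $\tfrac{p\alpha+1}{(\alpha+1)^p}$, then assert that ``re-expressing $(\alpha+1)^p/(p\alpha+1)$ in terms of $r$ and $p$ yields the stated constant $p^p(r+1-p)/r^p$.'' This is false unless $p=2$. With $\alpha=\tfrac{r}{p}-(p-1)$ one has $p\alpha+1=r-p(p-1)+1$ and $\alpha+1=\tfrac{r}{p}-p+2$, so
\[
\frac{p\alpha+1}{(\alpha+1)^p}=\frac{p^p\bigl(r-p(p-1)+1\bigr)}{\bigl(r-p^2+2p\bigr)^p},
\]
which coincides with $\tfrac{p^p(r+1-p)}{r^p}$ only when $p=2$ (try $p=3$, $r=10$: you get $\tfrac{27\cdot 5}{343}\approx 0.394$ versus the claimed $\tfrac{27\cdot 8}{1000}=0.216$). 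You should have carried out this computation rather than asserting it; doing so would have revealed the discrepancy.

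The discrepancy is not in your method but in the statement: the exponents should read $\tfrac{r}{p}-1$ and $r-p$ rather than $\tfrac{r}{p}-(p-1)$ and $r-p(p-1)$. (These agree precisely at $p=2$, which is why the constant was transcribed correctly from the $p=2$ case but the exponents were not.) This is confirmed by the application in Theorem~\ref{bounded}, where the test function is $u|u|_k^{r-p}$ and the left-hand Sobolev term is $\|u|u|_k^{r/p-1}\|_{p_s^*}$. With $\alpha=\tfrac{r}{p}-1$ your scheme gives $\alpha+1=\tfrac{r}{p}$, $p\alpha+1=r-p+1$, hence
\[
\frac{p\alpha+1}{(\alpha+1)^p}=\frac{r-p+1}{(r/p)^p}=\frac{p^p(r+1-p)}{r^p},
\]
exactly the stated constant, and the hypothesis $r\geq p$ then suffices to make $\alpha\geq 0$ so that Bernoulli applies. (With the exponent as literally printed, $\alpha=\tfrac{r}{p}-(p-1)$ can be negative for $p\geq 3$ under the stated hypothesis $r\geq p$, so your Bernoulli step would also fail; you would need $r\geq p(p-1)$.) In short: same method as the paper, but your final constant identification is wrong as written, and fixing it requires correcting the exponents in the lemma statement to $\tfrac{r}{p}-1$ and $r-p$.
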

	\begin{proof}
		Define 
		\[h(t)=\begin{cases}
		\text{sgn}(t)|t|^{\frac{r}{p}-1}, & |t|<k \\
		\frac{p}{r}\text{sgn}(t)k^{\frac{r}{p}-1}, & |t|\geq k.              
		\end{cases}\]
		Observe that 
		\begin{align*}
		\begin{split}
		\int_{b}^{a}h(t)dt&=\frac{p}{r}(a|a|_k^{\frac{r}{p}-(p-1)}-b|b|_k^{\frac{r}{p}-(p-1)}).
		\end{split}
		\end{align*}
		Similarly,
		\begin{align*}
		\begin{split}
		\int_{b}^{a}h(t)^pdt&\leq\frac{1}{r+1-p}(a|a|_k^{r-p(p-1)}-b|b|_k^{r-p(p-1)}).
		\end{split}
		\end{align*}
		On using the Cauchy-Schwartz inequality we obtain
		\begin{align*}
		\begin{split}
		\left(\int_{b}^{a}h(t)dt\right)^p&\leq(a-b)^{p-1}\int_{b}^{a}h(t)^pdt.
		\end{split}
		\end{align*}
		Thus 
		\begin{align*}
		\begin{split}
		\frac{p^p}{r^p}(a|a|_k^{\frac{r}{p}-(p-1)}-b|b|_k^{\frac{r}{p}-(p-1)})^p&=\left(\int_{b}^{a}h(t)dt\right)^p\\
		&\leq(a-b)^{p-1}\int_{b}^{a}h(t)^pdt\\
		&\leq\frac{(a-b)^{p-1}}{r+1-p}(a|a|_k^{r-p(p-1)}-b|b|_k^{r-p(p-1)}).
		\end{split}
		\end{align*}
	\end{proof}
	\begin{lemma}
		The bound of the weak solution $u_0$ to ($P$) by the first eigen function.
	\end{lemma}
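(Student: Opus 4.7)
The statement to prove is that there exist constants $\eta, L > 0$ such that $\eta\phi_1 \leq u_0 \leq L\phi_1$ in $\Omega$, where $\phi_1 > 0$ denotes the first eigenfunction of $(-\Delta_p)^s$ with eigenvalue $\lambda_1$ (this is the two-sided bound already alluded to in the remark following Theorem \ref{versus}). The plan is to establish the two inequalities separately by a comparison argument, relying on Lemma \ref{weak comparison} together with the global $L^\infty$ and boundary estimates of Theorem \ref{linftyestimate}.

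\textbf{Lower bound.} Set $\underline{u} = \eta\phi_1$ for $\eta > 0$ to be chosen small. Then
\begin{equation*}
(-\Delta_p)^s \underline{u} \;=\; \lambda_1 \eta^{p-1}\phi_1^{p-1}.
\end{equation*}
Since $\phi_1 \in L^\infty(\Omega)$ and $-\gamma < 0 < p-1$, the pointwise inequality
\begin{equation*}
\lambda_1 \eta^{p-1}\phi_1^{p-1} \;\leq\; \lambda(\eta\phi_1)^{-\gamma} + (\eta\phi_1)^{q}
\end{equation*}
holds for every $\eta$ sufficiently small, because the singular term on the right-hand side blows up like $\eta^{-\gamma}$ while the left-hand side decays like $\eta^{p-1}$. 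Hence $\underline{u}$ is a weak subsolution of (P), and applying the weak comparison principle (Lemma \ref{weak comparison}) to the pair $(\underline{u}, u_0)$ yields $u_0 \geq \eta\phi_1$ a.e.\ in $\Omega$.

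\textbf{Upper bound.} Here I would combine the information already available on $u_0$: Theorem \ref{linftyestimate} furnishes both $u_0 \in L^\infty(\Omega)$ and the boundary control $u_0 \leq C_2\,d(x,\partial\Omega)$, which via the nonlocal Hopf-type estimates of \cite{iannizzotto2015hs} can be sharpened to $u_0 \leq C\,d(x,\partial\Omega)^s$. Combining this with the well-known lower boundary behaviour $\phi_1 \geq c\,d(x,\partial\Omega)^s$ in a neighbourhood of $\partial\Omega$, together with the interior positivity $\phi_1 \geq c' > 0$ on every compact set $K \Subset \Omega$ (a consequence of Corollary \ref{strong maximum principle}), gives the pointwise bound $u_0/\phi_1 \leq L$ for some $L > 0$, which is the desired estimate.

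\textbf{Main obstacle.} The lower bound is essentially routine once the weak comparison principle is in place. The real difficulty lies in the upper bound, since the right-hand side of (P) contains the singular term $\lambda/u_0^\gamma$ which is unbounded near $\partial\Omega$; a clean proof therefore relies on sharp nonlocal Hopf-type boundary estimates matching the decays of $u_0$ and of $\phi_1$. An alternative route, should one wish to avoid quoting the sharpest boundary estimates, is to compare $L\phi_1$ directly with $u_0$ as competing super/subsolutions in a suitably truncated problem and then invoke Lemma \ref{strong} to upgrade the resulting integral inequality to a pointwise one; this still requires $L$ to be chosen large enough to dominate both the $q$-growth and the singular contributions.
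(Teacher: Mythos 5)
Your approach to the lower bound is genuinely different from the paper's. The paper argues by contradiction through the variational structure: assuming $(\eta\phi_1-u_0)^+$ has positive measure support for every $\eta$, it sets $v_\eta=(\eta\phi_1-u_0)^+$, studies $\xi(t)=I(u_0+tv_\eta)$, computes $\xi'(t)$, $\xi'(1)$, uses the nonincreasing behaviour of $t\mapsto t^{-\gamma}+f(x,t)$ near zero together with the monotonicity of $(-\Delta_p)^s$, and derives a contradiction from a Taylor expansion. You instead build a comparison argument directly. Your version is cleaner and more elementary, but as written it contains a gap: Lemma \ref{weak comparison} compares the operator $\mathcal{L}u=(-\Delta_p)^su-\lambda u^{-\gamma}$, not the full equation. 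Saying ``$\underline u$ is a subsolution of (P), hence apply Lemma \ref{weak comparison} to $(\underline u,u_0)$'' asks you to verify $\mathcal{L}(u_0)\geq\mathcal{L}(\underline u)$, which from the subsolution inequality only gives $\mathcal{L}(\underline u)\leq\underline u^{\,q}$, and one would need $u_0^q\geq\underline u^{\,q}$, exactly what you are trying to prove. The fix is immediate: for $\eta$ small you already have the stronger pointwise inequality $\lambda_1\eta^{p-1}\phi_1^{p-1}\leq\lambda(\eta\phi_1)^{-\gamma}$ alone (the $(\eta\phi_1)^q$ term is not needed), so $\mathcal{L}(\eta\phi_1)\leq 0$, while $\mathcal{L}(u_0)=u_0^q\geq 0$. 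Then Lemma \ref{weak comparison} applies directly. Drop the $(\eta\phi_1)^q$ term from your displayed inequality and state the comparison in this form, and the lower bound is correct.

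For the upper bound, the paper's own argument is actually thinner than yours: it simply cites the $L^\infty$ bound from Lemma \ref{bounded} and concludes $0\leq u_0\leq B\phi_1$ ``for $B$ sufficiently large.'' Since $\phi_1$ vanishes on $\partial\Omega$, boundedness alone does not yield this pointwise bound; the boundary decay $u_0\lesssim d(x,\partial\Omega)^s$ and the opposite decay $\phi_1\gtrsim d(x,\partial\Omega)^s$ near $\partial\Omega$ are implicitly being used. Your write-up makes exactly this dependence explicit, which is an improvement. Do note that Theorem \ref{linftyestimate} as stated in the paper gives decay with exponent $1$ rather than $s$ on $d(x,\partial\Omega)$ (almost certainly a typo), and its proof is merely deferred to Section 5; so your route ultimately rests on the weighted H\"older estimates of Lemmas \ref{calpha1}--\ref{calpha2} and \cite{iannizzotto2015hs}, which is the correct foundation.
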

	\begin{proof}
		The proof is again by contradiction., i.e. $\forall\eta>0$ let $|\Omega_{\eta}|=|supp\{(\eta\phi_1-u_0)^{+}\}|>0$. Define $v_{\eta}=(\eta\phi_1-u_0)^{+}$. For $0<t<1$ define $\xi(t)=I(v_{\epsilon}+v_{\eta})$. Thus 
		\begin{align}
		\begin{split}
		\xi'(t)&=\langle I'(u_0+tv_{\eta}),v_{\eta}\rangle\\
		&=\langle(-\Delta_p)^s(u_0+tv_{\eta})-(u_0+tv_{\eta})^{-\alpha}-f(x,u_0+tv_{\eta}),v_{\eta}\rangle.
		\end{split}
		\end{align}
		Similarly,
		\begin{align}
		\begin{split}
		\xi'(1)&=\langle I'(u_0+v_{\eta}),v_{\eta}\rangle\\
		&=\langle I'(\eta\phi_1),v_{\eta}\rangle\\
		&=\langle(-\Delta_p)^s(\eta\phi_1)-(\eta\phi_1)^{-\alpha}-f(x,\eta\phi_1),v_{\eta}\rangle<0
		\end{split}
		\end{align}
		for sufficiently small $\eta>0$. Moreover,
		\begin{align}
		\begin{split}
		-\xi'(1)+\xi'(t)&=\langle(-\Delta_p)^s(u_0+tv_{\eta})-(-\Delta_p)^s(u_0+v_{\eta})\\
		&+((u_0+v_{\eta})^{-\alpha}-(u_0+tv_{\eta})^{-\alpha})+(f(x,u_0+v_{\eta})-f(x,u_0+tv_{\eta})),v_{\eta}\rangle.
		\end{split}
		\end{align}
		Since $s^{-\alpha}+f(x,s)$ is a uniformly nonincreasing function with respect to $x\in\Omega$ for sufficiently small $s>0$. Also from the monotonicity of $(-\Delta_p)^s$ we have, for sufficiently small $\eta>0$, $0\leq \xi'(1)-\xi'(t)$. From the Taylor series expansion and the fact that $K(v_{\epsilon})<\epsilon$ we have $\exists 0<\theta<1$ such that 
		\begin{align}
		\begin{split}
		0&\leq I(u_0+v_{\eta})-I(u_0)\\
		&=\langle I'(u_0+\theta v_{\eta}),v_{\eta}\rangle\\
		&=\xi'(\theta).
		\end{split}
		\end{align}  
		Thus for $t=\theta$ we have $\xi'(\theta)\geq 0$ which is a contradiction to $\xi'(\theta)\leq\xi'(1)<0$ as obtained above. Thus $u_0\geq \eta\phi_1$ for some $\eta>0$.\\
		It has already been proved in lemma \ref{bounded} that $u_0$ is $L^{\infty}$ bounded. Therefore, there exists $B>0$, sufficiently large, such that $0\leq u_0\leq B \phi_1$.
	\end{proof}
	\begin{theorem}\label{bounded}
		Let $f:\Omega\times\mathbb{R}\rightarrow\mathbb{R}$ be a nonlinear, Carath\'{e}odory function which  satisfy the growth condition $|f(x,t)|\leq a(1+|t|^{q-1})$ a.e. in $\Omega$ and for all $t\in\mathbb{R}$ ($a>0$, $1\leq q\leq p_s^*$), then for any weak solution $u\in X$ we have $u\in L^{\infty}(\Omega)$.
	\end{theorem}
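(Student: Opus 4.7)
The plan is a Moser iteration argument adapted to the fractional $p$-Laplacian, using the truncated power test functions tailored to Lemma \ref{ineq1}. First I would fix a weak solution $u\in X_0$ of $(-\Delta_p)^s u = f(x,u)$ and, for parameters $r\geq p$ and $k>0$, test the equation against $\phi = u|u|_k^{r-p(p-1)}\in X_0$, where $|t|_k:=\min\{|t|,k\}$. This test function is legitimate because it is bounded and, together with $u\in X_0$, lies in the right energy space. Applying Lemma \ref{ineq1} pointwise to $a=u(x)$, $b=u(y)$ inside the double integral yields
\begin{equation*}
\frac{p^p(r+1-p)}{r^p}\int_{Q}\frac{\bigl|u(x)|u(x)|_k^{r/p-(p-1)}-u(y)|u(y)|_k^{r/p-(p-1)}\bigr|^p}{|x-y|^{N+ps}}\,dx\,dy \;\leq\; \int_\Omega f(x,u)\,u\,|u|_k^{r-p(p-1)}\,dx.
\end{equation*}
The left-hand side controls the Gagliardo seminorm of $w_k:=u|u|_k^{r/p-(p-1)}$, which in turn, via the Sobolev embedding of Lemma \ref{embedding}, bounds $\|w_k\|_{L^{p_s^*}(\Omega)}^p$.

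Next I would estimate the right-hand side using the growth condition $|f(x,t)|\le a(1+|t|^{q-1})$ with $q\le p_s^*$. Split into two pieces: the harmless linear part contributes something controlled by $\|u\|_{L^r(\Omega)}^{r-p(p-1)+1}$, while the $|u|^{q-1}$ part gives a term of the form $\int_\Omega |u|^{q}|u|_k^{r-p(p-1)}\,dx$. In the subcritical case $q<p_s^*$ one estimates this directly by H\"older's inequality to obtain a recursive inequality of the form
\begin{equation*}
\|u\|_{L^{\sigma r}(\Omega)}^{r}\;\le\; C\,(1+r)^{c}\bigl(1+\|u\|_{L^{r}(\Omega)}^{r}\bigr),\qquad \sigma:=\frac{p_s^*}{p}>1,
\end{equation*}
after letting $k\to\infty$ by monotone convergence. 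Iterating along the sequence $r_n=p\sigma^n$ and tracking the constants in the standard Moser fashion yields $\|u\|_{L^{\infty}(\Omega)}\le C(1+\|u\|_{L^{p_s^*}(\Omega)})<\infty$.

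The main obstacle is the critical case $q=p_s^*$, where a naive H\"older estimate does not decouple the right-hand side from the very quantity being iterated. I would handle this by the familiar concentration-truncation trick: split
\begin{equation*}
\int_\Omega |u|^{p_s^*-1}u|u|_k^{r-p(p-1)}\,dx = \int_{\{|u|\le M\}} + \int_{\{|u|> M\}},
\end{equation*}
bound the first integral by $M^{p_s^*-p}\int_\Omega|u|^{r-p(p-1)+p}\,dx$, and on the second set use H\"older with exponents $p_s^*/(p_s^*-p)$ and $p_s^*/p$ to obtain the factor $\bigl(\int_{\{|u|>M\}}|u|^{p_s^*}\bigr)^{(p_s^*-p)/p_s^*}\|w_k\|_{L^{p_s^*}}^p$. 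Since $u\in L^{p_s^*}(\Omega)$, choosing $M$ so large that this factor is smaller than half the Sobolev constant appearing on the left allows one to absorb the critical term into the left-hand side. After this absorption the argument proceeds exactly as in the subcritical case: let $k\to\infty$, obtain the Moser recursion, and iterate to conclude $u\in L^\infty(\Omega)$.
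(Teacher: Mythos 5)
Your proposal follows the paper's proof essentially step for step: both test the equation against the truncated power $u|u|_k^{\,\cdot}$, invoke the elementary inequality of Lemma \ref{ineq1} together with the Sobolev embedding to produce a reverse-H\"older inequality, run a Moser iteration in the subcritical range, and in the critical case $q=p_s^*$ split the right-hand side at a level set $\{|u|>M\}$ so that the small tail of $\|u\|_{L^{p_s^*}}$ can be absorbed into the left-hand side.

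One small imprecision in your critical-case conclusion is worth flagging: the Sobolev constant you absorb against carries the factor $p^p(r+1-p)/r^p$, which degrades as $r\to\infty$, so you cannot literally ``iterate'' the absorption with a single choice of $M$. What actually happens, and what the paper does by fixing $r=q+p-1$, is that \emph{one} absorption step lifts $u$ from $L^{p_s^*}$ to $L^{\sigma}$ with $\sigma>p_s^*$, after which the growth term is effectively subcritical and the subcritical Moser recursion takes over for all subsequent $r$. With that reading your sketch coincides with the paper's argument.
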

	\begin{proof}
		{\it Case 1}:~({\bf Subcritical case $1\leq q<p_s^*$})~
		Let $u$ be a weak solution to the given probem and $\gamma=\left(\frac{p_s^*}{p}\right)^{\frac{1}{p}}$. For every $r\geq p(p-1)$, $p \geq 2$, $k>0$, the mapping $t\mapsto t|t|_k^{r-p}$ is Lipshitz in $\mathbb{R}$. Thus $u|u|_k^{r-p}\in X$. Here, in general for any $t$ in $\mathbb{R}$ and $k>0$, we have defined $t_k=\text{sgn}(t)\min\{|t|,k\}$.  We apply the Sobolev inequality, previous lemma, test with the test function $u|u|_k^{r-p}$ and on using the growth condition of $f$ which is given in the theorem to get
		\begin{align}\label{crit_subcrit}
		\begin{split}
		\|u|u|_k^{\frac{r}{p}-1}\|_{p_s^*}^p&\leq C \|u|u|_k^{\frac{r}{p}-1}\|_{X}^p\\
		&\leq C\frac{r^p}{r+1-p}\langle u,u|u|_k^{r-p} \rangle_{X}\\
		&\leq Cr^p\int_{\Omega}|f(x,u)||u||u|_k^{r-p}dx\\
		&\leq Cr^{p}\int_{\Omega}(|u||u|_k^{r-p}+|u|^q|u|_k^{r-p}+|u|^{1-\alpha}|u|_k^{r-p})dx
		\end{split}
		\end{align}
		for some $C>0$ independent of $r\geq p$ and $k>0$. On applying the Fatou's lemma as $k\rightarrow\infty$ gives 
		\begin{align}\label{eq0}
		\begin{split}
		\|u\|_{\gamma^p r}&\leq Cr^{\frac{p}{r}}\left\{\int_{\Omega}(|u|^{r-(p-1)}+|u|^{r+q-p}+|u|^{r-p-\alpha+1})dx\right\}^{1/r}.
		\end{split}
		\end{align}
		Here onwards we try to develope an argument to guarantee that $u\in L^{p_1}(\Omega)$ for all $p_1\geq 1$. Towards this, we divide our attempt into two cases, viz. sub and supercritical cases. Define a recurssive sequence $(r_n)$ by choosing $\mu>\mu_0$ and setting $r_0=\mu$, $r_{n+1}=\gamma^p r_n+p-q$. \\
		{\it Case 1:}~(Subcritical case $q<p_s^*$)\\
		We fix $\mu=p_s^*+p-q>\max\{p,\mu_0\}$. Since, $r_0+q-p=p_s^*$ we have $u\in L^{r_0+q-p}(\Omega)$ (because $u$ is a weak solution and by the embedding result). Therefore, on choosing $r=r_0$ in (\ref{eq0}) we obtain a finite right side, so $u\in L^{\gamma^p r_0}(\Omega)=L^{r_1+q-p}(\Omega)$. Iterating this argument and using the fact $r\mapsto r^{1/r}$ is bounded in $[2,\infty)$ for all $n$, we have $u\in L^{\gamma^p r_n}(\Omega)$. We further have 
		\begin{align}\label{eq1}
		\begin{split}
		\|u\|_{\gamma^p r_n}&\leq H(n,\|u\|_{p_s^*}).
		\end{split}
		\end{align}
		Arguments from Iannizzotto guarantees
		\begin{align}\label{estimate1}
		\begin{split}
		\|u\|_{p_1}&\leq H(p_1,\|u\|_{p_s^*}), p_1\geq 1.
		\end{split}
		\end{align}
		We now attempt to improve the estimate in (\ref{estimate1}) by making the function $H$ free of $p_1$. Set $\gamma'=\frac{\gamma}{\gamma-1}$. Thus by (\ref{estimate1}) and H\"{o}lder's inequality we have
		\begin{align*}
		\begin{split}
		\||u|+|u|^{q}\|_{\gamma'}&\leq H(\|u\|_{p_s^*})
		\end{split}
		\end{align*} 
		Therefore for $r\geq p(p-1)$ we have 
		\begin{align*}\label{estimate2}
		\begin{split}
		\||u|^{r-(p-1)}+|u|^{r+q-p}+|u|^{r-p-\alpha+1}\|_{\gamma'}&\leq \||u|+|u|^{q}+|u|^{1-\alpha}\|_{\gamma'}\||u|^{r-p}\|_{\gamma}\\
		&\leq H(\|u\|_{p_s^*})\|u\|_{\gamma(r-p)}^{r-p}\\
		&\leq H(\|u\|_{p_s^*})\|u\|_{\gamma^{p-1}(r-p)}^{r-p}\\
		&\leq H(\|u\|_{p_s^*})|\Omega|^{\frac{1}{\gamma^{p-1} r}}\|u\|_{\gamma^{p-1} r}^{r-p}
		\end{split}
		\end{align*}
		We note that $t\mapsto |\Omega|^{p/(\gamma^{p-1} t)}$ is a bounded map in $[p,\infty)$ and hence
		\begin{align}
		\begin{split}
		\||u|^{r-(p-1)}+|u|^{r+q-p}+|u|^{r-p-\alpha+1}\|_{\gamma'}&\leq M(\|u\|_{p_s^*})\|u\|_{\gamma^{p-1} r}^{r-p}
		\end{split}
		\end{align}
		\label{estimate3}
		For a sufficiently large $n$ we define $r=\gamma^{n-1}>>p$ and further set $v=\frac{u}{H(\|u\|_{p_s^*})^{1/p}}$. Using these choices in (\ref{eq0}) and the recurssive formula we obtain we get 
		\begin{align}
		\|u\|_{\gamma^{n}+p-1}^{\gamma^{n-1}}&\leq H(\|u\|_{p_s^*})\|u\|_{\gamma^{n-p+2}}^{\gamma^{n-1}-p}.
		\end{align}
		On using the definition of $v$ and iterating we get 
		\begin{align*}
		\begin{split}
		\|v\|_{\gamma^{n+p-1}}&\leq \|v\|_{\gamma^{n+p-2}}^{1-p\gamma^{1-n}}\\
		&\leq \|v\|_{\gamma^{n+p-3}}^{(1-p\gamma^{1-n})(1-(p-1)\gamma^{2-n})}\\ 
		&.....\\
		&\leq  \|v\|_{\gamma^{p}}^{\prod_{i=1}^{n-1}[1-p\gamma^{i-n}]}    
		\end{split}
		\end{align*}
		It is easy to see that the product $\prod_{i=1}^{n-1}[1-p\gamma^{i-n}]$ is bounded in $\mathbb{R}$ and hence for all $n$ we have 
		\begin{align*}
		\begin{split}
		\|v\|_{\gamma^{n+p-1}}&\leq  \|v\|_{\gamma^{p}}^{\prod_{i=1}^{n-1}[1-p\gamma^{i-n}]}<\infty.
		\end{split}
		\end{align*}
		Reverting back to $u$ and recalling the fact that $\gamma^{n-1}\rightarrow\infty$ as $n\rightarrow\infty$, we find that there exists $M\in C(\mathbb{R}^+)$ such that $\|u\|_{p_1}\leq M(\|u\|_{p_s^*})$ for all $p_1 \geq 1$. The function $M$ here has been obtained from the function $H$ which was previously. Therefore, we have $\|u\|_{\infty}\leq \infty$.\\
		{\it Case 2}:~(Critical case $q=p_s^*$)~We use $r=q+p-1>p$ in (\ref{crit_subcrit}) and fix $\delta>0$ such that $Cr^p\delta<\frac{1}{p}$. There exists $K_0>0$ (depending on $u$) such that $(\int_{\{|u|>K_0\}}|u|^q)^{1-\frac{p}{q}}\leq \delta$. Thus by the H\"{o}lder's inequality in $(\int_{\{|u|>K_0\}}|u|^q)^{1-\frac{p}{q}}\leq \delta$ we get 
		\begin{align}
		\begin{split}
		\int_{\Omega}|u|^q|u|_k^{r-p}dx&\leq K_0^{q+r-p}|\{|u|\leq K_0\}|+\int_{\{|u|>K_0\}}|u|^q|u|_k^{r-p}dx\\
		&\leq K_0^{q+r-p}|\{|u|\leq K_0\}|\\
		&+\left(\int_{\Omega}(|u|^p|u|_k^{r-p})^{q/p}dx\right)^{\frac{p}{q}}\left(\int_{\{|u|>K_0\}}(|u|^q|dx\right)^{1-\frac{p}{q}}\\
		&\leq K_0^{q+r-p}|\Omega|+\delta\|u|u|_k^{\frac{r-p}{p}}\|_q^p.
		\end{split}
		\end{align}
		Using the choice of $Cr\delta<\frac{1}{2}$ and the Lemma \ref{ineq1} to obtain
		\begin{align}
		\begin{split}
		\frac{1}{2}\|u|u|_k^{\frac{r-p}{p}}\|_q^p&\leq C(q+p-1)^{p}(\|u\|_q^q+K_0^{q+p-1}|\Omega|+\|u\|_{q-\alpha}^{q-\alpha}).
		\end{split}
		\end{align}
		On passing the limit $k\rightarrow\infty$ we have
		\begin{align}
		\begin{split}
		\|u\|_{\frac{q(q+p-1)}{p}}&\leq N(K_0,\|u\|_q).
		\end{split}
		\end{align}
		The rest of the proof follows from the same argument and words as in the subcritical case.
	\end{proof}
	\begin{lemma}
	~$|(-\Delta_p)^su|\leq K$ in $B_r(x)$ where $u$ is a weak solution to the problem ($P$).
	\end{lemma}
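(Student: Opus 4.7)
The plan is to read off the bound directly from the pointwise equation. Since $u$ is a weak solution of $(P)$ and $B_r(x)\Subset\Omega$, we have
\begin{equation*}
(-\Delta_p)^s u(y) \;=\; \lambda\, u(y)^{-\gamma} + u(y)^{q} \quad\text{a.e. } y\in B_r(x),
\end{equation*}
so it is enough to bound the right-hand side uniformly on $B_r(x)$. By Theorem \ref{bounded} we already know $u\in L^{\infty}(\Omega)$, which at once yields $u^{q}\leq \|u\|_{\infty}^{q}$. The only non-trivial piece is therefore the singular term $\lambda u^{-\gamma}$, for which I need a strictly positive lower bound for $u$ on $B_r(x)$.

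For that lower bound I would invoke the preceding lemma, which gives $u\geq \eta\,\phi_1$ in $\Omega$ for some $\eta>0$, where $\phi_1$ is the first eigenfunction of $(-\Delta_p)^s$ on $\Omega$. Since $\phi_1\in L^{\infty}(\Omega)$ (Lindgren–Lindqvist, as already cited) is strictly positive in $\Omega$ and in fact belongs to $C^{0,s}(\overline\Omega)$ by the global regularity result of Iannizzotto–Mosconi–Squassina \cite{iannizzotto2015hs}, it attains a positive minimum on the compact set $\overline{B_r(x)}\subset\Omega$; call this minimum $c_0>0$. Therefore $u\geq \eta c_0$ on $B_r(x)$, which yields
\begin{equation*}
\lambda\, u^{-\gamma}(y)\;\leq\;\frac{\lambda}{(\eta c_0)^{\gamma}}\qquad\text{for every }y\in B_r(x).
\end{equation*}

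Combining the two estimates, the constant
\begin{equation*}
K\;:=\;\frac{\lambda}{(\eta c_0)^{\gamma}}\;+\;\|u\|_{\infty}^{q}
\end{equation*}
depends only on $\lambda,\gamma,q,\eta,c_0$ and the $L^\infty$-norm of $u$, and satisfies $|(-\Delta_p)^s u|\leq K$ in $B_r(x)$, as claimed. The main delicate point to emphasize is that the argument uses $B_r(x)\Subset\Omega$ in an essential way: near $\partial\Omega$ one only has $\phi_1\gtrsim d(\cdot,\partial\Omega)^{s}$, and then the singular term $u^{-\gamma}$ behaves at worst like $d(\cdot,\partial\Omega)^{-s\gamma}$, which is not globally bounded. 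So the statement is an interior estimate and the hypothesis $B_r(x)\Subset\Omega$ (implicitly built into the notation) is what allows the strict positivity $\phi_1\geq c_0>0$ used above.
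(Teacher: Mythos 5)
Your proof is correct but takes a genuinely different route from the paper. The paper tries to estimate the nonlocal operator directly from its integral definition: it writes $|(-\Delta_p)^s u(x)|$ as an integral of $|u(x)-u(y)|^{p-1}/|x-y|^{N+ps}$, splits the kernel as $|x-y|^{-(N+(p-1)s)}\cdot |x-y|^{-s}$, and then invokes polar coordinates and H\"older's inequality to bound the result using the finiteness of the Gagliardo seminorm of $u$. You instead read the bound off the equation itself: since $u$ solves $(P)$, $(-\Delta_p)^s u = \lambda u^{-\gamma} + u^{q}$ a.e.\ in $\Omega$, and the right-hand side is controlled on $B_r(x)\Subset\Omega$ using the global $L^\infty$ bound on $u$ (Theorem \ref{bounded}) together with the interior positivity $u\geq\eta\phi_1\geq\eta c_0>0$ from the preceding lemma. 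Your route has the advantage of being structurally cleaner and making explicit where the hypothesis $B_r(x)\Subset\Omega$ enters (through the positive lower bound for $\phi_1$ on a compact subset of $\Omega$, which degenerates like $d(\cdot,\partial\Omega)^{s}$ near the boundary). The paper's route, by contrast, is an operator-theoretic estimate that does not use the equation and would, in principle, apply to any $u\in X_0$; but as written it restricts the integral to $B_r(x)$ rather than $\mathbb{R}^N$ and does not address the principal-value singularity at $y=x$, so it is less transparent. Both proofs yield the same interior $L^\infty$ bound on $(-\Delta_p)^s u$, which is what is needed to feed into the H\"older regularity lemmas that follow.

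One small point worth making explicit in your write-up: the passage from the weak formulation to the pointwise identity $(-\Delta_p)^s u(y)=\lambda u(y)^{-\gamma}+u(y)^q$ a.e.\ is legitimate once the right-hand side is known to be in $L^1_{\mathrm{loc}}(\Omega)$, which your bounds provide; but it is this identification of the distributional fractional $p$-Laplacian with a locally integrable function that makes the statement $|(-\Delta_p)^s u|\leq K$ meaningful, so it deserves a sentence.
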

	\begin{proof}~By definition 
	\begin{align}
	\begin{split}
	|(-\Delta_p)^su(x)|&=C\int_{B_r(x)}\frac{|u(x)-u(y)|^{p-1}}{|x-y|^{N+ps}}dy\\
	&=C\int_{B_r(x)}\frac{|u(x)-u(y)|^{p-1}}{|x-y|^{N+(p-1)s}}\frac{1}{|x-y|^{ps-(p-1)s}}dy.
	\end{split}
	\end{align}
	Converting this into polar coordinates and applying the H\"{o}lder's inequality we obtain $|(-\Delta_p)^su(x)|\leq C$ in $B_r(x)$.
	The proof follows {\it verbatim} when $|(-\Delta_p)^su'|$ is considered.
	\end{proof}
	We now generalize two of the results of Iannizzoto.
	\begin{lemma}\label{calpha1}
		There exists $0<\alpha\leq s$ such that $[u/\delta^s]_{C^{\alpha}(\overline{\Omega})}\leq C$ for all weak solutions of the problem.
	\end{lemma}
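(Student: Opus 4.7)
The plan is to import the boundary and interior regularity machinery developed by Iannizzotto--Mosconi--Squassina for the fractional $p$--Laplacian (the results cited as \cite{iannizzotto2015hs} and \cite{iannizzotto2014global}), and to argue that the singular right--hand side $f_\lambda(x,u)=\lambda u^{-\gamma}+u^{q}$ behaves, for our class of weak solutions, like a bounded forcing term after one exploits the two--sided barrier $c_{1}\delta^{s}\le u\le c_{2}\delta^{s}$ already available from Lemma~\ref{subsuperlemma} together with the eigenfunction comparison proved just above.

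First I would record the two--sided bound. Starting from the previous lemma we have $u\ge\eta\phi_{1}$, and from Theorem~\ref{bounded} we have $u\in L^{\infty}(\Omega)$, while Lemma~2.7 of \cite{iannizzotto2015hs} applied to the super/subsolution inequalities for $(-\Delta_p)^{s}$ gives the Hopf--type behaviour $\phi_{1}\asymp\delta^{s}$ near $\partial\Omega$ and an analogous upper barrier. Combining these produces constants $c_{1},c_{2}>0$ with $c_{1}\delta(x)^{s}\le u(x)\le c_{2}\delta(x)^{s}$ on $\overline{\Omega}$. Consequently
\begin{equation*}
|f_\lambda(x,u(x))|\le \lambda c_{1}^{-\gamma}\delta(x)^{-s\gamma}+c_{2}^{q}\delta(x)^{sq},
\end{equation*}
and, because $0<\gamma<1$, the singular factor $\delta^{-s\gamma}$ is locally integrable to any power strictly less than $1/(s\gamma)$. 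In particular the effective source lies in $L^{m}(\Omega)$ for some $m$ larger than the threshold $N/(ps)$ required by the boundary regularity theorem.

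Next I would invoke the global H\"older regularity result of \cite{iannizzotto2014global} (Theorem~4.4), which asserts that weak solutions of $(-\Delta_p)^{s}w=g$ in $\Omega$, $w=0$ in $\mathbb{R}^{N}\setminus\Omega$ with $g\in L^{m}(\Omega)$, $m>N/(ps)$, are H\"older continuous up to the boundary: $w\in C^{0,s}(\overline{\Omega})$. Then I would upgrade this to the \emph{weighted} H\"older bound using the argument in \cite{iannizzotto2015hs}. The strategy is the standard one: rescale near a boundary point $x_{0}\in\partial\Omega$ by setting $v_{\rho}(y)=\rho^{-s}u(x_{0}+\rho y)$ for $\rho\in(0,\rho_{0}]$, use the scaling of $(-\Delta_p)^{s}$, and apply the interior and boundary H\"older estimates to the rescaled family, which are equibounded thanks to the two--sided barrier. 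This produces a $\rho$--independent oscillation decay for $u/\delta^{s}$, hence $[u/\delta^{s}]_{C^{\alpha}(\overline{\Omega})}\le C$ for some $\alpha\in(0,s]$ depending only on $N,p,s,\gamma,q,\Omega$.

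The main obstacle I expect is handling the singular term $\lambda u^{-\gamma}$ in the boundary rescaling: after the scaling, the forcing becomes $\rho^{ps}\lambda u(x_{0}+\rho y)^{-\gamma}$, which is only bounded because of the lower barrier $u\ge c_{1}\delta^{s}$, and one has to verify that the resulting bound is uniform in $\rho$ so that the compactness argument of \cite{iannizzotto2015hs} indeed applies. Once that uniformity is secured --- using precisely the comparability $u/\delta^{s}\in[c_{1},c_{2}]$ in a neighbourhood of the boundary --- the conclusion follows exactly as in the proof of the non--singular fractional $p$--Laplacian boundary H\"older estimate.
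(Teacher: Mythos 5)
Your route is genuinely different from the paper's. The paper does not first prove an unweighted global H\"older estimate and then blow up near the boundary; instead it works directly with the weighted quantity $u/\delta^s$: it estimates $\text{Tail}(u/\delta^s;x_0,R_0)$ by the quantity $Q(u/\delta^s;x_0,2R_0)$, feeds this into a seminorm inequality of the form
\begin{equation*}
[u/\delta^s]_{C^{\alpha}(B_{R_0}(x_0))}\leq C\Bigl[(KR_0^{ps})^{1/(p-1)}+Q(u/\delta^s;x_0,2R_0)\Bigr]R_0^{-\alpha},
\end{equation*}
where $K$ is the \emph{local} bound on $(-\Delta_p)^su$ supplied by the preceding lemma, and then concludes by a compactness argument on interior subdomains together with a metric--projection argument in a boundary collar. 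The crucial structural difference is that the paper never needs the right-hand side to be globally $L^m$: it only needs a local $L^\infty$ bound on $(-\Delta_p)^su$ on each fixed ball, and the boundary blow-up of $K$ is absorbed because $\alpha\le s\le sp/(p-1)$.

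This matters, because your route has a concrete gap at the integrability step. You bound the source by $\lambda c_1^{-\gamma}\delta^{-s\gamma}+c_2^q\delta^{sq}$ and claim it lies in $L^m(\Omega)$ with $m>N/(ps)$. But $\delta^{-s\gamma}\in L^m(\Omega)$ if and only if $ms\gamma<1$, so the window $N/(ps)<m<1/(s\gamma)$ is nonempty only when $\gamma<p/N$. Under the paper's standing hypotheses ($N>ps$, $0<\gamma<1$, $1<p<\infty$) this fails whenever $p<N$ and $\gamma\ge p/N$ --- a perfectly admissible regime. Moreover, the global H\"older result you invoke (Theorem~4.4 of \cite{iannizzotto2014global}) is stated for $f\in L^\infty(\Omega)$, not for $f\in L^m$ with $m>N/(ps)$, so even when the integrability window is nonempty the cited theorem does not apply as stated; you would have to re-derive an $L^m$-version, or (as the paper does) bypass the issue by arguing locally with $u/\delta^s$ from the start. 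Finally, the rescaling/compactness step you sketch in the last paragraph does not remove the singularity at the boundary of the rescaled domains: the coefficient $\rho^{ps}u(x_0+\rho y)^{-\gamma}\sim\rho^{s(p-\gamma)}\delta_\rho(y)^{-s\gamma}$ has small prefactor but is still unbounded up to the rescaled boundary, so a uniform-in-$\rho$ oscillation decay is not an immediate consequence of the non-singular barrier estimates; this is exactly the difficulty the paper's Tail-based argument is designed to circumvent.
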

	\begin{proof}
		Observe that 
		\begin{align}\label{hold1}
		\begin{split}
		\text{Tail}(u/\delta^s;x,R_0)^{p-1}&=R_0^{ps}\int_{B_{R_0}(x)^c}\frac{|u(y)|^{p-1}}{\delta(y)^{s(p-1)}|x-y|^{N+sp}}dy\\
		&\leq CR_0^{ps}\left(\int_{B_{2R_0}(x_0)\setminus B_{R_0(x_0)}}\frac{\|u/\delta^s\|_{L^{\infty}(B_{2R_0}(x_0))}^{p-1}}{|x-y|^{N+ps}}dy\right.\\
		&\left.+\int_{B_{2R_0(x_0)^c}}\frac{|u(y)|^{p-1}}{\delta(y)^{s(p-1)}|x-y|^{N+ps}}dy\right)\\
		&\leq C\left(\|u/\delta^s\|_{L^{\infty}(B_{2R_0}(x_0))}^{p-1}+R_0^{ps}\int_{B_{2R_0(x_0)^c}}\frac{|u(y)|^{p-1}}{\delta(y)^{s(p-1)}|x-y|^{N+ps}}dy\right)\\
		&=CQ(u/\delta^s;x_0,2R_0).
		\end{split}
		\end{align}
		Here $K$ is the bound of $(-\Delta_p)^su$ in $B_{2R_0}(x_0)$ (refer Appendix). Thus we obtained $Q(u/\delta^s;x_0,R_0)\leq CQ(u/\delta^s;x_0,2R_0)$ which implies the following H\"{o}lder seminorm estimate.
		\begin{align}\label{estimate1}
		\begin{split}
		[u/\delta^s]_{C^{\alpha}(B_{R_0}(x_0))}&\leq C[(KR_0^{ps})^{1/(p-1)}+Q(u/\delta^s;x_0,2R_0)]R_0^{-\alpha}.
		\end{split}
		\end{align}
		We assume that $\alpha\in(0,s]$.
		Let $\Omega'\Subset\Omega$. Then we have through the compactness of $\Omega'$ and the the estimate (\ref{estimate1}) that $\|u/\delta^s\|_{C^{\alpha}(\overline{\Omega'})}\leq C$.\\
		Let $\Pi:V\rightarrow \partial\Omega$ be a metric projection map defined as $\Pi(x)=\text{Argmin}_{y\in\partial\Omega^c}\{|x-y|\}$ where $V=\{x\in\overline{\Omega}:d(x,\partial\Omega)\leq \rho\}$. By (\ref{estimate1}) we have 
		\begin{align}\label{estimate2}
		\begin{split}
		[u/\delta^s]_{C^{\alpha}(B_{r/2}(x))}&\leq C[(Kr^{ps})^{1/(p-1)}+\|u/\delta^s\|_{L^{\infty}(B_r(x))}+\text{Tail}(u/\delta^s;x,r)]r^{-\alpha}.
		\end{split}
		\end{align}
		We now try to control the growth of the terms on the right hand side of (\ref{estimate2}). The first term is trivially controlled since $\alpha\leq s\leq \frac{sp}{p-1}$. The other terms are controlled uniformly due to the compactness of the set $V$.
	\end{proof}
	\begin{lemma}\label{calpha2}
		There exists $0<\alpha\leq s$ such that $[u']_{C^{\alpha}(\overline{\Omega})}\leq C$ for all weak solutions of the problem.
	\end{lemma}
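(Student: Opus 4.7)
The plan is to mirror the structure of the proof of Lemma \ref{calpha1}, but instead of working with the weighted quotient $u/\delta^s$ we work directly with $u'$ (the interior counterpart), adapting Iannizzotto--Mosconi--Squassina's unweighted Hölder regularity for the fractional $p$-Laplacian to our singular right--hand side. The starting observation is that any weak solution $u$ of \eqref{main} satisfies $|(-\Delta_p)^s u| \leq K$ on balls $B_r(x) \Subset \Omega$ by the preceding lemma, once we combine the global $L^\infty$ bound from Theorem \ref{bounded} with the lower estimate $u \geq c\,\delta^s$ obtained from the subsolution $\eta\phi_1$ used in Lemma \ref{subsuperlemma}. This guarantees that both pieces of the nonlinearity, $\lambda u^{-\gamma}$ and $u^q$, are in $L^\infty_{loc}(\Omega)$, with quantitative control depending only on $\lambda$, $\|u\|_\infty$ and $d(x,\partial\Omega)$.

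The first main step is to invoke the interior Hölder estimate of Iannizzotto--Mosconi--Squassina (the unweighted analogue of the estimate used to get \eqref{estimate1}): there exists $\alpha \in (0,s]$ such that
\begin{equation*}
[u']_{C^{\alpha}(B_{R_0/2}(x_0))} \leq C\Bigl[(K R_0^{ps})^{1/(p-1)} + \|u'\|_{L^{\infty}(B_{R_0}(x_0))} + \mathrm{Tail}(u'; x_0, R_0)\Bigr] R_0^{-\alpha},
\end{equation*}
valid for every $x_0 \in \Omega$ and every $R_0>0$ such that $B_{R_0}(x_0) \subset \Omega$. Here the tail term is finite because $u'$ is bounded on $\mathbb{R}^N$ (it vanishes outside $\Omega$), and the $L^\infty$--term is controlled by Theorem \ref{bounded} independently of the particular solution.

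The second step is to pass from this interior control to a uniform estimate on $\overline{\Omega}$. As in the conclusion of Lemma \ref{calpha1}, fix $\Omega' \Subset \Omega$; by compactness and the interior estimate above, $\|u'\|_{C^\alpha(\overline{\Omega'})} \leq C$. For the boundary layer $V=\{x\in\overline{\Omega}:\,d(x,\partial\Omega)\leq\rho\}$ we cover $V$ by balls of radius comparable to $\delta(x)$ centred at points whose distance to $\partial\Omega$ controls $r$, and we exploit the fact that the three terms on the right--hand side of the displayed estimate scale in a favourable way with $r$: the first is controlled because $\alpha \leq s \leq sp/(p-1)$; the sup--norm and the tail are controlled uniformly because $u$ is globally bounded and $u \equiv 0$ on $\Omega^c$, the latter making the tail integrand decay like $|x-y|^{-N-ps}$ away from $x$. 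Piecing together the estimates on $\overline{\Omega'}$ and on $V$ yields $[u']_{C^\alpha(\overline{\Omega})} \leq C$, with $C$ independent of the specific weak solution.

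The main obstacle is controlling $K$ (the $L^\infty$ bound on $(-\Delta_p)^s u$) uniformly as $x_0$ approaches $\partial\Omega$, since the singular term $\lambda u^{-\gamma}$ blows up there. Using the barrier $u \geq c\,\delta^s$ one obtains $u^{-\gamma} \leq c^{-\gamma}\delta^{-s\gamma}$; because $\gamma<1$ and $s<1$ give $s\gamma<1$, this singularity is integrable against the nonlocal kernel on balls $B_r(x_0)$ of radius comparable to $\delta(x_0)$, and a careful book--keeping (analogous to the passage from \eqref{hold1} to \eqref{estimate2}) turns the apparent blow--up into a bounded contribution after rescaling by $R_0^{-\alpha}$. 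Uniformity of the constants over the family of weak solutions follows from Theorem \ref{linftyestimate} together with the uniform lower barrier coming from Lemma \ref{subsuperlemma}, closing the argument.
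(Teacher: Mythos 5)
Your proposal follows essentially the same route as the paper: bound $(-\Delta_p)^s u'$ locally, invoke the Iannizzotto--Mosconi--Squassina interior Hölder seminorm estimate in the unweighted form
\[
[u']_{C^{\alpha}(B_{R_0/2}(x_0))}\leq C\bigl[(KR_0^{ps})^{1/(p-1)}+\|u'\|_{L^{\infty}(B_{R_0}(x_0))}+\mathrm{Tail}(u';x_0,R_0)\bigr]R_0^{-\alpha},
\]
split $\overline{\Omega}$ into a compact core $\Omega'\Subset\Omega$ and a boundary collar $V$, control the first term on the right via $\alpha\leq s\leq sp/(p-1)$, and control the remaining terms uniformly via the global boundedness of $u$ and the vanishing of $u'$ outside $\Omega$. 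This is precisely the paper's argument, which transplants the proof of Lemma~\ref{calpha1} verbatim with $u/\delta^s$ replaced by $u'$.

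Your final paragraph adds something the paper leaves implicit: you flag that the constant $K$ bounding $(-\Delta_p)^s u'$ could degenerate near $\partial\Omega$ because of the singular source $\lambda u^{-\gamma}$, and you propose to salvage this via the lower barrier $u\geq c\,\delta^s$ together with $s\gamma<1$, so that $u^{-\gamma}\lesssim\delta^{-s\gamma}$ remains integrable against the nonlocal kernel on balls of radius comparable to $\delta(x_0)$. The paper instead disposes of this by appealing to ``compactness of the set $V$'', which does not by itself control a quantity that blows up as one approaches $\partial\Omega$. Your version makes the scaling argument explicit and is the more honest closing step. That said, one point deserves a sentence in either write-up: the local $L^\infty$ bound you actually cite is for $(-\Delta_p)^s u$, whereas the Hölder estimate you apply needs the corresponding bound for $(-\Delta_p)^s u'$; the paper claims this follows ``verbatim'' from the same polar-coordinates/Hölder computation, and your argument should record the same transfer, since it is the hypothesis under which the IMS seminorm estimate applies to $u'$ rather than to $u$.
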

	\begin{proof}
		Observe that 
		\begin{align}\label{hold1}
		\begin{split}
		\text{Tail}(u';x,R_0)^{p-1}&=R_0^{ps}\int_{B_{R_0}(x)^c}\frac{|u'(y)|^{p-1}}{|x-y|^{N+sp}}dy\\
		&\leq CR_0^{ps}\left(\int_{B_{2R_0}(x_0)\setminus B_{R_0(x_0)}}\frac{\|u'\|_{L^{\infty}(B_{2R_0}(x_0))}^{p-1}}{|x-y|^{N+ps}}dy\right.\\
		&\left.+\int_{B_{2R_0(x_0)^c}}\frac{|u'(y)|^{p-1}}{|x-y|^{N+ps}}dy\right)\\
		&\leq C\left(\|u'\|_{L^{\infty}(B_{2R_0}(x_0))}^{p-1}+R_0^{ps}\int_{B_{2R_0(x_0)^c}}\frac{|u'(y)|^{p-1}}{|x-y|^{N+ps}}dy\right)\\
		&=CQ(u';x_0,2R_0).
		\end{split}
		\end{align}
		Here $K$ is the bound of $(-\Delta_p)^su'$ in $B_{2R_0}(x_0)$ (refer Appendix). Thus we obtained $Q(u';x_0,R_0)\leq CQ(u';x_0,2R_0)$ which implies the following H\"{o}lder seminorm estimate.
		\begin{align}\label{estimate1}
		\begin{split}
		[u']_{C^{\alpha}(B_{R_0}(x_0))}&\leq C[(KR_0^{ps})^{1/(p-1)}+Q(u';x_0,2R_0)]R_0^{-\alpha}.
		\end{split}
		\end{align}
		We assume that $\alpha\in(0,s]$.
		Let $\Omega'\Subset\Omega$. Then we have through the compactness of $\Omega'$ and the the estimate (\ref{estimate1}) that $\|u'\|_{C^{\alpha}(\overline{\Omega'})}\leq C$.\\
		Let $\Pi:V\rightarrow \partial\Omega$ be a metric projection map defined as $\Pi(x)=\text{Argmin}_{y\in\partial\Omega^c}\{|x-y|\}$ where $V=\{x\in\overline{\Omega}:d(x,\partial\Omega)\leq \rho\}$. By (\ref{estimate1}) we have 
		\begin{align}\label{estimate2}
		\begin{split}
		[u']_{C^{\alpha}(B_{r/2}(x))}&\leq C[(Kr^{ps})^{1/(p-1)}+\|u'\|_{L^{\infty}(B_r(x))}+\text{Tail}(u';x,r)]r^{-\alpha}.
		\end{split}
		\end{align}
		We now try to control the growth of the terms on the right hand side of (\ref{estimate2}). The first term is trivially controlled since $\alpha\leq s\leq \frac{sp}{p-1}$. The other terms are controlled uniformly due to the compactness of the set $V$.
	\end{proof}

	\section{Appendix}
	
	\noindent We now prove the following two Lemmas with the help of Lemma \ref{gateaux estimate refer}, to establish the G\^{a}teaux differentiability of the functional $I_\lambda\colon X_0\rightarrow\mathbb{R}$, for $0<\gamma<1$.
	\begin{lemma}\label{gateaux estimate refer}
		For every $0<\gamma<1$, there exists $C_\gamma>0$, depending on $\gamma$, such that the following inequality holds true
		\begin{equation}\label{apend 3}
		\int_{0}^{1}|a+tb|^{-\gamma}dt\leq C_\gamma\left(\underset{t\in[0, 1]}{\max}~|a+tb| \right)^{-\gamma}
		\end{equation}
	\end{lemma}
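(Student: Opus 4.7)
The plan is to reduce to a normalized configuration and then evaluate the integral explicitly, exploiting the fact that $t\mapsto a+tb$ is linear. Let $M=\max_{t\in[0,1]}|a+tb|$. Since both sides of the asserted inequality are positively homogeneous of degree $-\gamma$ in the pair $(a,b)$, I may rescale and assume $M=1$; the degenerate case $M=0$ (equivalent to $a=b=0$) is vacuous. After this reduction it suffices to produce an absolute constant $C_\gamma$ such that $\int_0^1 |a+tb|^{-\gamma}\,dt\leq C_\gamma$.

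Next I would split into cases based on the linear function $\varphi(t)=a+tb$. If $b=0$, then $|\varphi|\equiv|a|=1$, so the integral equals $1$ and any $C_\gamma\geq 1$ works. If $b\neq 0$ and $\varphi$ has no zero in $[0,1]$, then $|\varphi|$ is positive and monotone, so by a direct antiderivative computation
\begin{equation*}
\int_0^1|a+tb|^{-\gamma}\,dt=\frac{|\varphi(1)|^{1-\gamma}-|\varphi(0)|^{1-\gamma}}{(1-\gamma)|b|}\cdot(\pm 1),
\end{equation*}
and using $|\varphi(0)|,|\varphi(1)|\leq 1$ this is bounded by $\frac{1}{1-\gamma}$ after a short elementary manipulation.

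The remaining case is $b\neq 0$ with a unique zero $t_0\in[0,1]$. Here $|\varphi(t)|=|b|\,|t-t_0|$ and the normalization $M=1$ forces $|b|=1/\max(t_0,1-t_0)$. Since $\gamma<1$, the integrand is integrable and a direct computation gives
\begin{equation*}
\int_0^1|\varphi(t)|^{-\gamma}\,dt=\frac{|b|^{-\gamma}}{1-\gamma}\bigl[t_0^{1-\gamma}+(1-t_0)^{1-\gamma}\bigr].
\end{equation*}
Bounding $t_0^{1-\gamma}+(1-t_0)^{1-\gamma}\leq 2\max(t_0,1-t_0)^{1-\gamma}$ and substituting the value of $|b|$ yields the clean bound $\frac{2}{1-\gamma}$. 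Combining the three cases, the choice $C_\gamma=\frac{2}{1-\gamma}$ works.

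There is no real obstacle here beyond bookkeeping. The only point that requires a small amount of care is the integrability of $|\varphi|^{-\gamma}$ near a zero of $\varphi$, which uses precisely the hypothesis $\gamma<1$, and the scaling step, which must be justified by checking that both sides transform the same way under $(a,b)\mapsto(\lambda a,\lambda b)$. The explicit constant $C_\gamma=2/(1-\gamma)$ then makes transparent the blow-up as $\gamma\uparrow 1$, which is consistent with how this lemma is later used to establish G\^ateaux differentiability of the singular term in $I_\lambda$.
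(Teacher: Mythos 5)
Your argument is correct, and it is genuinely a different route from the paper: the paper does not prove this lemma at all but simply cites Lemma A.1 of Tak\'{a}\v{c} \cite{takavc2002fredholm}, whereas you give a short, self-contained elementary proof with an explicit constant $C_\gamma=2/(1-\gamma)$. Your proof has the advantage of being transparent and of making the blow-up as $\gamma\uparrow 1$ visible, which the bare citation does not. The homogeneity reduction is valid since both sides scale by $\lambda^{-\gamma}$ under $(a,b)\mapsto(\lambda a,\lambda b)$, and the case analysis (constant, sign-definite, zero-crossing) is exhaustive for a linear $\varphi$. The only step you leave implicit is the ``short elementary manipulation'' in the sign-definite case: writing $a_0$ for the smaller of $|\varphi(0)|,|\varphi(1)|$ (so the larger is $1$ and $|b|=1-a_0$), you need $1-a_0^{1-\gamma}\leq 1-a_0$, which follows from $a_0^{1-\gamma}\geq a_0$ for $a_0\in[0,1]$ since $1-\gamma<1$; it would be worth stating this one line explicitly. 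Likewise, in the zero-crossing case, the step $t_0^{1-\gamma}+(1-t_0)^{1-\gamma}\leq 2\max(t_0,1-t_0)^{1-\gamma}$ and the cancellation $\max(t_0,1-t_0)^{\gamma}\cdot\max(t_0,1-t_0)^{1-\gamma}=\max(t_0,1-t_0)\leq 1$ are exactly right. Overall this is a clean direct proof that could replace the citation if self-containment were desired.
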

	\begin{proof}
		The proof of this can be found in Lemma A.1. of \cite{takavc2002fredholm}.	
	\end{proof}
	\begin{lemma}\label{differentiability}
		Let $0<\gamma<1,~1<p<\infty, ~p-1<q\leq p_s^{*}-1$ and $\phi_1$ be the first eigenvector of the fractional $p$-Laplacian operator. Suppose $u, v\in X_0$ with $u\geq\epsilon\phi_1$, for some $\epsilon>0$. Then we have
		\begin{align}\label{5.17}
		\begin{split}
		\underset{t\rightarrow0}{\lim}\frac{I_\lambda(u+tv)-I_\lambda(u)}{t}=\int_Q &\cfrac{|u(x)-u(y)|^{p-2}(u(x)-u(y))}{|x-y|^{N+ps}}(v(x)-v(y))dxdy\\&-\lambda\int_\Omega u^{-\gamma}vdx-\int_\Omega u^qvdx
		\end{split}
		\end{align}
	\end{lemma}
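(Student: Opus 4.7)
The plan is to split $I_\lambda(u+tv)-I_\lambda(u)$ into the three pieces corresponding to the three summands in the functional and analyze each difference quotient separately. The Gagliardo seminorm term $u\mapsto \frac{1}{p}\|u\|^p$ is of class $C^1$ on $X_0$ with derivative equal to the first integral on the right of \eqref{5.17}, by the classical monotonicity estimates for the map $\xi\mapsto |\xi|^{p-2}\xi$. The superlinear term $u\mapsto \frac{1}{q+1}\int_\Omega (u^+)^{q+1}dx$ is likewise $C^1$ on $X_0$ since $q+1\leq p_s^*$ and $X_0\hookrightarrow L^{q+1}(\Omega)$; its derivative produces $\int_\Omega u^q v\,dx$ because $u\geq\epsilon\phi_1>0$ makes $u^+=u$. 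These two pieces immediately account for two of the three terms in \eqref{5.17}, so the work concentrates on the singular summand $-\tfrac{\lambda}{1-\gamma}\int_\Omega (u^+)^{1-\gamma}\,dx$.

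For the singular piece the hypothesis $u\geq\epsilon\phi_1>0$ is essential. Since $u(x)>0$ a.e., for all sufficiently small $|t|$ (depending on $x$) we have $(u+tv)(x)>0$, and the fundamental theorem of calculus gives
\begin{equation*}
\frac{(u+tv)^{1-\gamma}-u^{1-\gamma}}{t(1-\gamma)}=v\int_0^1(u+stv)^{-\gamma}\,ds,
\end{equation*}
which converges pointwise a.e.\ to $u^{-\gamma}v$ as $t\to 0$. The plan is to pass this limit under the integral via Lebesgue's dominated convergence theorem.

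The key ingredient is producing an $L^1$ dominating function. Lemma \ref{gateaux estimate refer} applied with $a=u(x)$, $b=tv(x)$ yields
\begin{equation*}
\left|\int_0^1(u+stv)^{-\gamma}ds\right|\leq C_\gamma\Bigl(\max_{s\in[0,1]}|u+stv|\Bigr)^{-\gamma}\leq C_\gamma u^{-\gamma}\leq C_\gamma\epsilon^{-\gamma}\phi_1^{-\gamma},
\end{equation*}
where the second inequality uses that the maximum at $s=0$ is at least $u$. Combining the boundary behavior $\phi_1\asymp d(\cdot,\partial\Omega)^s$ with the fractional Hardy inequality $\|d^{-s}w\|_{L^p(\Omega)}\leq C\|w\|_{X_0}$ and H\"older's inequality with conjugate exponents $p/\gamma,\;p/(p-\gamma)$, one obtains
\begin{equation*}
\int_\Omega\phi_1^{-\gamma}|v|\,dx\leq C\|d^{-s}v\|_{L^p}^{\gamma}\,\|v\|_{L^{(1-\gamma)p/(p-\gamma)}}^{1-\gamma}<\infty,
\end{equation*}
the last factor being finite because $(1-\gamma)p/(p-\gamma)<p$ and $X_0\hookrightarrow L^p(\Omega)$. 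Dominated convergence then yields the desired limit, and summing the three contributions proves \eqref{5.17}. The main obstacle is precisely this integrability step: without both the lower bound $u\geq\epsilon\phi_1$ and a Hardy-type control on $v$, the dominating function $\phi_1^{-\gamma}|v|$ would not be integrable uniformly in $t$, and the exchange of limit and integral would fail.
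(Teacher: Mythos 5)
Your proposal follows essentially the same route as the paper: the same three-way decomposition of $I_\lambda$, the same use of the fundamental theorem of calculus to write the singular difference quotient as $v\int_0^1(u+stv)^{-\gamma}ds$, the same appeal to Lemma \ref{gateaux estimate refer} to obtain the $t$-uniform majorant $C_\gamma\epsilon^{-\gamma}\phi_1^{-\gamma}|v|$, and the same invocation of the fractional Hardy inequality to place this majorant in $L^1(\Omega)$ before applying dominated convergence. The only difference is cosmetic: you make explicit the H\"older-exponent computation behind the Hardy step (which the paper leaves to a one-line remark), and you spell out the boundary asymptotics $\phi_1\asymp d(\cdot,\partial\Omega)^s$ that the paper uses tacitly.
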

	\begin{proof}
		In order to estimate \eqref{5.17}, it is enough to prove the convergence of the singular term $\int_\Omega u^{-\gamma}vdx$. Let $v\in X_0$ and $t>0$ be sufficiently small. Then we have
		\begin{align}
		\begin{split}
		0\leq\frac{I_\lambda(u+tv)-I_\lambda(u)}{t}=&\frac{1}{p}\left(\frac{\|u+tv\|^p-\|u\|^p}{t}\right)-\lambda\left(\frac{F(u+tv)-F(u)}{t}\right)\\
		&-\frac{1}{q+1}\int_\Omega\left(\frac{|u+tv|^{q+1}-|u|^{q+1}}{t}\right)
		\end{split}
		\end{align}
		where, $$F(u)=\frac{1}{1-\gamma}\int_\Omega(u^{+})^{1-\gamma}dx, ~~\text{for all}~x\in X_0.$$
		We see that as $t\rightarrow0^+$, we get
		\begin{itemize}
			\item[$(a)$] $\frac{\|u+tv\|^p-\|u\|^p}{t}\longrightarrow p\int_Q \cfrac{|u(x)-u(y)|^{p-2}(u(x)-u(y))}{|x-y|^{N+ps}}(v(x)-v(y))dxdy$
			\item[$(b)$] $\frac{1}{q+1}\int_\Omega\left(\frac{|u+tv|^{q+1}-|u|^{q+1}}{t}\right)dx\longrightarrow\int_\Omega |u|^qvdx$
		\end{itemize}
		We now define for $z\in\mathbb{R}\setminus\{0\}$,
		\begin{align}
		V(x)&=\frac{1}{1-\gamma}\frac{d}{dz}(z^+)^{1-\gamma}\nonumber\\&=
		\begin{cases}
		z^{-\gamma},~\text{if}~z>0\\ 0,~\text{if}~z<0
		\end{cases}
		\end{align}
		Therefore, for every $x\in\Omega$
		\begin{equation}\label{dominated term}
		\frac{F(u+tv)-F(u)}{t}=\int_\Omega\left(\int_{0}^{1}V(u+stv)ds\right)vdx
		\end{equation}
		Hence we get for all $x\in\Omega$, $u(x)>0$, and
		$$\underset{t\rightarrow0^+}{\lim}\int_{0}^{1}V(u(x)+stv(x)ds=V(u(x))=u(x)^{-\gamma}$$
		Also we have
		$$\left|\int_{0}^{1}V(u(x)+stv(x)ds\right|\leq\int_{0}^{1}\left|u(x)+stv(x)\right|ds$$
		Now using the estimate in the previous Lemma \ref{apend 3}, we get
		\begin{align*}
		\left|\int_{0}^{1}V(u(x)+stv(x)ds\right|&\leq C_\gamma\left(\underset{s\in[0, 1]}{\max}~|u(x)+stv(x)| \right)^{-\gamma}\\
		&\leq C_\gamma u(x)^{-\gamma}\\
		&\leq C_\gamma (\epsilon\phi_1(x))^{-\gamma}\\
		&=C_{\epsilon, \gamma}\phi_1(x)^{-\gamma}
		\end{align*}
		where, the constant $C_{\epsilon, \gamma}>0$ is independent of $x\in\Omega.$ Therefore, by the Hardy's inequality and for all $v\in X_0$, we have $v\phi_1^{-\gamma}\in L^1(\Omega).$ Hence the Lemma follows by applying Lesbegue dominated convergence theorem in \eqref{dominated term} and taking the limit as $t\rightarrow0^+$. In fact we have the following Corollary to the Lemma \ref{differentiability}.	
	\end{proof}
	\begin{corollary}
		Let $0<\gamma<1,~1<p<\infty, ~p-1<q\leq p_s^{*}-1.$ If $u\in X_0$ is such that $u\geq\epsilon\phi_1$, for some $\epsilon>0$. Then the functional $I_\lambda\colon X_0\rightarrow\mathbb{R}$ is G\^{a}teaux differentiable at $u$. The G\^{a}teaux derivative $I_\lambda(u)$ at $u$ is given by
		\begin{align}
		\begin{split}
		\langle I_\lambda(u), v\rangle=\int_Q &\cfrac{|u(x)-u(y)|^{p-2}(u(x)-u(y))}{|x-y|^{N+ps}}(v(x)-v(y))dxdy\\
		&-\lambda\int_\Omega u^{-\gamma}vdx-\int_\Omega u^qvdx,~\text{for all}~v\in X_0.
		\end{split}
		\end{align}
	\end{corollary}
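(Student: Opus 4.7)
The plan is to deduce this corollary almost entirely from Lemma~\ref{differentiability}. First I would observe that, for each fixed $v\in X_0$, Lemma~\ref{differentiability} already establishes the existence of the one-sided limit $\lim_{t\to 0^{+}}(I_\lambda(u+tv)-I_\lambda(u))/t$ with precisely the expression on the right-hand side of the displayed formula in the corollary. Since the right-hand side of that formula is odd in $v$, substituting $-v$ in place of $v$ and changing the sign yields the matching left-hand limit $\lim_{t\to 0^{-}}(I_\lambda(u+tv)-I_\lambda(u))/t$, so the two-sided directional derivative of $I_\lambda$ at $u$ in the direction $v$ exists and equals the stated expression.

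To promote this pointwise directional derivative to a Gâteaux derivative, I would then verify that the map $v\mapsto\langle I_\lambda'(u),v\rangle$ defined by the formula is a continuous linear functional on $X_0$. Linearity in $v$ is immediate from the linearity of each of the three integrals appearing in the formula. For continuity I would estimate separately: the nonlocal term is controlled by $\|u\|^{p-1}\|v\|$ via Hölder's inequality on $Q$; the power term $\int_\Omega u^q v\,dx$ is controlled by Hölder together with the Sobolev embedding $X_0\hookrightarrow L^{q+1}(\Omega)$ from Lemma~\ref{embedding}, recalling $q+1\le p_s^{*}$; finally the singular term $\int_\Omega u^{-\gamma}v\,dx$ is handled by using the hypothesis $u\ge\epsilon\phi_1$ to dominate $u^{-\gamma}\le\epsilon^{-\gamma}\phi_1^{-\gamma}$ and then invoking the Hardy-type inequality (as already used at the end of the proof of Lemma~\ref{differentiability}) to bound $\int_\Omega\phi_1^{-\gamma}|v|\,dx$ by a multiple of $\|v\|$.

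The only genuine obstacle is the control of the singular integral, which is precisely where the lower bound $u\ge\epsilon\phi_1$ is indispensable: without it, $u^{-\gamma}$ could blow up uncontrollably near $\partial\Omega$ and the linear functional would fail to extend continuously to all of $X_0$. All the remaining content of the corollary is bookkeeping on top of Lemma~\ref{differentiability}, so the proof should be short.
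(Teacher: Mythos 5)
Your proposal is correct and follows essentially the route the paper intends: the paper gives no separate argument for this corollary beyond the closing sentence of the proof of Lemma~\ref{differentiability} ("In fact we have the following Corollary..."), so the corollary is implicitly left as a direct consequence of that lemma, which is exactly how you treat it. Your two additions are the right ones to make this rigorous. First, the oddness observation is genuinely needed: the proof of Lemma~\ref{differentiability} works only with $t>0$ (it explicitly begins "Let $v\in X_0$ and $t>0$ be sufficiently small"), so despite the lemma's statement writing $\lim_{t\to 0}$, only the one-sided limit is actually established there, and replacing $v$ by $-v$ and using linearity of the right-hand side in $v$ is the clean way to recover the two-sided directional derivative. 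Second, promoting a directional derivative to a Gâteaux derivative does require checking that $v\mapsto\langle I_\lambda'(u),v\rangle$ is a bounded linear functional on $X_0$; your three estimates (Hölder on $Q$ giving $\|u\|^{p-1}\|v\|$ for the nonlocal term, Hölder plus the embedding $X_0\hookrightarrow L^{q+1}(\Omega)$ for the power term, and the lower bound $u\geq\epsilon\phi_1$ combined with the fractional Hardy inequality for the singular term) are exactly what is needed and are all correct. You are also right that the hypothesis $u\geq\epsilon\phi_1$ is the crux: it is what makes the singular term a continuous functional of $v$, and it is the same mechanism the paper already invokes inside Lemma~\ref{differentiability} to justify the dominated convergence step.
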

	
	\begin{lemma}\label{C1}
		Let $0<\gamma<1,~1<p<\infty, ~p-1<q\leq p_s^{*}-1.$ Let $w\in X_0$ is such that $w\geq\epsilon\phi_1$, for some $\epsilon>0$. For each $x\in\Omega$, we consider
		$$f_\lambda(x, s)=\begin{cases}
		\lambda w(x)^{-\gamma}+w(x)^q,~~\text{if}~s<w(x)\\
		\lambda s^{-\gamma}+s^q,~~\text{if}~s\geq w(x)
		\end{cases}$$ with $F_\lambda(x, s)=\int_{0}^{s}f_\lambda(x, t)dt.$ For each $u\in X_0$ we define 
		$$\bar{I}_\lambda(u)=\frac{1}{p}\int_Q \cfrac{|u(x)-u(y)|^{p}}{|x-y|^{N+ps}}dxdy-\int_\Omega F_\lambda(x, u)dx.$$ Then the energy functional $\bar{I}_\lambda$ belongs to $C^1(X_0, \mathbb{R}).$
	\end{lemma}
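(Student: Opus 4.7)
The plan is to verify two things: (i) that $\bar I_\lambda$ is G\^ateaux differentiable with derivative given by the natural formula, and (ii) that this derivative is continuous from $X_0$ into $X_0^*$. The kinetic term $u\mapsto \tfrac{1}{p}\|u\|^p$ is well understood: its Fr\'echet derivative at $u$ is the bounded linear functional $v\mapsto \int_Q |u(x)-u(y)|^{p-2}(u(x)-u(y))(v(x)-v(y))|x-y|^{-N-ps}\,dxdy$, and this map is continuous from $X_0$ to $X_0^*$ by standard estimates for the fractional $p$-Laplacian. Consequently, the work concerns only the potential term $\Psi(u)=\int_\Omega F_\lambda(x,u)\,dx$.

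First I would record the essential growth bound. Since $w\geq\epsilon\phi_1$ and $f_\lambda(x,\cdot)$ is continuous on $\mathbb{R}$ (the left and right limits at $s=w(x)$ both equal $\lambda w^{-\gamma}+w^q$), one checks directly that
\[
|f_\lambda(x,s)|\leq A(x)+|s|^q,\qquad |F_\lambda(x,s)|\leq A(x)|s|+\tfrac{|s|^{q+1}}{q+1},
\]
where $A(x):=\lambda w(x)^{-\gamma}+w(x)^q$. The bound $w\geq\epsilon\phi_1$ together with the Hardy-type estimate used in the proof of Lemma \ref{differentiability} gives $\int_\Omega w^{-\gamma}|\psi|\,dx\leq C_\epsilon\|\psi\|_{X_0}$ for every $\psi\in X_0$, so $\Psi$ is well defined on $X_0$.

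Next, G\^ateaux differentiability at a fixed $u\in X_0$ along a direction $v\in X_0$ follows by passing to the limit under the integral sign in
\[
\frac{F_\lambda(x,u+tv)-F_\lambda(x,u)}{t}=\int_0^1 f_\lambda(x,u+stv)\,v\,ds.
\]
The pointwise limit as $t\to 0$ is $f_\lambda(x,u)v$, and the integrand is dominated by $(A(x)+|u|^q+|v|^q)|v|$, which is integrable by the preceding step combined with the Sobolev embedding $X_0\hookrightarrow L^{q+1}(\Omega)$; the dominated convergence theorem yields
\[
\langle \bar I_\lambda'(u),v\rangle=\int_Q \frac{|u(x)-u(y)|^{p-2}(u(x)-u(y))(v(x)-v(y))}{|x-y|^{N+ps}}\,dxdy-\int_\Omega f_\lambda(x,u)v\,dx.
\]

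Finally, and this is the step I expect to carry the real technical weight, I would prove continuity of $\bar I_\lambda'$. Let $u_n\to u$ in $X_0$ and extract a subsequence with $u_n\to u$ a.e.\ in $\Omega$ and in $L^{q+1}(\Omega)$. Since $f_\lambda(x,\cdot)$ is continuous, $f_\lambda(x,u_n)\to f_\lambda(x,u)$ a.e.\ in $\Omega$. To upgrade this pointwise convergence to $\sup_{\|v\|_{X_0}\leq 1}\bigl|\int_\Omega [f_\lambda(x,u_n)-f_\lambda(x,u)]\,v\,dx\bigr|\to 0$, I would split using the dominant $A(x)+|u_n|^q+|u|^q$: the power part is handled by the $L^{(q+1)/q}$-convergence of $|u_n|^q-|u|^q$ together with H\"older's inequality, while the $A(x)$ part is controlled uniformly in $v$ on the unit ball of $X_0$ via equi-integrability, reducing the matter to the Vitali convergence theorem. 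The main obstacle is exactly this uniform control near $\partial\Omega$, where $A(x)\sim\delta(x)^{-\gamma s}$ is unbounded; it is overcome by the Hardy inequality for $X_0$, which furnishes the equi-integrability of $A(x)|v|$ uniformly for $v$ in the unit ball of $X_0$ and thereby closes the argument.
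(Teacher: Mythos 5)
Your proposal follows essentially the same route as the paper: isolate the nonlinearity's singular contribution (which is frozen at $w^{-\gamma}$ below the cut-off $w$), establish the Gâteaux derivative by passing a parameter derivative under the integral sign with a dominating function, and control the singular part near $\partial\Omega$ via the Hardy inequality for $X_0$ together with the lower bound $w\geq\epsilon\phi_1\sim\delta^s$. The one place where your treatment genuinely improves on the paper's is the continuity of $\bar I_\lambda'$: the paper's argument only verifies $\langle J'(u_n)-J'(u),v\rangle\to 0$ for each \emph{fixed} $v$ via dominated convergence (i.e.\ norm-to-weak$^*$ continuity), which is not the uniform estimate over the unit ball that $C^1(X_0,\mathbb R)$ demands. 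Your Vitali/equi-integrability sketch aims at exactly this uniformity, and it can be made to close cleanly: writing the singular difference $h_n=(\max\{u_n,w\})^{-\gamma}-(\max\{u,w\})^{-\gamma}$ and pairing $h_n\delta^s$ against $v/\delta^s$ with H\"older and the $L^p$ Hardy inequality gives $\|J'(u_n)-J'(u)\|_{X_0^*}\lesssim\|h_n\delta^s\|_{L^{p'}(\Omega)}$, and the latter goes to $0$ by dominated convergence since $|h_n\delta^s|\lesssim\delta^{s(1-\gamma)}\in L^\infty(\Omega)$. One caveat you should make explicit is that this relies on the fractional Hardy inequality for $W^{s,p}_0(\Omega)$, which needs the appropriate range of $s,p$ (e.g.\ $sp\neq 1$); the paper quietly uses the same ingredient. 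Your growth bound $|F_\lambda(x,s)|\leq A(x)|s|+|s|^{q+1}/(q+1)$ and the $L^{(q+1)/q}$ treatment of the power part are correct and match standard Nemytskii estimates.
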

	\begin{proof}
		To establish the result we emphasize only on the singular term. Let 
		$$g(x,s)=\begin{cases}
		w(x)^{-\gamma},~\text{if}~s<w(x)\\
		s^{-\gamma},~\text{if}~s\geq w(x)
		\end{cases}$$ where, $w\in X_0$ such that $w\geq\epsilon\phi_1.$ Let us define $G(x,s)=\int_{0}^{s}g(x,t)dt$ and $J(u)=\int_\Omega G(x,u)dx$. Proceeding with the arguments as in Lemma \ref{differentiability}, we get $J(u)$ has a G\^{a}teaux derivative $J'(u)$ for all $u\in X_0$ and it is given by
		$$\langle J'(u), v\rangle=\int_\Omega\left(\max\{u(x), w(x)\}\right)^{-\gamma}v(x)dx.$$
		Now, let $u_n\in X_0$ be such that $u_n\rightarrow u$. Then we have, for all $v\in X_0$
		\begin{align*}
		\left|\langle J'(u_n)-J'(u), v\rangle\right|&=\left|\int_\Omega\left[\left(\max\{u_n(x), w(x)\}\right)^{-\gamma}-\left(\max\{u(x), w(x)\}\right)^{-\gamma}\right]v(x)dx\right|\\
		&\leq2\int_\Omega w^{-\gamma}|v|dx\\
		&\leq2\epsilon^{-\gamma}\int_\Omega\phi_1^{-\gamma}|v|dx.
		\end{align*}
		Now as in Lemma \ref{differentiability}, using the Hardy's inequality we conclude that $\phi_1^{-\gamma}v\in L^1(\Omega)$. Hence by Lesbegue dominated convergence theorem we conclude that the G\^{a}teaux derivative of $J$ is continuous which guaranties that $J\in C^1(X_0, \mathbb{R})$.
	\end{proof}
	
	\section*{Acknowledgement}
	The author S. Ghosh, thanks the Council of Scientific and Industrial Research (C.S.I.R), India, for the financial assistantship received to carry out this research work. Both the authors thanks the research facilities received from the Department of Mathematics, National Institute of Technology Rourkela, India.
	
	\bibliographystyle{plain}

\end{document}